\documentclass[a4paper,reqno]{amsart}

\usepackage{amsmath,amsthm, amssymb}
\usepackage{mathrsfs}
\usepackage[shortlabels]{enumitem}
\usepackage{graphicx}
\usepackage[font=small]{caption}
\usepackage{xcolor}
\usepackage{url}

\newcommand{\N}{\mathbb{N}}
\newcommand{\R}{{\mathbb{R}}}
\newcommand{\C}{{\mathbb{C}}}

\newcommand{\dd}{{{\rm d}}}

\newcommand{\odd}{\overline{\dd}}

\newcommand{\ie}{{\emph{i.e.}}}
\newcommand{\eg}{{\emph{e.g.}}}

\newcommand{\la}{\lambda}

\newcommand{\eps}{\varepsilon}

\newcommand{\essinf}{\operatorname*{ess \,inf}}

\newcommand{\Dom}{{\operatorname{Dom}}}
\newcommand{\Ker}{{\operatorname{Ker}}}

\renewcommand{\Re}{\operatorname{Re}}
\renewcommand{\Im}{\operatorname{Im}}
\newcommand{\dist}{\operatorname{dist}}

\newcommand{\supp}{\operatorname{supp}}

\newcommand{\Num}{\operatorname{Num}}

\newcommand{\loc}{\mathrm{loc}}
\newcommand{\BigO}{\mathcal{O}}

\newcommand{\rad}{{\operatorname{rad}}}
%
%

\newcommand{\opH}{H}

\newcommand{\opA}{A}
\newcommand{\opB}{B}

\newcommand{\opG}{G}
\newcommand{\opI}{I}

\newcommand{\opK}{K}
\newcommand{\opP}{P}
\newcommand{\opQ}{Q}
\newcommand{\opR}{R}
\newcommand{\opS}{S}
\newcommand{\opT}{T}
\newcommand{\opU}{U}

\newcommand{\opOP}{\operatorname{OP}}
\newcommand{\Core}{{\mathcal{C}}}
\newcommand{\Dcore}{{\mathcal{D}}}

\newcommand{\Rplus}{\R_+}
\newcommand{\Rminus}{\R_-}

\newcommand{\CiR}{{C^{\infty}(\R)}}
\newcommand{\SchwR}{{\sS(\R)}}

\newcommand{\intR}{\int_{\R}}

\newcommand{\Rd}{\mathbb{R}^d}

\newcommand{\Lt}{{L^2}}

\newcommand{\LiR}{{L^{\infty}(\R)}}

\newcommand{\LiOm}{{L^{\infty}(\Omega)}}

\newcommand{\LilocR}{{L^{\infty}_{\rm loc}(\R})}

\newcommand{\CcR}{{C_c^{\infty}(\R)}}
\newcommand{\CcOm}{{C_c^{\infty}(\Omega)}}

\newcommand{\WotR}{{W^{1,2}(\R)}}

\newcommand{\WttR}{{W^{2,2}(\R)}}

\newcommand{\Dt}{-\partial_x^2}

\newcommand{\Dtp}{\partial_x^2}
\newcommand{\Nt}{-\partial_x}

\newcommand{\Ntp}{\partial_x}
\newcommand{\Dtime}{\partial_t^2}
\newcommand{\Ntime}{\partial_t}

\newcommand{\ls}{\lesssim 	}
\newcommand{\gs}{\gtrsim}

\theoremstyle{plain}

\newtheorem{theorem}{Theorem}[section]
\newtheorem{lemma}[theorem]{Lemma}

\newtheorem{proposition}[theorem]{Proposition}
\newtheorem{corollary}[theorem]{Corollary}

\theoremstyle{definition}
\newtheorem{example}[theorem]{Example}
\newtheorem{remark}[theorem]{Remark}
\newtheorem{asm-sec}[theorem]{Assumption}

\newcommand\cB{\mathcal B}
\newcommand\cD{\mathcal D}

\newcommand\cH{\mathcal H}

\newcommand\cS{\mathcal S}

\newcommand\cW{\mathcal W}

\newcommand\sF{\mathscr F}
\newcommand\sL{\mathscr L}
\newcommand\sS{\mathscr S}


\usepackage{mathtools}
\mathtoolsset{showonlyrefs}

\numberwithin{equation}{section}
\numberwithin{figure}{section}

\begin{document}
\title[Resolvent estimates for the damped wave equation]{Resolvent estimates for the one-dimensional damped wave equation with unbounded damping}

\author{Antonio Arnal}

\address{Mathematical Sciences Research Centre, Queen's University Belfast, University Road, Belfast BT7 1NN, UK}

\email{aarnalperez01@qub.ac.uk}

\thanks{The author should like to express his gratitude to his supervisor, P. Siegl, Graz University of Technology and Queen's University Belfast, for very valuable comments and suggestions in the course of the work presented in this paper, and to B. Gerhat, Czech Technical University in Prague, for introducing the framework described in \cite{gerhat2022schur} to him.}

\subjclass[2010]{35L05, 35P05, 34L40, 47A10}

\keywords{damped wave equation, unbounded damping, resolvent operator, resolvent bounds, pseudospectrum, Fourier transform}

\date{\today}

\begin{abstract}
	We study the generator $\opG$ of the one-dimensional damped wave equation with unbounded damping. We show that the norm of the corresponding resolvent operator, $\| (\opG - \la)^{-1} \|$, is approximately constant as $|\la| \to +\infty$ on vertical strips of bounded width contained in the closure of the left-hand side complex semi-plane, $\overline\C_{-} := \{\la \in \C: \Re\la \le 0\}$. Our proof rests on a precise asymptotic analysis of the norm of the inverse of $\opT(\la)$, the quadratic operator associated with $\opG$.
\end{abstract}

\maketitle

\section{Introduction}
\label{sec:intro}
There is a well-developed theory for self-adjoint linear operators based on a number of key tools, notably the spectral theorem. This fundamental result underlies the fact that the spectrum of a self-adjoint operator contains a significant amount of information which is of great value to understand its action. It is equally well-known that there is no equivalent result for non-self-adjoint (NSA) linear operators. This deficiency is related to the spectral instability under small perturbations often exhibited by NSA operators. Such behaviour has prompted the development of new tools and techniques to study them, the pseudospectrum being one of the most widely used (see e.g. \cite{Davies-2000-43, Davies-2002-34, Trefethen-1997-39}). We recall that, if $\opH$ is a closed linear operator acting in a Hilbert space $\cH$ and we take $\eps > 0$, the $\eps$-pseudospectrum of $\opH$ is defined as
\begin{equation*}
	\sigma_{\eps}(\opH) := \sigma(\opH) \cup \{\la \in \C : \| (\opH - \la)^{-1} \| > \eps^{-1}\}.
\end{equation*}
It is immediate from its definition that $\sigma_{\eps}(\opH)$ is a family of nested open sets which increase as $\eps \to +\infty$ and approach $\sigma(\opH)$ as $\eps \to 0$. These sets can also be characterised as follows
\begin{equation*}
	\sigma_{\eps}(\opH) = \{\la \in \C : \la \in \sigma(\opH + \opA) \text{ for some } \| \opA \| < \eps\}
\end{equation*}
(see \cite[Thm.~13.2]{Helffer-2013-book}), which makes apparent why one can expect the pseudospectrum to be more robust under linear perturbations than the spectrum.

In this context, it also becomes clear that the spectral analysis of any NSA operator must include at least some quantitative understanding of the behaviour of the norm of the resolvent operator $\| (\opH - \la)^{-1} \|$ for $\la$ in the resolvent set $\rho(\opH)$. Using general operator-theoretic arguments, it is possible to show that, if $\opH$ is a closed operator whose numerical range, $\Num(\opH)$, satisfies that each connected component of $\C \setminus \overline{\Num(\opH)}$ has non-empty intersection with $\rho(\opH)$, then
\begin{equation*}
	\| (\opH - \la)^{-1} \| \le \frac1{\dist(\la, \overline{\Num(\opH)})}, \quad \la \in \rho(\opH),
\end{equation*}
(see \cite[Thm.~III.2.3]{EE}). This estimate has the weakness that it provides very limited information about the behaviour of $\| (\opH - \la)^{-1} \|$ when $\la$ lies near the boundary of the numerical range and none at all when it is inside. An aim of recent research in this area has been to shed light on such behaviour, using both semi-classical (e.g. \cite{Davies-1999-200, Dencker-2004-57, Sjoestrand-2009, BordeauxMontrieux-2013}) and non-semi-classical (e.g. \cite{Krejcirik-2019-276, KREJCIRIK2022109440, Arifoski-2020-52, duc2022pseudomodes, ArSi-resolvent-2022}) methods. One approach, pioneered in \cite{Davies-1999-200} and subsequently developed non-semi-classically in \cite{Krejcirik-2019-276, Arifoski-2020-52, KREJCIRIK2022109440, duc2022pseudomodes}, relies on the construction of pseudomodes (or approximate eigenfunctions) for the operator at hand (Schr\"odinger, damped wave equation, Dirac, biharmonic) inside the numerical range thereby finding lower bounds on $\| (\opH - \la)^{-1} \|$. For Schr\"odinger operators with complex potentials, lower \textit{and} upper bounds have recently been found in \cite{ArSi-resolvent-2022} using different (non-semi-classical) methods.

The aim of the work presented in this paper is to apply the new techniques developed in \cite{ArSi-resolvent-2022} to the study of the resolvent of the NSA generator $\opG$ for the one-dimensional damped wave equation (DWE) described by
\begin{equation}
	\label{eq:dwe.2ndorder}
	\Dtime u(t,x) + 2 a(x) \Ntime u(t,x) = (\Dtp - q(x)) u(t,x), \quad t > 0, \quad x \in \R,
\end{equation}
with non-negative damping $a$ unbounded at infinity and non-negative potential $q$ which may also be unbounded. There is a great deal of research literature covering the case where $a$ is a bounded function on a (possibly unbounded) domain $\Omega$ in $\Rd$, $d \ge 1$, reflecting applications where the solution to the corresponding initial value problem decays exponentially with time. On the other hand, recent research (see \cite{sobajima2018diffusion, Ikehata-2018, Freitas-2018-264, Arifoski-2020-52}) has focused on the study of the equation when $a$ is unbounded at infinity and on the impact of this feature on the spectral structure of the generator and/or the large-time behaviour of solutions. In \cite{Ikehata-2018}, the existence and uniqueness of a weak solution to the initial value problem for \eqref{eq:dwe.2ndorder}, with some mild assumptions on the initial data, were proven for continuous damping bounded below by a positive constant in $\Rd$, $d \ge 3$. Furthermore, it was shown that both the solution and its energy decay polynomially with time. A similar result was presented in \cite{sobajima2018diffusion} for dampings of type $a(x) = a_0 |x|^{\alpha}$, with $a_0, \alpha > 0$, on exterior domains in $\Rd$, $d \ge 2$, although assuming more restrictive conditions on the initial data. In \cite{krejcirik2022spectrum}, the authors carry out an spectral analysis of the wave equation with distributional (Dirac $\delta$) damping on a non-compact star graph that highlights the \textit{wild spectral} behaviour associated with its generator's non-self-adjointness; upper and lower bounds for the resolvent norm are also found (see \cite[Thm.~2.3]{krejcirik2022spectrum}). The perspective and methods used in \cite{Freitas-2018-264} are closer to those applied in this paper, exploring as they do the impact of the behaviour at infinity of $a$ on the emergence of the essential spectrum of $\opG$ and the stability of solutions. A similar spectral and stability analysis was carried out in \cite{Freitas-2020-148} for dampings of type $a(x) = \alpha/x$, $\alpha > 0$, on $\Omega = (0,1) \subset \R$. The pseudospectrum of $\opG$ for a wide class of unbounded dampings $a(x), \; x \in \R,$ was studied in \cite{Arifoski-2020-52} using a pseudomode construction $\{\psi_{\la} \in \Dom(\opG) : \la \in \Gamma \subset \C\}$ of WKB type and estimates were obtained (as $\la \to \infty$, $\la \in \Gamma$) for the decay rate of
\begin{equation*}
	\frac{\| (\opG - \la) \psi_{\la} \|_{\cH}}{\| \psi_{\la} \|_{\cH}}
\end{equation*}
(with $\cH$ denoting the underlying Hilbert space where the operator $\opG$ acts) to yield a lower bound on $\| (\opG - \la)^{-1} \|$.

The main finding in this paper is formulated in Theorem~\ref{thm:resolvent.G} and concerns the asymptotic behaviour of $\| (\opG - \la)^{-1} \|$ in $\overline \C_{-}$. For dampings $a$ obeying Assumption~\ref{asm:a.q.dwe}, which encompasses smooth unbounded non-negative real functions with controlled derivatives (e.g. $a(x) = x^{2n}, \; n \in \N$, see Asm.~\ref{asm:a.q.dwe}~\ref{itm:a.symbolclass}), we prove that $\| (\opG - \la)^{-1} \|$ is approximately constant in any bounded-width vertical strip in $\overline \C_{-}$ as $|\Im \la| \to +\infty$. Our result supports smooth non-negative potentials $q$ with controlled derivatives (Asm.~\ref{asm:a.q.dwe}~\ref{itm:q.symbolclass}) as long as they are "no stronger" than $a$ at infinity (Asm.~\ref{asm:a.q.dwe}~\ref{itm:a.gt.q}). It encompasses cases where $\| (\opG - \la)^{-1} \|$ can be shown to diverge along any ray in the second (or third) quadrant of the complex plane using the methods in \cite{Arifoski-2020-52} (see Remark~\ref{rmk:rays.growth}). The key element in our proof of Theorem~\ref{thm:resolvent.G} is the derivation in Theorem~\ref{thm:resolvent.Tla} of an asymptotic estimate for the norm of the inverse of the quadratic operator
\begin{equation*}
	\opT(\la) = \Dt + q(x) + 2 \la a(x) + \la^2, \quad \la \in \C \setminus (-\infty,0].
\end{equation*}
Although we shall defer a more rigorous definition of this operator, along with an explanation of how it relates to $\opG$, until Sub-section~\ref{ssec:dwe.prelim}, we observe here that its structure is that of a $\la$-dependent Schr\"odinger operator with the complex potential $q(x) + 2 \la a(x) + \la^2$. Whereas the fact that $q$ is "no stronger" than $a$ discourages us from (for example) attempting to recast the problem as a (relatively bounded) perturbation of a self-adjoint operator, it does on the other hand broadly fit into the framework used to prove \cite[Thm.~4.2]{ArSi-resolvent-2022}, where the asymptotic behaviour of $\| (\opH - \la)^{-1} \|$ along the real axis for a one-dimensional Schr\"odinger operator with a complex potential $\opH$ was determined. In order to adapt to $\opT(\la)$ the strategy introduced in that paper, we begin by transforming the problem to Fourier space (see \eqref{eq:Tlahat.def}). The resulting pseudo-differential operator $\widehat \opT(\la) = \widehat{q} + 2 \la \widehat{a} + \xi^2 +\la^2$ has the potential term $\xi^2 + \la^2$ (for $\la := -c + i b$) with turning points $\pm \xi_b$ (where $\xi_b := b$). We subsequently carry out a separate analysis of $\| \widehat \opT(\la) u \|$ depending on whether or not $\supp u$ is contained in certain neighbourhoods of $\pm \xi_b$ designed so that $\xi^2$ is approximately constant inside. More specifically, the proof of Theorem~\ref{thm:resolvent.Tla} consists of the following steps (with $\la = -c + ib$, where $c \in K \subset \overline \Rplus$, $K$ bounded, and $b \in \R \setminus \{0\}$): 
\begin{enumerate}[\upshape (1),wide]
	\item In Proposition~\ref{prop:away.Tla}, with $\Omega'_{b,\pm}$ representing the neighbourhoods of $\pm\xi_b$ defined in \eqref{eq:deltab.def}, we use direct $\Lt$-norm estimates to find that as $b \to +\infty$
	\begin{equation*}
		b^2 \ls_\delta \inf \left\{ \frac{\| \widehat \opT(\la)  u \|}{\|u\|} : \; 0 \neq u \in \Dom(\widehat \opT(\la)), \; \supp u \cap (\Omega'_{b,+} \cup \Omega'_{b,-}) = \emptyset \right\}.
	\end{equation*}
	\item \label{itm:step2.intro} In Proposition~\ref{prop:local.Tla}, inside neighbourhoods $\Omega_{b,\pm}$ of $\pm\xi_b$ defined in \eqref{eq:Omega.def} and appropriately shifted, we Taylor-approximate $\widehat \opT(\la)$ with the (Fourier-space) pseudo-differential version of the generalised Airy operator, $\opA = \Nt + a(x)$, shifted by $c$ to yield as $b \to +\infty$
	\begin{equation*}
		\begin{aligned}
			&\| (\opA - c)^{-1} \|^{-1} 2 b (1 - \BigO_K(b^{-1}))\\
			&\qquad \qquad \le \inf \left\{ \frac{\| \widehat \opT(\la) u \|}{\|u\|} : \; 0 \neq u \in \Dom(\widehat \opT(\la)), \, \supp u \subset \Omega_{b,\pm} \right\}.
		\end{aligned}	
	\end{equation*}
	The norm resolvent convergence of (a localised realisation of) $\widehat \opT(\la)$ to the pseudo-differential version of $\opA - c$ follows from the second resolvent identity and several graph norm estimates obtained by standard arguments.
	\item In Proposition~\ref{prop:lbound.Tla}, we show that the estimate for $\| \widehat\opT(\la)^{-1} \|$ obtained in Step~\ref{itm:step2.intro} cannot be improved by finding functions $u_b \in \Dom(\widehat\opT(\la))$ such that as $b \to + \infty$
	\begin{equation*}
		\| \widehat\opT(\la) u_b \| = \| (\opA - c)^{-1} \|^{-1} 2 b (1 + \BigO_K(b^{-1})) \| u_b \|.
	\end{equation*}
	The proof relies on exploiting the localisation technique applied in Proposition~\ref{prop:local.Tla} and the fact that the operators involved have compact resolvent. Thus the norms of those resolvents can be obtained from the appropriate singular values and the corresponding eigenfunctions are used to determine the family $u_b$ with the aid of certain cut-off functions.
	\item In our final step, we combine the results from the previous ones with the aid of commutator estimates and a suitably constructed partition of unity.
\end{enumerate}

The remainder of our paper is structured as follows. Section~\ref{sec:prelim} describes our notation and recalls some fundamental facts for the DWE and for the various tools (e.g. generalised Airy operators) used throughout. Section~\ref{sec:assumptions} formulates our assumptions, states our main result for the generator $\opG$ and draws some consequences for the long-time behaviour of the associated $C_0$-semigroup that solves the corresponding Cauchy problem. Section~\ref{sec:quadop} is devoted to investigating a number of important properties for the quadratic operator $\opT(\la)$ associated with $\opG$, including a crucial result regarding the asymptotic behaviour of the norm of its inverse in bounded-width vertical strips inside $\overline\C_{-}$ which is subsequently extended to general curves adjacent to the imaginary axis (see Sub-section~\ref{ssec:resnorm.adj.dwe}). The proof of our main theorem can be found in Section~\ref{sec:mainthmG}. Section~\ref{sec:examples} illustrates our results with a detailed analysis of an example (with $a(x) = x^2$ and $q(x) = \kappa x^2$, $\kappa > 0$) where $\sigma(\opG)$ is calculated and the stability of the $C_0$-semigroup discussed.

\section{Notation and preliminaries}
\label{sec:prelim}
We write $\N_0:=\N \cup \{0\}$, $\R_{+} := (0, +\infty)$, $\R_{-} := (-\infty, 0)$, $\C_{+} := \{\la \in \C: \Re\la > 0\}$ and $\C_{-} := \{\la \in \C: \Re\la < 0\}$. The characteristic function of a set $E$ is denoted by $\chi_E$. We shall use $\CcR$ to represent the space of smooth functions of compact support and $\SchwR$ for the Schwartz space of smooth rapidly decreasing functions (with obvious adjustments for spaces in higher dimensions). The commutator of two operators $A$, $B$ is denoted by $[A,B]:=AB - BA$.

In the one-dimensional setting, we will refer  to the first and second order differential operators with $\Ntp$ and $\Dtp$, respectively, reserving the symbols $\nabla$ and $\Delta$ for statements in higher dimensions. When the relevant differentiation variable is time, we shall use $\Ntime$ and $\Dtime$ for the first and second order derivatives, respectively.

If $\cH$ denotes a Hilbert space, we shall use $\langle \cdot, \cdot \rangle_{\cH}$ and $\| \cdot \|_{\cH}$ to represent the inner product and norm on that space. The $L^2$ inner product shall be denoted by $\langle \cdot, \cdot \rangle_2$, or just by $\langle \cdot, \cdot \rangle$ if there is no ambiguity, and the $L^2$ norm by $\| \cdot \|_2$ or just by $\| \cdot \|$. The other $L^p$ norms will be represented by $\| \cdot \|_p$ with $L^{\infty}$ denoting the space of essentially bounded functions endowed with the essential $\sup$ norm $\| \cdot \|_{\infty}$.

Let $\emptyset \neq \Omega \subset \Rd$ be open, $k \in \N$ and $p \in [1, +\infty]$. We will denote the Sobolev spaces by $W^{k,p}(\Omega)$ and $W^{k,p}_0(\Omega)$ (the latter representing as usual the closure of $\CcOm$ in $W^{k,p}(\Omega)$, see \eg~\cite[Sub-sec.~V.3]{EE} for definitions). We shall generally be concerned with the particular cases where $\Omega = \R$, $k = 1 \text{ or } 2$ and $p = 2$.

If $\cB_1, \cB_2$ are two Banach spaces, $\sL(\cB_1, \cB_2)$ shall denote the (Banach) space of bounded linear operators from $\cB_1$ to $\cB_2$. As it is customary, if $\cB$ is a Banach space, $\sL(\cB)$ means $\sL(\cB, \cB)$. If the operator $\opT \in \sL(\cB)$, then $\rad(\opT)$ represents its spectral radius, i.e. $\rad(\opT) := \sup\{|z|: z \in \sigma(\opT)\}$ with $\sigma(\opT)$ denoting the spectrum of $\opT$. Unless otherwise stated, for a closed, densely defined linear operator $\opT$ on a Banach space $\cB$, we will use $\sigma_{e2}(\opT)$ to denote the essential spectrum of $\opT$ as determined using singular sequences (see e.g. \cite[Thm.~IX.1.3]{EE}), a closed subset of $\C$. As usual, $\sigma_{p}(\opT)$ will denote the set of eigenvalues of $\opT$ and $\rho(\opT)$ its resolvent set.

If $\opH$ and $\opH_1$ are two linear operators acting in the Hilbert space $\cH$, we say that $\opH_1$ is an \textit{extension} of $\opH$, and write $\opH_1 \supset \opH$, if $\Dom(\opH_1) \supset \Dom(\opH)$ and $\opH_1 u = \opH u$ for all $u \in \Dom(\opH)$. Note that our notation covers the case $\Dom(\opH_1) = \Dom(\opH)$, \ie~the extension does \textit{not} have to be proper.

If $\cH_1$ and $\cH_2$ represent two Hilbert spaces, we will denote by $\cH_1 \oplus \cH_2$ the product space endowed with the inner product
\begin{equation*}
	\langle u, v \rangle_{\cH_1 \oplus \cH_2} := \langle u_1, v_1 \rangle_{\cH_1} + \langle u_2, v_2 \rangle_{\cH_2}, \quad u_1, v_1 \in \cH_1, \quad u_2, v_2 \in \cH_2,
\end{equation*}
which is also Hilbert, and $\| \cdot \|_{\cH_1 \oplus \cH_2} := \langle \cdot, \cdot \rangle_{\cH_1 \oplus \cH_2}^{\frac12}$ will represent the associated norm.

To avoid introducing multiple constants whose exact value is inessential for our purposes, we write $a \lesssim b$ to indicate that, given $a,b \ge 0$, there exists a constant $C>0$, independent of any relevant variable or parameter, such that $a \le Cb$. The relation $a \gtrsim b$ is defined analogously whereas $a \approx b$ means that $a \lesssim b$ \textit{and} $a \gtrsim b$. When it becomes relevant to underlie the dependency of an implicit constant on one or more parameters, $p_1, p_2, \dots$, we will use the notation $\ls_{p_1, p_2, \dots}$, $\gs_{p_1, p_2, \dots}$ or $\approx_{p_1, p_2, \dots}$, as appropriate. We shall use $\BigO_{p_1, p_2, \dots}$ (big-O notation) with a similar meaning.

In the rest of this section, we summarise the key properties of the damped wave equation and the main tools relied upon in the paper.

\subsection{Fourier transform and pseudo-differential operators}
For $u \in \SchwR$, the Fourier and inverse Fourier transforms read (with $x,\xi \in \R$)
\begin{align*}
	\sF u(\xi) := \intR e^{-i \xi x} u(x) \odd x, \qquad \sF^{-1} u(x) := \intR e^{i x \xi} u(\xi) \odd \xi, \quad \odd \cdot : = \frac{\dd \cdot}{\sqrt{2\pi}};
\end{align*}
we also use $\hat{u} := \sF u \text{ and } \check{u} := \sF^{-1} u$, and retain the same notations to refer to the corresponding isometric extensions to $\Lt(\R)$.

We recall that the Schwartz space, $\SchwR$, is endowed with the family of semi-norms
\begin{equation*}
	|f|_{k,\sS} := \underset{\alpha + \beta \le k}{\max} \, \underset{x \in \R}{\sup} \, \langle x \rangle^{\alpha} \, |\partial^{\beta}_{x} f(x)|, \quad k \in \N_0.
\end{equation*}

Given $m \in \R$, the symbol class $\cS^m_{1,0}(\R \times \R)$ is the vector space of smooth functions $p: \R \times \R \rightarrow \C$ such that for any $\alpha, \; \beta \in \N_0$ there exists $C_{\alpha,\beta} > 0$ satisfying
\begin{equation}
	\label{eq:symbolclass}
	|\partial^{\alpha}_{\xi}\partial^{\beta}_{x} p(\xi, x)| \le C_{\alpha,\beta} \, \langle x \rangle^{m - \beta}, \quad (\xi,x) \in \R \times \R.
\end{equation}
This space is endowed with a natural family of semi-norms defined by
\begin{equation}
	\label{eq.symbolclass.seminorm}
	|p|_k^{(m)} := \underset{\alpha, \beta \le k}{\max} \, \underset{\xi, x \in \R}{\sup} \, \langle x \rangle^{-m+\beta} \, |\partial^{\alpha}_{\xi}\partial^{\beta}_{x} p(\xi, x)|, \quad k \in \N_0.
\end{equation}

We associate a pseudo-differential operator with the symbol $p \in \cS^m_{1,0}(\R \times \R)$ via
\begin{equation*}
	\opOP(p) u(\xi) := \intR e^{-i \xi x} p(\xi,x) \check{u}(x) \odd x,  \quad \xi \in \R, \quad u \in \SchwR,
\end{equation*}
and it can be shown that this is a bounded mapping on $\SchwR$ (see\cite[Thm.~3.6]{Abels-2011}).

The following result will be used later on and we include it here for convenience. We refer to \cite[Lem.~4.4]{ArSi-resolvent-2022} for a proof.

\begin{lemma}
	\label{lem:pdo.comp}
	Let $F \in \CiR$ and $m > 0$ be such that 
	\begin{equation}
		\label{eq:Vsymbolclass}
		\forall n \in \N_0, \quad \exists C_n>0, \quad |F^{(n)}(x)| \le C_n \langle x \rangle^{m - n}, \quad x \in \R,
	\end{equation}
	and let $\phi \in \CiR \cap \LiR$ be such that $\supp \phi'$ is bounded. For $j \in \N_0$ and $u \in \SchwR$, we define the operators (with $\opP:=\opP^{(0)}$ and $\opQ:=\opQ^{(0)}$)
	\begin{align}
		\label{eq:composition.pq.def}
		\opP^{(j)} u := \sF F^{(j)} \sF^{-1} u, \qquad \opQ^{(j)} u := \phi^{(j)} u.
	\end{align}
	Then, for any $N \in \N_0$, we have 
	\begin{equation}
		\label{eq:compositionformula}
		[\opP,\opQ]  u = \sum_{j=1}^{N} \frac{i^j}{j!} \opQ^{(j)} \opP^{(j)} u + \opR_{N+1} u, \quad u \in \SchwR,
	\end{equation}
	where $R_{N+1}$ is a pseudo-differential operator with symbol $r_{N+1} \in \cS^{m-N-1}_{1,0}(\R \times \R)$
	\begin{equation}
		\label{eq:comp.operator.remainder}
		\opR_{N+1} \, u(\xi) := \intR e^{-i \xi x} r_{N+1}(\xi,x) \check{u}(x) \odd x.
	\end{equation}
	Moreover, for every $N \in \N$ with $N > m$, there exist $l = l(N) \in \N$ and $K_N>0$, independent of $F$ and $\phi$, such that %
	\begin{equation}
		\label{eq:R_N.est}
		\| \opR_{N+1} u \| \le K_N  \underset{0 \le j \le l}{\max} \left\{ \| \phi^{(N + 1 + j)}\|_\infty \right\} \| u \|. 
	\end{equation}
\end{lemma}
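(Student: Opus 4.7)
The plan is to compute the commutator's integral kernel directly and then Taylor-expand the multiplier $\phi$ in one variable, using repeated integrations by parts to convert polynomial factors into derivatives. For $u \in \SchwR$, inserting the definitions in \eqref{eq:composition.pq.def} and applying Fubini gives the representation
\begin{equation*}
[\opP,\opQ] u(\xi) = \iint e^{i x (\xi'-\xi)} F(x) \bigl(\phi(\xi') - \phi(\xi)\bigr) u(\xi') \odd x \odd \xi',
\end{equation*}
which is the starting point. I would then apply Taylor's formula with integral remainder to $\phi(\xi')$ around $\xi$ to order $N+1$, splitting the expression into a finite sum plus a remainder.

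For the term of order $j$ with $1 \le j \le N$, I would use the identity $(\xi'-\xi)^j e^{i x(\xi'-\xi)} = (-i\partial_x)^j e^{i x(\xi'-\xi)}$ and integrate by parts $j$ times in $x$. The symbol bound \eqref{eq:Vsymbolclass} on $F^{(k)}$ together with the rapid decay of $\check u$ guarantees that boundary terms vanish, and each term collapses to $\frac{i^j}{j!} \phi^{(j)}(\xi) \cdot \sF F^{(j)} \sF^{-1} u(\xi) = \frac{i^j}{j!} \opQ^{(j)} \opP^{(j)} u(\xi)$, matching \eqref{eq:compositionformula}.

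After $N+1$ integrations by parts, the remainder takes the natural form
\begin{equation*}
\opR_{N+1} u(\xi) = \frac{i^{N+1}}{N!} \iint e^{i x (\xi'-\xi)} F^{(N+1)}(x) \int_0^1 (1-s)^N \phi^{(N+1)}\bigl((1-s)\xi + s\xi'\bigr) \odd s \, u(\xi') \odd x \odd \xi',
\end{equation*}
which is an \emph{amplitude} operator (the $\xi'$-dependence in $\phi^{(N+1)}$ is non-trivial) rather than a standard Kohn--Nirenberg PDO. To cast it into the form \eqref{eq:comp.operator.remainder}, I would apply the usual amplitude-to-symbol reduction, writing $r_{N+1}(\xi,x)$ as an oscillatory integral and expanding $\phi^{(N+1)}$ once more in a Taylor series in $\xi'$ about $\xi$; symbol-class membership $r_{N+1} \in \cS^{m-N-1}_{1,0}(\R \times \R)$ then follows from \eqref{eq:Vsymbolclass} applied to $F^{(N+1)}$ and from the boundedness of $\supp \phi^{(N+1)} \subset \supp \phi'$, which controls all $\xi$-derivatives uniformly.

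For \eqref{eq:R_N.est}, the condition $N > m$ ensures $m - N - 1 < -1$, so $r_{N+1}$ is bounded and decays in $x$. I would then invoke the Calder\'on--Vaillancourt theorem to bound $\|\opR_{N+1}\|$ by a finite symbol semi-norm $|r_{N+1}|_l^{(0)}$ for some $l = l(N)$; since $x$-differentiation only produces further $F$-derivatives (contributing $F$-dependent but $\phi$-independent constants), while $\xi$-differentiation only hits $\phi^{(N+1)}$, the semi-norm depends on $\phi$ solely through $\|\phi^{(N+1+j)}\|_\infty$ with $0 \le j \le l$. The main technical obstacle is this last step — namely, performing the amplitude-to-symbol reduction with enough bookkeeping to verify both the symbol-class estimate and the advertised dependence of the norm bound on $\phi$; the cleanest alternative, should the oscillatory-integral manipulation become cumbersome, would be to estimate the integral kernel of $\opR_{N+1}$ in $(\xi,\xi')$ directly and apply Schur's test.
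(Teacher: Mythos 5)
The paper itself contains no proof of this lemma: it is imported verbatim with a pointer to \cite[Lem.~4.4]{ArSi-resolvent-2022}, so your argument can only be compared with the standard proof of such commutator expansions, and that is essentially what you give. Your computation of the main terms is correct: Taylor-expanding $\phi(\xi')$ about $\xi$, writing $(\xi'-\xi)^j e^{ix(\xi'-\xi)}=(-i\partial_x)^j e^{ix(\xi'-\xi)}$ and integrating by parts onto $F$ does produce $\tfrac{i^j}{j!}\opQ^{(j)}\opP^{(j)}u$ as in \eqref{eq:compositionformula}; the only point worth spelling out is that the integrations by parts in $x$ must be performed with the $\xi'$-integral carried out first (the inner integral is a rapidly decaying function of $x$ because $u\in\SchwR$ and all $\phi^{(k)}$, $k\ge 1$, are bounded with bounded support), which is what makes the boundary terms vanish despite the growth of $F$. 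Your remainder formula with the integral form of the Taylor remainder and the factor $F^{(N+1)}$ is also correct, and treating it as an amplitude operator and reducing to a symbol in $\cS^{m-N-1}_{1,0}(\R\times\R)$ is the standard route to \eqref{eq:comp.operator.remainder}. For the norm bound \eqref{eq:R_N.est}, the Schur-test alternative you mention at the end is actually the cleanest completion and avoids the Calder\'on--Vaillancourt bookkeeping: since the $\phi^{(N+1)}$ factor is independent of $x$, the kernel of $\opR_{N+1}$ factorises as $\bigl(\int_0^1(1-s)^N\phi^{(N+1)}(\xi+s(\xi'-\xi))\,\dd s\bigr)$ times a constant multiple of $\sF F^{(N+1)}(\xi-\xi')$, and two further integrations by parts in $x$ (legitimate because $N>m$ gives $F^{(N+1)},F^{(N+3)}\in L^1(\R)$ by \eqref{eq:Vsymbolclass}) yield a kernel bounded by $\|\phi^{(N+1)}\|_\infty\langle\xi-\xi'\rangle^{-2}$ up to $F$-dependent constants, to which Schur's test applies directly. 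The one caveat, which your closing remark already concedes, is that on any route the constant $K_N$ carries the constants $C_n$ from \eqref{eq:Vsymbolclass}; the ``independence of $F$'' in the statement must be read as dependence only on those symbol semi-norms (as in the cited source and in the way \eqref{eq:R_N.est} is used later in the paper), since a literal $F$-independent bound would fail under scaling $F\mapsto tF$.
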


\subsection{Schr\"odinger operators with complex potentials}
\label{ssec:Schr.prelim}
Let $\emptyset \neq \Omega \subset \Rd$ be open. For a measurable function $m: \Omega \to \C$, we denote the maximal domain of the multiplication operator determined by the function $m$ as
\begin{equation}\label{eq:multiplication.domain}
\Dom(m) = \{ u \in L^2(\Omega) \, : \, mu \in L^2(\Omega)\};
\end{equation}	
the Dirichlet Laplacian in $L^2(\Omega)$ is denoted by $-\Delta_D$ and 
\begin{equation}
\Dom(\Delta_D) = \{ u \in W_0^{1,2}(\Omega) : \, \Delta u \in L^2(\Omega)\}.		
\end{equation}

Suppose that the complex potential $V:\Omega \rightarrow \C$, $V = V_{u} + V_{b}$, satisfies $\Re V \geq 0$, $V_{u} \in C^1\left(\overline{\Omega}\right)$, $V_{b} \in \LiOm$ and, 
with $\eps_{\rm crit} = 2-\sqrt{2}$,
\begin{equation}\label{eq:vgrowth}
\exists\eps_\nabla \in [0,\eps_{\rm crit}), \quad  \exists M_{\nabla} \geq 0, \quad  
|\nabla V_{u}| \leq \eps_\nabla |V_{u}|^\frac 32 + M_{\nabla} \quad \text{a.e.~in~} \Omega.
\end{equation}

Under these assumptions on $V$ one can find the (Dirichlet) m-accretive realization $\opH = -\Delta_D +V$, with $\Dom(\opH) = \{ u \in W_0^{1,2}(\Omega) \cap \Dom(|V|^\frac12) : \; (-\Delta + V) u \in L^2(\Omega)\}$, by appealing to a generalised Lax-Milgram theorem \cite[Thm.~2.2]{Almog-2015-40}. It is also known that the domain and the graph norm of $\opH$ separate, \ie~$\Dom(H) = \Dom(\Delta_D) \cap \Dom(V)$ and
\begin{equation}
\label{eq:Hcorelowerbound}
\| \opH u \|^2 + \| u \|^2 \gs  \| \Delta_D u \|^2 + \| V u \|^2 + \|u\|^2, \quad u \in \Dom(\opH).
\end{equation}
Furthermore, 
\begin{equation}
\label{eq:Habstractcore}
\Core := \{ u \in \Dom(\opH): \; \supp u \text{ is bounded}\}
\end{equation}
is a core of $\opH$. 
For details, see \cite{Almog-2015-40, Krejcirik-2017-221, Semoradova-toappear} and, for cases with minimal regularity of $V$, see \cite{Brezis-1979-58, Kato-1978-5}, \cite[Chap.~VI.2]{EE}.

\subsection{Generalised Airy operators}
\label{ssec:Airy.prelim}
In Section~\ref{sec:quadop}, we use operators in $L^2(\R)$ of type (with $a \in \LilocR$, $a \ge 0$ a.e. and $\underset{|x| \ge N}{\essinf} a(x) \to +\infty$ as $N \to +\infty$)
\begin{equation}
	\label{eq:Airy.def}
	\opA = \Nt + a(x), \quad \Dom(\opA) = W^{1,2}(\R) \cap \Dom(a),
\end{equation}
which we refer to as generalised Airy operators (on Fourier space). The adjoint operator is
\begin{equation*}
	\opA^* = \Ntp + a(x), \quad \Dom(\opA^*) = W^{1,2}(\R) \cap \Dom(a),
\end{equation*}
and many properties of the usual complex Airy operators are preserved for $\opA$ and $\opA^*$. Namely they have compact resolvent, empty spectrum and
\begin{equation}
	\label{eq:Abeta.graphnorm.realline.def}
	\begin{aligned}
		\| \opA u \|^2 + \| u \|^2 &\gs \|  u' \|^2 + \left\| a u \right\|^2 + \| u \|^2, \quad u \in \Dom(\opA),\\
		\|  \opA^* u \|^2 + \| u \|^2 &\gs \| u' \|^2 + \left\| a u \right\|^2 + \| u \|^2, \quad u \in \Dom(\opA^*),
	\end{aligned}
\end{equation}
where the domain and graph norm separation require the additional assumptions that $a \in \LilocR \cap C^1\left( \R\setminus [-x_0,x_0] \right)$, with some $x_0 > 0$, and that there exist $\eps \in (0, 1)$ and $M > 0$ such that
\begin{equation*}
	|a'(x)| \le \eps (a(x))^2 + M, \quad |x| > x_0,
\end{equation*}
see \cite[App.~A]{ArSi-resolvent-2022} for details and \cite{ArSi-generalised-2022} for resolvent norm estimates.

\subsection{The damped wave equation}
\label{ssec:dwe.prelim}
The focus of our study shall be the linear operator $\opG$ (see below) associated with the one-dimensional DWE represented by \eqref{eq:dwe.2ndorder}. Following a standard procedure, we re-write the problem as a first order system of linear equations
\begin{equation*}
	\left\{
		\begin{aligned}
			\Ntime u_1(t,x) &= u_2(t,x),\\
			\Ntime u_2(t,x) &= (\Dtp - q(x)) u_1(t,x) - 2 a(x) u_2(t,x),
		\end{aligned}
	\right.
\end{equation*}
which leads naturally to the formal operator matrix
\begin{equation*}
	\begin{pmatrix}
		0 & \opI \\
		\Dtp - q & -2 a
	\end{pmatrix}
	.
\end{equation*}

In order to properly define such a matrix as an unbounded (non-self-adjoint) operator, we follow Section~4 of \cite{gerhat2022schur} which specialises a new general framework for the spectral analysis of operator matrices to the particular case of the DWE. Assuming that $a, q \in L^1_{\loc}(\R)$ with $a, q \ge 0$ a.e., let $\cH_1 := \cW(\R)$ represent the completion of $\CcR$ with respect to the inner product
\begin{equation*}
	\langle f, g \rangle_{\cW} := \intR \Ntp f(x) \overline{\Ntp g(x)} \dd x + \intR q(x) f(x) \overline{g(x)} \dd x
\end{equation*}
and let $\cH_2 := \Lt(\R)$. Furthermore, define the Hilbert space
\begin{equation*}
	\begin{aligned}
		\cD_S &:= W^{1,2}(\R) \cap \Dom(q^\frac12) \cap \Dom(a^\frac12),\\
		\langle f, g \rangle_S &:= \intR \Ntp f(x) \overline{\Ntp g(x)} \dd x + \intR q(x) f(x) \overline{g(x)} \dd x + \intR a(x) f(x) \overline{g(x)} \dd x\\
		&\quad + \intR f(x) \overline{g(x)} \dd x, \quad f, g \in \cD_S,
	\end{aligned}
\end{equation*}
and let $\cD_S^*$ be the space of bounded, conjugate-linear functionals on $\cD_S$. It can be shown that the canonical embeddings $\cD_S \subset \cH_2 \subset \cD_S^*$ are continuous with dense range, that $\CcR$ is densely contained in $\cD_S$ and that $\cD_S$ can also be continuously embedded in $\cH_1$ (see \cite[Prop.~4.6]{gerhat2022schur}). Moreover, the operators
\begin{align*}
	&0 \in \sL(\cH_1), &\opI \in \sL(\cD_S, \cH_1),\\
	&\Dtp - q \in \sL(\cH_1, \cD_S^*), &-2 a \in \sL(\cD_S, \cD_S^*),
\end{align*}
with $\Dtp - q$ and $a$ the unique extensions of
\begin{equation*}
	\begin{aligned}
		((\Dtp - q) f, g)_{\cD_S^* \times \cD_S} &:= -\intR \left(\Ntp f(x) \overline{\Ntp g(x)} + q(x) f(x) \overline{g(x)}\right) \dd x,\\
		(a f, g)_{\cD_S^* \times \cD_S} &:= \intR a(x) f(x) \overline{g(x)} \dd x,
	\end{aligned}
	\quad f, g \in \CcR,
\end{equation*}
are well-defined (see \cite[Prop.~4.9]{gerhat2022schur}). We are therefore in a position to introduce the operator matrix
\begin{equation}
	\label{eq:Ghat.def}
	\widehat\opG :=
	\begin{pmatrix}
		0 & \opI \\
		\Dtp - q & -2 a
	\end{pmatrix}
	\in \sL(\cH_1 \oplus \cD_S, \cH_1 \oplus \cD_S^*),
\end{equation}
its (second) Schur complement
\begin{equation*}
	\widehat \opS(\la) := -2 a - \la + \frac1{\la} (\Dtp - q)|_{\cD_S} \in \sL(\cD_S, \cD_S^*), \quad \la \in \C \setminus \{0\},
\end{equation*}
and the corresponding restrictions
\begin{equation}
	\label{eq:G.def}
	\begin{aligned}
		\opG &:= \widehat\opG|_{\Dom(\opG)},\\
		\Dom(\opG) &:= \{u \in \cH_1 \oplus \cD_S: \widehat\opG u \in \cH_1 \oplus \cH_2\}\\
		&\;= \{u := (u_1, u_2)^t \in \cH_1 \oplus \cD_S: (\Dtp - q) u_1 -2 a u_2 \in \cH_2\},
	\end{aligned}
\end{equation}
and
\begin{equation*}
	\begin{aligned}
		\opS(\la) &:= \widehat\opS(\la)|_{\Dom(\opS(\la))},\\
		\Dom(\opS(\la)) &:= \{u \in \cD_S: \widehat\opS(\la) u \in \cH_2\}\\
		&\;= \{u \in \cD_S: (\Dtp - q - 2 \la a) u \in \cH_2\}.
	\end{aligned}
\end{equation*}
The fundamental result derived from the general setting outlined above is that the operator $\opG$ is m-dissipative with dense domain in both $\cH_1 \oplus \cD_S$ and $\cH_1 \oplus \cH_2$ and it therefore generates a $C_0$-semigroup of contractions on $\cH_1 \oplus \cH_2$. Furthermore, for all $\la \in \C \setminus (-\infty, 0]$, it can also be shown that $\Dom(\opS(\la))$ is dense in $\cD_S$ and that $\opG$ and $\opS(\la)$ are spectrally equivalent in the following sense
\begin{equation}
	\label{eq:G.Tla.spectral.equivalence}
	\la \in \sigma(\opG) \iff 0 \in \sigma(\opS(\la))
\end{equation}
(see \cite[Lem.~4.13, Thm.~4.2]{gerhat2022schur}). Moreover, if $\la \in \C \setminus (-\infty, 0]$ and $0 \in \rho(\opS(\la))$, the operator matrix (with $\opI := \opI_{\cH_1 \to \cH_1}$)
\begin{equation}
	\label{eq:Rla.def}
	\opR_\la :=
	\begin{pmatrix}
		-\frac1{\la} \opI + \frac1{\la^2} \widehat\opS(\la)^{-1} (\Dtp - q) & \frac1{\la} \opS(\la)^{-1}\\
		\frac1{\la} \widehat\opS(\la)^{-1} (\Dtp - q) & \opS(\la)^{-1}
	\end{pmatrix}
	\in \sL(\cH_1 \oplus \cH_2)
\end{equation}
is both a left and right inverse for $\opG - \la$ (see the proof of \cite[Thm.~2.8]{gerhat2022schur} for details).

Letting $\opH_q$ denote the Friedrichs extension of $\Dt + q$ initially defined on $\CcR$, i.e.
\begin{equation}
	\label{eq:Hq.def}
	\opH_q := \Dt + q, \quad \Dom(\opH_q) := \{u \in W^{1,2}(\R) \cap \Dom(q^\frac12) : (\Dt + q) u \in L^2(\R)\},
\end{equation}
it has been proven in \cite{Freitas-2018-264} under more restrictive assumptions on the damping and potential functions (which hold for the operators covered in this paper, see Remark~\ref{rmk:domain.separation}) that the domain of the quadratic operator function in $\Lt(\R)$
\begin{equation}
	\label{eq:Tla.def}
	\opT(\la) := -\la \opS(\la) = \opH_q + 2 \la a + \la^2, \quad \la \in \C \setminus (-\infty, 0],
\end{equation}
separates and does not depend on $\la$
\begin{equation}
	\label{eq:Tla.domain}
	\Dom(\opT(\la)) = \Dom(\opH_q) \cap \Dom(a) \subset \cH_1 \cap \cD_S;
\end{equation}
moreover, the subspace
\begin{equation}
	\label{eq:Tla.core}
	\Dcore := \{ u \in \Dom(\opH_q): \; \supp u \text{ is compact in } \R \} \subset \Dom(a)
\end{equation}
is a core for $\opT(\la)$ and $\opT(\la)^* = \opT(\overline{\la})$, for $\la \in \C \setminus (-\infty, 0]$ (see \cite[Thm.~2.4]{Freitas-2018-264}). It also holds true that, if the damping $a$ satisfies \cite[Asm.~I]{Freitas-2018-264} and is unbounded (see \cite[Asm.~II]{Freitas-2018-264}), the set $\sigma(\opG) \cap \C \setminus (-\infty, 0]$ consists of at most a countable set of isolated eigenvalues of finite multiplicity which may only accumulate at $(-\infty, 0]$ (see \cite[Thm.~3.2]{Freitas-2018-264}).

\section{Assumptions and statement of the main result}
\label{sec:assumptions}
We begin by presenting the assumptions that $a$ and $q$ will obey throughout the rest of the paper. We shall follow the notation introduced in Sub-section~\ref{ssec:dwe.prelim}.

\begin{asm-sec}
\label{asm:a.q.dwe}
Let $a, q \in \CiR$ such that $a \ge 0, q \ge 0$ and assume that the following conditions are satisfied for some $x_0 \in \Rplus$:
\begin{enumerate} [\upshape (i)]
	\item \label{itm:a.incr.unbd} $a$ is unbounded:
	\begin{equation}
		\label{eq:a.unbd.incr}
		\lim_{|x| \to +\infty} a(x) = +\infty;
	\end{equation}
	\item \label{itm:a.symbolclass} $a$ has controlled derivatives:
	\begin{equation}
		\label{eq:a.symb}
		\forall n \in \N, \quad \exists C_n>0, \quad |a^{(n)}(x)| \le C_n \, \left(1 + a(x)\right) \, \langle x \rangle^{- n}, \quad x \in \R;
	\end{equation}
	\item \label{itm:q.symbolclass} $q$ has controlled derivatives:
	\begin{equation}
		\label{eq:q.symb}
		\forall n \in \N, \quad \exists C'_n>0, \quad |q^{(n)}(x)| \le C'_n \, \left(1 + q(x)\right) \, \langle x \rangle^{- n}, \quad x \in \R;
	\end{equation}
	\item \label{itm:a.gt.q} $q$ is eventually not bigger than $a$:
	\begin{equation}
		\label{eq:a.gt.q}
		\exists K > 0, \quad q(x) \le K a(x), \quad |x| > x_0.
	\end{equation}
\end{enumerate}
\end{asm-sec}

\begin{example}
	Damping functions satisfying Assumption~\ref{asm:a.q.dwe}~\ref{itm:a.incr.unbd}-\ref{itm:a.symbolclass} include $a(x) = x^{2n}$, $n \in \N$, $a(x) = \langle x \rangle^p$, $p > 0$, and $a(x) = \log \langle x \rangle^p$, $p > 0$. The same functions are valid potentials $q(x)$ in addition to smooth, non-negative, bounded functions such as $q(x) = k$, $k \ge 0$, and $q(x) = \langle x \rangle^p$, $p \le 0$.
\end{example}

\begin{remark}
	\label{rmk:a.q.symbolclass}
	It can be shown using Assumption~\ref{asm:a.q.dwe}~\ref{itm:a.symbolclass} with $n = 1$ that there exists $m_a > 0$ such that
	\begin{equation*}
		a(x) \ls \langle x \rangle^{m_a}, \quad x \in \R,
	\end{equation*}
	(e.g. see the comments following Example 3.1 in \cite{Krejcirik-2019-276}). Furthermore, for any $n \in \N$
	\begin{equation*}
		|a^{(n)}(x)| \ls (1 + a(x)) \langle x \rangle^{-n} \ls \langle x \rangle^{m_a - n}, \quad x \in \R,
	\end{equation*}
	which shows that $a \in \cS^{m_a}_{1,0}(\R \times \R)$. Similarly it follows from Assumption~\ref{asm:a.q.dwe}~\ref{itm:q.symbolclass} that there exists $m_q > 0$ such that $q \in \cS^{m_q}_{1,0}(\R \times \R)$.
\end{remark}

\begin{remark}
	\label{rmk:domain.separation}
	If $a$ and $q$ satisfy Assumption~\ref{asm:a.q.dwe}, then they automatically obey Assumptions~I and II in \cite{Freitas-2018-264} with $\Omega = \R$ and $a_s = 0$. Therefore the properties of $\opG$ and $\opT(\la)$ described in Sub-section~\ref{ssec:dwe.prelim} hold, in particular the domain separation $\Dom(\opT(\la)) = \Dom(\opH_q) \cap \Dom(a)$. Furthermore, we also have $\Dom(\opH_q) = \WttR \cap \Dom(q)$ (refer to Sub-section~\ref{ssec:Schr.prelim}). It therefore follows from Assumption~\ref{asm:a.q.dwe}~\ref{itm:a.gt.q} that $\Dom(\opT(\la)) = \WttR \cap \Dom(a)$ and \eqref{eq:Tla.core} simplifies to
	\begin{equation*}
		\Dcore = \{ u \in \WttR: \supp u \text{ is compact in } \R \}.
	\end{equation*}
\end{remark}

We now state our main result regarding the asymptotic behaviour of the norm of the resolvent of $\opG$ in the left-hand side (with respect to the imaginary axis) of the complex plane.

\begin{theorem}
	\label{thm:resolvent.G}
	Let $a$ and $q$ satisfy Assumption~\ref{asm:a.q.dwe} and let $\opG$ be the linear operator \eqref{eq:Ghat.def}-\eqref{eq:G.def} acting in $\cH := \cH_1 \oplus \cH_2$, with $\cH_1$ and $\cH_2$ as defined in Sub-section~\ref{ssec:dwe.prelim}. Let $K \subset \overline \Rplus$ be a bounded subset and $\la := -c + i b \in \C$ with $c \in K$ and $b \in \R \setminus \{0\}$. Then as $|b| \rightarrow +\infty$
	\begin{equation}
		\label{eq:resnorm.G}
		\|(\opG - \lambda)^{-1} \| \approx_K 1.
	\end{equation}
\end{theorem}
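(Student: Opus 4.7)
The plan is to combine the closed-form resolvent representation \eqref{eq:Rla.def} with the sharp $\Lt$-asymptotics for $\|\opT(\la)^{-1}\|$ that Theorem~\ref{thm:resolvent.Tla} will supply. From the identity $\opT(\la)=-\la\opS(\la)$ one gets $\opS(\la)^{-1}=-\la\opT(\la)^{-1}$, so at the $\Lt$-level $\|\opS(\la)^{-1}\|_{\Lt\to\Lt}\approx_K |\la|/|b|\to 1$ as $|b|\to+\infty$. The spectral equivalence \eqref{eq:G.Tla.spectral.equivalence} ensures $\la\in\rho(\opG)$ for $|b|$ large, so $\opR_\la=(\opG-\la)^{-1}$ is genuinely defined there.

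For the lower bound in \eqref{eq:resnorm.G}, I would test $\opR_\la$ on vectors of the form $(0,f)^t$: the second coordinate of $\opR_\la(0,f)^t$ is simply $\opS(\la)^{-1}f$, regarded as an $\Lt\to\Lt$ map, whose norm is already $\approx_K 1$. Taking the supremum over $f$ yields $\|\opR_\la\|_{\sL(\cH)}\gs_K 1$ at once.

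For the upper bound, I would estimate the four entries of $\opR_\la$ separately, exploiting the prefactors $\la^{-1}$ and $\la^{-2}$ to absorb any growth in $|\la|$. The $\cH_2\to\cH_2$ entry is exactly $\opS(\la)^{-1}$ and is already controlled. For the $\cH_2\to\cH_1$ entry $\la^{-1}\opS(\la)^{-1}$, I would use the continuous embedding $\cD_S\hookrightarrow\cH_1$ from \cite{gerhat2022schur} together with a $\la$-uniform graph-norm separation for $\opT(\la)$ (inherited from Remark~\ref{rmk:domain.separation} and Sub-section~\ref{ssec:Schr.prelim}, see also \cite{Freitas-2018-264}) of the schematic form $\|u''\|+\|qu\|+|\la|\|au\|+|\la|^2\|u\|\ls\|\opT(\la)u\|+\|u\|$, which trades derivatives and multiplication operators for powers of $|\la|$; combined with the prefactor $\la^{-1}$ this yields $\BigO_K(1)$. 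The $\cH_1\to\cH_2$ entry $\la^{-1}\widehat\opS(\la)^{-1}(\Dtp-q)$ is handled dually: $(\Dtp-q)$ maps $\cH_1$ continuously into $\cD_S^*$ with norm bounded by a constant (directly from the definition of the $\cH_1$-inner product), and $\widehat\opS(\la)^{-1}:\cD_S^*\to\Lt$ is controlled by the same separation argument read in duality. Finally the $\cH_1\to\cH_1$ entry splits as $-\la^{-1}\opI$ (whose norm is $|\la|^{-1}\to 0$) plus $\la^{-2}\widehat\opS(\la)^{-1}(\Dtp-q)$, and the latter is bounded by concatenating the two previous arguments.

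The main obstacle is transporting the $\Lt$-level asymptotics of Theorem~\ref{thm:resolvent.Tla} to the Schur-complement triple $\cD_S\subset\cH_2\subset\cD_S^*$ in which $\widehat\opS(\la)$ naturally lives, with constants uniform in $\la$ as $|b|\to+\infty$. This reduces to quantifying the graph-norm separation of $\opT(\la)$ in $\la$, so that each summand in $\opT(\la)u=\Dt u+qu+2\la a u+\la^2 u$ is controlled individually by $\|\opT(\la)u\|+\|u\|$ with the correct $\la$-weights, and then invoking the identity $\widehat\opS(\la)^{-1}=-\la^{-1}\opT(\la)^{-1}$ extended by density to $\cD_S^*$. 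Once each of the four entries of $\opR_\la$ has been shown to be $\BigO_K(1)$, adding up their contributions completes the proof of the upper bound in \eqref{eq:resnorm.G}.
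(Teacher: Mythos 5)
Your overall skeleton (lower bound via well-chosen test vectors, upper bound by estimating the entries of \eqref{eq:Rla.def} using Theorem~\ref{thm:resolvent.Tla}) is the same as the paper's, and your lower bound is fine: testing on $(0,f)^t$ gives $\|(\opG-\la)^{-1}\|\ge\|\opS(\la)^{-1}\|_{\Lt\to\Lt}=|\la|\,\|\opT(\la)^{-1}\|\approx_K1$, which is the paper's bound $|b|\,\|\opT(\la)^{-1}\|$ in a slightly different dress. Two points, however. First, your "schematic" separation $\|u''\|+\|qu\|+|\la|\|au\|+|\la|^2\|u\|\ls\|\opT(\la)u\|+\|u\|$ is false: it would force $\|\opT(\la)^{-1}\|\ls|b|^{-2}$, contradicting \eqref{eq:resnorm.Tla}. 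The correct estimate is \eqref{eq:Tla.graphnorm}, with $b^4\|u\|^2$ on the right; with it (and \eqref{eq:resnorm.Tla}) one does get \eqref{eq:Hq12.Tlaminus1.norm}, so your treatment of the $(1,2)$, $(2,1)$ and $(2,2)$ entries survives after this correction.

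The genuine gap is the $(1,1)$ entry. In $\opT$-language, $\la^{-2}\widehat\opS(\la)^{-1}(\Dtp-q)$ acting on $\cH_1$ amounts to the bounded extension of $\la^{-1}\opH_q^{\frac12}\opT(\la)^{-1}\opH_q^{\frac12}$, i.e.\ a \emph{single} resolvent factor sandwiched between two unbounded square roots. "Concatenating the two previous arguments" composes the $(1,2)$ and $(2,1)$ entries and therefore produces \emph{two} factors of $\widehat\opS(\la)^{-1}$, which is a different operator; nothing about the $(1,1)$ entry follows from that. What is actually needed is $\|\opH_q^{\frac12}\opT(\la)^{-1}\opH_q^{\frac12}\|\ls_K|b|$, and this does not follow from $\|\opH_q^{\frac12}\opT(\la)^{-1}\|\ls_K1$ and $\|\opT(\la)^{-1}\opH_q^{\frac12}\|\ls_K1$ by any composition, since $\opT(\la)$ is not self-adjoint and one cannot split $\opT(\la)^{-1}$ into two commuting half-powers. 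The paper devotes a separate argument to exactly this: the second resolvent identity relating $\opT(\la)$ to $\opT(\mu)$ at the real point $\mu=|b|$, the $\mu$-uniform estimates of Proposition~\ref{prop:Tmu.estimates} (in particular \eqref{eq:Hq12.Tmuminus12.norm}), and the fact that $\opH_q^{\frac12}\opT(\mu)^{-1}\opH_q^{\frac12}$ is bounded, self-adjoint and non-negative, so that $\|(\opH_q^{\frac12}\opT(\mu)^{-1}\opH_q^{\frac12}-z_{\la,\mu})^{-1}\|\le|\Im z_{\la,\mu}|^{-1}$ with $z_{\la,\mu}=-\la/(\mu-\la)$. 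Unless you supply this step (or a substitute, e.g.\ an interpolation argument between the bounds $\|\opH_q\opT(\la)^{-1}\|\ls_K|b|$ and $\|\opT(\la)^{-1}\opH_q\|\ls_K|b|$ of \eqref{eq:Hq.Tlaminus1.norm}), the upper bound in \eqref{eq:resnorm.G} is not proved.
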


\begin{remark}
	\label{rmk:b.asymptotic}
	The statement of Theorem~\ref{thm:resolvent.G} describes the asymptotic behaviour of the resolvent of $\opG$ as a function of the spectral parameter $b$ and it should be understood as follows: there exists $b_0(K) > 0$ such that for all $|b| \ge b_0(K)$ and all $c \in K$, then \eqref{eq:resnorm.G} holds. The same remark applies to other asymptotic results involving $\la = -c + i b$ (whether in relation to $\opG$ or the quadratic family $\opT(\la)$) throughout this paper.
\end{remark}

\begin{remark}
	\label{rmk:rays.growth}
	We note that the statement \eqref{eq:resnorm.G} is far from obvious. For example, it has been shown (see \cite[Ex.~3.9]{Arifoski-2020-52}) that for polynomial-like dampings and potentials, i.e. for functions $a,q \in C^{n+1}(\R)$, with $n > 1$, satisfying
	\begin{equation*}
		\forall x \gs 1, \quad a(x) = x^p, \quad |q^{(j)}(x)| \ls x^{r-j}, \quad p,r \in \Rplus, \quad 0 \le j \le n, \quad j \in \N_0,
	\end{equation*}
	the norm of the resolvent of the corresponding generator $\opG$ diverges to $+\infty$ along \textit{any} ray in the second (or third) quadrant
	\begin{equation*}
		\| (\opG - \la)^{-1} \| \gs b^{(n-1)(p+1) + 2}, \quad b \to +\infty,
	\end{equation*}
	where $\la = -\alpha + i \beta$, $\alpha = a(b)$, $\beta = k \alpha$, and $k \in \Rplus$ is arbitrary. A similar divergence (albeit with a different rate: $(\log b)^{n-1} b^{n+1}$) is observed for logarithmic dampings and potentials (see \cite[Ex.~3.11]{Arifoski-2020-52}). By adding obvious restrictions, both sets of examples can be chosen so that they fall within the scope of Assumption~\ref{asm:a.q.dwe} and therefore Theorem~\ref{thm:resolvent.G} applies to them, meaning that $\| (\opG - \la)^{-1} \|$ is (asymptotically) approximately constant on vertical lines.
\end{remark}

We close this section by drawing some consequences from the theorem that highlight the dependency of the long-time behaviour of the corresponding semigroup on the location of $\sigma(\opG)$.

\begin{lemma}
	\label{lem:sigmap.G.not.imaginary}
	Let $a$, $q$, $\opG$ and $\cH$ be as in the statement of Theorem~\ref{thm:resolvent.G} and assume furthermore that $a \ne 0$ almost everywhere. Then $\sigma_p(G) \cap i \R = \emptyset$.
\end{lemma}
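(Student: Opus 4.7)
My plan is to argue by contradiction: suppose $\lambda = i b$ with $b \in \R$ lies in $\sigma_p(\opG)$ and pick an eigenvector $u = (u_1, u_2)^t \in \Dom(\opG) \setminus \{0\}$ satisfying $\opG u = \lambda u$. Reading off the two rows of the operator matrix \eqref{eq:Ghat.def}--\eqref{eq:G.def}, the eigenvalue equation becomes $u_2 = \lambda u_1$ together with $(\Dtp - q) u_1 - 2 a u_2 = \lambda u_2$ as an identity in $\cH_2 = L^2(\R)$.

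I would first treat the main case $b \neq 0$, so that $\lambda \in \C \setminus (-\infty, 0]$. Substituting $u_2 = \lambda u_1$ into the second relation rewrites it as $\opT(\lambda) u_1 = 0$ with $u_1 = u_2/\lambda \neq 0$. The combination $u_1 \in \cD_S$ (inherited from $u_2 \in \cD_S$) and $(\Dtp - q - 2 \lambda a) u_1 = \lambda^2 u_1 \in L^2(\R)$ places $u_1$ in the natural domain of the Schur complement, which by Remark~\ref{rmk:domain.separation} coincides with $\Dom(\opT(\lambda)) = \WttR \cap \Dom(a)$. Pairing $\opT(ib) u_1 = 0$ with $u_1$ in $L^2(\R)$ and using that $\opH_q$ is non-negative self-adjoint together with $u_1 \in \Dom(a) \cap \Dom(q^{1/2})$ yields
\begin{equation*}
    0 = \langle \opT(i b) u_1, u_1 \rangle = \langle \opH_q u_1, u_1 \rangle + 2 i b \intR a(x) |u_1(x)|^2 \, \dd x - b^2 \|u_1\|^2,
\end{equation*}
whose first and third summands are real. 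Taking imaginary parts forces $2 b \intR a |u_1|^2 \dd x = 0$, and since $b \neq 0$, $a \ge 0$, and $a \neq 0$ almost everywhere, this gives $u_1 = 0$ a.e., contradicting $u_1 \neq 0$.

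For the remaining value $\lambda = 0$, the first row of $\opG u = 0$ gives $u_2 = 0$ directly, so the second collapses to $(\Dtp - q) u_1 = 0$ in $\cH_2 \subset \cD_S^*$ with $u_1 \in \cH_1 = \cW(\R)$. Applying the defining formula for $\Dtp - q \in \sL(\cH_1, \cD_S^*)$ (extended from the $\CcR$ pairing recalled in Sub-section~\ref{ssec:dwe.prelim}) I obtain $\langle u_1, \varphi \rangle_\cW = 0$ for every $\varphi \in \cD_S$; choosing a sequence $\varphi_n \in \CcR$ with $\varphi_n \to u_1$ in $\cH_1$ and passing to the limit yields $\|u_1\|_\cW^2 = 0$, hence $u_1 = 0$ in $\cH_1$ and $u = 0$, a contradiction.

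The main delicate point is domain-theoretic: one has to upgrade the a priori information $u_1 \in \cH_1$ supplied by $u \in \Dom(\opG)$ to the full $\WttR \cap \Dom(a)$ regularity that makes the inner-product computation rigorous, which is precisely what the eigenvalue relation together with $u_2 \in \cD_S$ buys. Once this regularity is in place, the hypothesis $a \neq 0$ a.e. does all the remaining work through the single imaginary-part identity.
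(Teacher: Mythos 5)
Your proof is correct and follows essentially the same route as the paper: split off $\la=0$ (where density of $\CcR$ in $\cW(\R)$ forces $u_1=0$) and, for $\la=ib$ with $b\ne 0$, reduce the eigenvalue equation to the kernel of the quadratic pencil $\opT(ib)$ and take the imaginary part of the quadratic form, using $a\ne 0$ a.e. The only cosmetic difference is that you derive the reduction $u_2=\la u_1$, $\opT(\la)u_1=0$ directly from the rows of \eqref{eq:Ghat.def} (and phrase it for $u_1$ rather than $u_2$) where the paper invokes Claim (ii) of \cite[Thm.~3.2]{Freitas-2018-264}; the substance is identical.
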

\begin{proof}
	Assume firstly that there exists $0 \ne u:= (u_1, u_2)^t \in \Dom(\opG) \subset \cW(\R) \oplus \cD_S$ (see \eqref{eq:G.def}) such that $u \in \Ker(\opG)$. It follows
	\begin{equation*}
		\| \opG u \|_{\cH} = 0 \implies \| \Ntp u_2 \|^2 + \| q^{\frac12} u_2 \|^2 + \| \opH_q u_1 + 2 a u_2 \|^2 = 0
	\end{equation*}
	and hence we have $\| \Ntp u_2 \| = 0$. Since $u_2 \in \cD_S \subset \WotR \subset \Lt(\R)$, we obtain that $u_2 = 0$  and therefore $a u_2 = 0$. This shows that $\opH_q u_1 \in \Lt(\R)$ and $\opH_q u_1 = 0$ and consequently for any $f \in \CcR$
	\begin{equation*}
		0 = \langle \opH_q u_1, f \rangle = \langle \Ntp u_1, \Ntp f \rangle + \langle q^{\frac12} u_1, q^{\frac12} f \rangle = \langle u_1, f \rangle_{\cW}.
	\end{equation*}
	Noting that $\CcR$ is dense in $\cW(\R)$ for $\| \cdot \|_{\cW}$, we conclude that $u_1 = 0$ and hence $0 \notin \sigma_p(\opG)$.
	
	Let $\la := i b$, with $b \in \R \setminus \{0\}$, and assume that there exists $0 \ne u:= (u_1, u_2)^t \in \Dom(\opG)$ such that $u \in \Ker(\opG - \la)$. By Claim (ii) in the proof of \cite[Thm.~3.2]{Freitas-2018-264} (see Remark~\ref{rmk:domain.separation}), we deduce that $\la u_1 = u_2$ and $u_2 \in \Ker(\opT(\la))$. Then
	\begin{equation*}
		\begin{aligned}
			\opT(\la) u_2 = 0 &\implies \langle \opH_q u_2, u_2 \rangle + 2 \la \langle a u_2, u_2 \rangle + \la^2 \langle u_2, u_2 \rangle = 0\\
			&\implies \| \Ntp u_2 \|^2 + \| q^{\frac12} u_2 \|^2 + 2 i b \| a^{\frac12} u_2 \|^2 - b^2 \| u_2 \|^2 = 0
		\end{aligned}
	\end{equation*}
	and hence (note $b \ne 0$) we have $\| a^{\frac12} u_2 \| = 0$. Since $a > 0$ a.e. by assumption, we conclude that $u_2 = 0$ and therefore $u_1 = 0$, which completes the proof.
\end{proof}

We recall some definitions and properties related to semigroups. The spectral bound of a linear operator $\opA$ is given by
	\begin{equation}
	\label{eq:spec.bound.def}
	s(\opA) := \sup\{\Re \la: \la \in \sigma(\opA)\}.
\end{equation}
If $(\opS_t)_{t \ge 0}$ is a $C_0$-semigroup acting on a Banach space, we define its growth bound as
\begin{equation}
	\label{eq:growth.bound.def}
	\omega_0 := \inf\{\omega \in \R: \exists M_{\omega} \ge 1 \text{ s.t. } \| \opS_t \| \le M_{\omega} e^{w t}, \;\; \forall t \ge 0\}.
\end{equation}
The following general relation holds between the growth bound of a $C_0$-semigroup $(\opS_t)_{t \ge 0}$ on a Banach space and the spectral bound of its generator $\opA$
\begin{equation}
	\label{eq:sA.le.omega0}
	-\infty \le s(\opA) \le \omega_0 < +\infty
\end{equation}
(see \cite[Cor.~II.1.13]{Engel-Nagel-book}). Lastly the growth bound of a $C_0$-semigroup on a Hilbert space with generator $\opA$ is given by
\begin{equation}
	\label{eq:growth.bound.H}
	\omega_0 = \inf\{\omega > s(\opA): \underset{s \in \R}{\sup} \|(\opA - (\omega + i s))^{-1}\| < +\infty\}
\end{equation}
(see \cite[Ex.~V.1.13]{Engel-Nagel-book} or the proof of Theorem~2.3 in \cite{Freitas-2020-148}).

Our next result shows that, with additional conditions on $a, q$, the semigroup generated by the operator $\opG$ in Theorem~\ref{thm:resolvent.G} is uniformly exponentially stable and therefore the solutions of the corresponding abstract Cauchy problem decay exponentially as $t \to +\infty$.

\begin{corollary}
	\label{cor:sg.G.decay}
	Let the assumptions of Theorem~\ref{thm:resolvent.G} hold and assume furthermore that $a \ne 0$ almost everywhere and
	\begin{equation*}
		\exists K' > 0, \quad q(x) \ge K' a(x), \quad |x| > x_0.
	\end{equation*}
	Then we have
	\begin{equation}
		\label{eq:omega0.eq.sG}
		\omega_0 = s(\opG) < 0.
	\end{equation}
\end{corollary}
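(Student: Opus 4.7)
The plan is to verify $s(\opG) < 0$ and $\omega_0 = s(\opG)$ separately, with Theorem~\ref{thm:resolvent.G} providing the uniform resolvent control needed for both.

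For $s(\opG) < 0$, the strategy is to show $i\R \subset \rho(\opG)$ and $\sup_{s \in \R} \|(\opG - is)^{-1}\| < +\infty$. The additional hypothesis $q \ge K' a$ on $|x| > x_0$, combined with $q \le K a$ from Assumption~\ref{asm:a.q.dwe}~\ref{itm:a.gt.q}, makes $q$ and $a$ comparable at infinity and both unbounded; consequently $\cW(\R) = \cH_1$ embeds continuously into $L^2$, the spaces $\cH_1$ and $\cD_S$ coincide with equivalent norms, and $\opT(\la)$ has compact resolvent for every $\la \in \C \setminus (-\infty, 0]$. For $\la = is$ with $s \ne 0$ the spectral equivalence \eqref{eq:G.Tla.spectral.equivalence} together with the argument of Lemma~\ref{lem:sigmap.G.not.imaginary} (using $a > 0$ a.e.) gives $\Ker \opT(is) = \{0\}$, hence $is \in \rho(\opG)$. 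For $\la = 0$ I would first show that $\opG$ is bounded below by running the Lemma argument on an approximate null sequence: $\|\opG u_n\|_\cH \to 0$ with $\|u_n\|_\cH = 1$ forces $\|u_{n,2}\|_\cW \to 0$, and then the estimate $|\langle a u_{n,2}, u_{n,1}\rangle| \le \|a^{1/2} u_{n,2}\| \|a^{1/2} u_{n,1}\| \to 0$ (using the $\cW$-control of $a^{1/2} \cdot$ forced by $q \ge K' a$) combined with integration by parts yields $\|u_{n,1}\|_\cW \to 0$, contradicting $\|u_{n,1}\|_\cW \to 1$. Next, I would compute the formal adjoint
\[
	\opG^* = \begin{pmatrix} 0 & -1 \\ \opH_q & -2a \end{pmatrix}
\]
and observe that $\Ker \opG^* = \{0\}$: indeed $\opG^* v = 0$ forces $v_2 = 0$ and then $\opH_q v_1 = 0$, while $\opH_q$ is strictly positive (its ground-state energy is positive because the compact resolvent under $q \to +\infty$ combined with the strict positivity of the form $\|\partial u\|^2 + \|q^{1/2} u\|^2$ forbids $0$ from being attained). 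Bounded below together with $\overline{\Ran(\opG)} = \Ker(\opG^*)^\perp = \cH$ then gives $\Ran(\opG) = \cH$, so $0 \in \rho(\opG)$.

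Having established $i\R \subset \rho(\opG)$, Theorem~\ref{thm:resolvent.G} applied with $K = \{0\}$ yields $\|(\opG - is)^{-1}\| \le C$ for $|s| \ge b_0$, and continuity of the resolvent on the compact segment $i[-b_0, b_0]$ supplies $M := \sup_{s \in \R} \|(\opG - is)^{-1}\| < +\infty$. A Neumann-series perturbation then extends the resolvent holomorphically to the strip $|\Re \la| < 1/M$ with norm bound $\le 2M$, so $s(\opG) \le -1/M < 0$. For the reverse inequality $\omega_0 \le s(\opG)$ I would apply \eqref{eq:growth.bound.H}: for any $\omega \in (s(\opG), 0]$ the line $\omega + i\R$ lies in $\rho(\opG)$, and Theorem~\ref{thm:resolvent.G} with $K = \{-\omega\}$ combined with continuity on a compact remainder gives $\sup_{s \in \R}\|(\opG - (\omega + is))^{-1}\| < +\infty$. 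Hence $\omega_0 \le \omega$ for every such $\omega$, so $\omega_0 \le s(\opG)$, and the reverse inequality \eqref{eq:sA.le.omega0} finally gives $\omega_0 = s(\opG) < 0$.

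The hard part will be verifying $0 \in \rho(\opG)$, in particular the surjectivity step. The route through the adjoint hinges on carefully identifying $\Dom(\opG^*)$ within the matrix-operator framework of \cite{gerhat2022schur}, and the identification $\cH_1 = \cD_S$ forced by the extra assumption $q \ge K' a$ is what keeps the components of $\opG^* v$ inside the appropriate energy spaces so that the kernel computation closes.
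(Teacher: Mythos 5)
Your proposal is correct in substance, and its second half is the paper's argument verbatim: $\omega_0=s(\opG)$ follows from \eqref{eq:growth.bound.H} once $\sup_{s\in\R}\|(\opG-(\omega+is))^{-1}\|<\infty$ is known for every $\omega>s(\opG)$, using m-dissipativity for $\omega>0$ and Theorem~\ref{thm:resolvent.G} (with $K=\{-\omega\}$, plus continuity on the leftover compact segment) for $s(\opG)<\omega\le 0$. Where you genuinely diverge is the proof of $s(\opG)<0$. The paper gets this almost for free: the extra hypothesis $q\ge K'a$ activates \cite[Rmk.~3.3]{Freitas-2018-264}, which extends the spectral equivalence to $\C\setminus(-\infty,-\alpha_q]$ and shows that $\sigma(\opG)\setminus(-\infty,-\alpha_q]$ consists of isolated eigenvalues of finite multiplicity accumulating only in $(-\infty,-\alpha_q]$; combined with Lemma~\ref{lem:sigmap.G.not.imaginary} (and the resolvent bound of Theorem~\ref{thm:resolvent.G}, which rules out eigenvalues drifting towards $i\R$ at infinity) this yields $s(\opG)<0$ with no separate treatment of $\la=0$. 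You instead prove $i\R\subset\rho(\opG)$ by hand and then open a spectral-free strip by a Neumann series from the uniform bound on $i\R$; this is more self-contained (it does not lean on the Freitas essential-spectrum localisation) and it deals explicitly with the ``eigenvalues escaping to infinity near $i\R$'' scenario, at the price of the extra work at $\la=0$. On that point your outline is sound but, as you note, incomplete as written: the lower bound for $\opG$ works once one justifies $\cH_1=\cW(\R)\hookrightarrow L^2(\R)$ and $\cH_1=\cD_S$ (which the hypotheses $q\ge K'a$, $q\le Ka$ and Assumption~\ref{asm:a.q.dwe}~\ref{itm:a.incr.unbd} do give), while for the dense-range step you do not need to identify $\Dom(\opG^*)$ at all: it suffices to test $v\perp\Ran(\opG)$ against $(u_1,0)^t$ and $(0,u_2)^t$ with $u_1,u_2\in\CcR$, obtaining $-v_2''+qv_2=0$ in the distributional sense (which has no nontrivial $L^2$ solution since $q\ge0$, so $v_2=0$) and then $\langle u_2,v_1\rangle_{\cW}=0$ for all $u_2\in\CcR$, whence $v_1=0$ by density; alternatively one can simply invoke the Freitas remark as the paper does and skip the $\la=0$ analysis altogether.
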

\begin{proof}
	Under the assumptions of the corollary and applying \cite[Rmk.~3.3]{Freitas-2018-264}, there exists $\alpha_q > 0$ such that the spectral equivalence \eqref{eq:G.Tla.spectral.equivalence} holds for $\la \in \C \setminus (-\infty, -\alpha_q]$ and moreover $\sigma(\opG) \setminus (-\infty, -\alpha_q]$ consists of eigenvalues with finite multiplicity which may only accumulate at points in $(-\infty, -\alpha_q]$. This last observation combined with Lemma~\ref{lem:sigmap.G.not.imaginary} shows that $s(\opG) < 0$.
	
	Moreover note that
	\begin{equation}
		\label{eq:sup.res.G.sG}
		\underset{s \in \R}{\sup} \|(\opG - (\omega + i s))^{-1}\| < +\infty,\quad \forall \omega > s(\opG).
	\end{equation}
	If $\omega > 0$, the above claim is a consequence of the fact that $\opG$ is m-dissipative, whereas for $s(\opG) < \omega \le 0$, it follows from Theorem~\ref{thm:resolvent.G}. Using \eqref{eq:sup.res.G.sG} and \eqref{eq:growth.bound.H}, we deduce that $\omega_0 = s(\opG)$, as required.
\end{proof}

\section{The associated quadratic operator}
\label{sec:quadop}
In this section, our aim is to formulate a number of properties of the operator family $\opT(\la)$ introduced in Sub-section~\ref{ssec:dwe.prelim} which will help us prove Theorem~\ref{thm:resolvent.G} in Section~\ref{sec:mainthmG}. We begin by studying the behaviour of $\| \opT(\la)^{-1} \|$ on the positive real axis.

\begin{proposition}
	\label{prop:Tmu.estimates}
	Let $a$ and $q$ satisfy Assumption~\ref{asm:a.q.dwe}, $\opH_q$ be as in \eqref{eq:Hq.def} and $\mu > 0$. Define $\opT(\mu)$ as in \eqref{eq:Tla.def}-\eqref{eq:Tla.domain}, i.e.
	\begin{equation*}
		\opT(\mu) := \opH_q + 2 \mu a + \mu^2, \quad \Dom(\opT(\mu)) := \WttR \cap \Dom(a).
	\end{equation*}
	Then for every $u \in \Dom(\opT(\mu))$, we have
	\begin{equation}
		\label{eq:Tmu.graphnorm}
		\| u'' \|^2 + \| q u \|^2 + \mu^2 \| a u \|^2 \ls \| \opT(\mu) u \|^2 + \mu^2 \| u \|^2, \quad \mu \to +\infty.
	\end{equation}
	Furthermore, the following inequalities hold
	\begin{gather}
		\label{eq:Tmuminus1.norm}\| \opT(\mu)^{-1} \| \le \mu^{-2}, \quad \mu > 0,\\
		\label{eq:Hq.Tmuminus1.norm}\| \opH_q \opT(\mu)^{-1} \| + \| \opT(\mu)^{-1} \opH_q \| \ls 1, \quad \mu \to +\infty,\\
		\label{eq:Hq12.Tmuminus1.norm}\| \opH^{\frac12}_q \opT(\mu)^{-1} \| + \| \opT(\mu)^{-1} \opH^{\frac12}_q \| \ls \mu^{-1}, \quad \mu \to +\infty,\\
		\label{eq:Hq12.Tmuminus12.norm}\| \opH^{\frac12}_q \opT(\mu)^{-\frac12} \| + \| \opT(\mu)^{-\frac12} \opH^{\frac12}_q \| \ls 1, \quad \mu > 0.
	\end{gather}
\end{proposition}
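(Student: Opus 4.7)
The plan is to establish the resolvent bound \eqref{eq:Tmuminus1.norm} and the graph norm separation \eqref{eq:Tmu.graphnorm} first, and then to deduce the mixed operator norm estimates \eqref{eq:Hq.Tmuminus1.norm}--\eqref{eq:Hq12.Tmuminus12.norm} from them by routine Cauchy--Schwarz and self-adjoint duality arguments. For \eqref{eq:Tmuminus1.norm}, one observes that, since $\mu > 0$ is real and $q, a \ge 0$, $\opT(\mu) = \opH_q + 2\mu a + \mu^2$ is a sum of two non-negative self-adjoint operators (see Sub-section~\ref{ssec:Schr.prelim}) plus a positive constant; hence it is self-adjoint on its natural domain $\WttR \cap \Dom(a)$ (see Remark~\ref{rmk:domain.separation}) and bounded below by $\mu^2$, which immediately yields $\|\opT(\mu)^{-1}\| \le \mu^{-2}$.

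For \eqref{eq:Tmu.graphnorm}, the main idea is to regard $\opT(\mu) = \Dt + V$ as a Schr\"odinger operator with real non-negative potential $V := q + 2\mu a + \mu^2$ and invoke the graph norm separation \eqref{eq:Hcorelowerbound} recalled in Sub-section~\ref{ssec:Schr.prelim}. The essential technical step is to verify the pointwise growth condition \eqref{eq:vgrowth} on $V'$ with constants $\eps_\nabla < 2-\sqrt 2$ and $M_\nabla$ that are uniform in $\mu \ge \mu_0$ for some $\mu_0$ sufficiently large. Assumption~\ref{asm:a.q.dwe}~\ref{itm:a.symbolclass}--\ref{itm:q.symbolclass} with $n = 1$ gives
\begin{equation*}
	|V'(x)| \le C'_1 (1 + q(x)) + 2 \mu C_1 (1 + a(x)),
\end{equation*}
and, using $V \ge \mu^2$ together with the pointwise inequalities $V^{3/2} \ge q^{3/2}$, $V^{3/2} \ge (2\mu a)^{3/2}$ and $V^{3/2} \ge \mu^3$, each summand on the right-hand side can be dominated by $\eps V^{3/2}/3 + M_\eps$ via Young's inequality, with $M_\eps$ independent of $\mu$. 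Choosing $\eps$ small enough so that $\eps < 2-\sqrt 2$ and $\mu_0$ large enough, \eqref{eq:vgrowth} is satisfied uniformly; the Schr\"odinger framework then delivers
\begin{equation*}
	\|\opT(\mu) u\|^2 + \|u\|^2 \gs \|u''\|^2 + \|V u\|^2 + \|u\|^2,
\end{equation*}
and \eqref{eq:Tmu.graphnorm} follows from the pointwise bound $V^2 \ge q^2 + 4\mu^2 a^2 + \mu^4$ and the absorption $\|u\|^2 \le \mu^2 \|u\|^2$ valid for $\mu \ge 1$. A direct alternative expands $\|\opT(\mu) u\|^2 = \|\opH_q u\|^2 + \|(2\mu a + \mu^2) u\|^2 + 2\Re \langle \opH_q u, (2\mu a + \mu^2) u\rangle$, integrates by parts on the cross term to extract the non-negative contribution $\|a^{\frac12} u'\|^2$, and absorbs the $|a'|$- and $|q'|$-remainders via Young's inequality and Assumption~\ref{asm:a.q.dwe}.

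For the remaining estimates, set $u := \opT(\mu)^{-1} f$ with $f \in \Lt(\R)$; combining \eqref{eq:Tmu.graphnorm} and \eqref{eq:Tmuminus1.norm} with $\|\opH_q u\| \le \|u''\| + \|qu\|$ gives $\|\opH_q \opT(\mu)^{-1} f\| \ls \|\opT(\mu) u\| + \mu \|u\| \le \|f\|(1 + \mu^{-1}) \ls \|f\|$, which is \eqref{eq:Hq.Tmuminus1.norm}; the companion bound on $\|\opT(\mu)^{-1}\opH_q\|$ follows by taking adjoints thanks to the self-adjointness of both $\opT(\mu)$ and $\opH_q$. Estimate \eqref{eq:Hq12.Tmuminus1.norm} is an immediate Cauchy--Schwarz consequence of \eqref{eq:Hq.Tmuminus1.norm} and \eqref{eq:Tmuminus1.norm}: $\|\opH_q^{\frac12} \opT(\mu)^{-1} f\|^2 = \langle \opH_q \opT(\mu)^{-1} f, \opT(\mu)^{-1} f\rangle \le \|\opH_q \opT(\mu)^{-1} f\|\,\|\opT(\mu)^{-1} f\| \ls \mu^{-2} \|f\|^2$. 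Finally, \eqref{eq:Hq12.Tmuminus12.norm} follows from the form inequality $\|\opH_q^{\frac12} v\|^2 \le \langle \opT(\mu) v, v\rangle$, which is immediate since $\opT(\mu) \ge \opH_q$ on the form domain $\cD_S$, applied to $v := \opT(\mu)^{-\frac12} f$. The main obstacle is the uniform-in-$\mu$ verification of the Kato-type condition \eqref{eq:vgrowth} (equivalently, the careful tracking of constants in the direct expansion of $\|\opT(\mu) u\|^2$); once \eqref{eq:Tmu.graphnorm} is in hand, the other bounds are standard.
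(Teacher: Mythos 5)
Your proposal is correct, and for most of the statement it runs along the same lines as the paper: \eqref{eq:Tmuminus1.norm} from self-adjointness and the pointwise lower bound $q+2\mu a+\mu^2\ge\mu^2$; \eqref{eq:Hq.Tmuminus1.norm} by inserting $u=\opT(\mu)^{-1}f$ into $\|\opH_q u\|\le\|u''\|+\|qu\|\ls\|\opT(\mu)u\|+\mu\|u\|$ and taking adjoints; \eqref{eq:Hq12.Tmuminus1.norm} by Cauchy--Schwarz (you apply it directly to $\langle \opH_q\opT(\mu)^{-1}f,\opT(\mu)^{-1}f\rangle$ and then use \eqref{eq:Hq.Tmuminus1.norm} and \eqref{eq:Tmuminus1.norm}, where the paper interpolates $\|\opH_q^{1/2}u\|\le\|\opH_q u\|^{1/2}\|u\|^{1/2}$ first -- the two are equivalent); and \eqref{eq:Hq12.Tmuminus12.norm} from the form inequality $\opT(\mu)\ge\opH_q$. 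The genuine difference is in the key estimate \eqref{eq:Tmu.graphnorm}: the paper proves it by hand, expanding $\|\opT(\mu)u\|^2$, discarding the non-negative terms, integrating by parts on the cross term and absorbing the $q'$- and $a'$-remainders via Assumption~\ref{asm:a.q.dwe} and Young's inequality, which keeps all constants explicitly $\mu$-independent. You instead reduce to the abstract separation result \eqref{eq:Hcorelowerbound} for Schr\"odinger operators by verifying the Everitt--Giertz-type condition \eqref{eq:vgrowth} for $V=q+2\mu a+\mu^2$ with $\eps_\nabla<2-\sqrt2$ and $M_\nabla$ uniform in $\mu$; your uniform verification is correct (each piece of $|V'|$ is dominated via $V^{3/2}\ge q^{3/2}$, $V^{3/2}\ge(2\mu a)^{3/2}$, $V^{3/2}\ge\mu^3$), and it is exactly what is needed. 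The only caveat is that the statement \eqref{eq:Hcorelowerbound}, as recalled in the paper's preliminaries, does not explicitly say that the implicit constant depends only on $(\eps_\nabla,M_\nabla)$; that quantitative dependence is true in the cited references (and the generous $\mu^2\|u\|^2$ on the right-hand side of \eqref{eq:Tmu.graphnorm} absorbs any $M_\nabla$-dependent additive term), but a careful write-up should either cite it in that uniform form or fall back on the direct expansion you sketch as an alternative, which is precisely the paper's argument. Your abstract route buys brevity and reuse of the Sub-section~\ref{ssec:Schr.prelim} machinery; the paper's direct computation buys self-contained, manifestly uniform constants.
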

\begin{proof}
	Let $u \in \Dom(\opT(\mu))$ and $\opH_{\mu} := \opH_q + 2 \mu a$ with $\Dom(\opH_{\mu}) := \Dom(\opT(\mu))$. Then $\opT(\mu)$ and $\opH_{\mu}$ are non-negative, self-adjoint operators and we have
	\begin{equation}
		\label{eq:Tmu.lowbound}
		\| \opT(\mu) u \|^2 = \| \opH_{\mu} u \|^2 + \mu^4 \| u \|^2 + 2 \mu^2 \langle \opH_{\mu} u, u\rangle \ge \| \opH_{\mu} u \|^2.
	\end{equation}
	Furthermore using $q + 2 \mu a \ge 0$
	\begin{equation}
		\label{eq:Hq.lowbound}
		\begin{aligned}
			\| \opH_{\mu} u \|^2 &= \| u'' \|^2 + \| (q + 2 \mu a) u \|^2 + 2 \Re \langle u', ((q + 2 \mu a) u)' \rangle\\
			&\ge \| u'' \|^2 + \| (q + 2 \mu a) u \|^2 - 2 |\langle u', (q' + 2 \mu a') u \rangle|.
		\end{aligned}
	\end{equation}
	Applying Assumptions~\ref{asm:a.q.dwe}~\ref{itm:a.symbolclass}, \ref{itm:q.symbolclass}, with $n = 1$, there exists $C > 0$ such that for any arbitrarily small $\eps > 0$
	\begin{equation*}
		\begin{aligned}
			|\langle u', (q' + 2 \mu a') u \rangle| &\le \| u' \| \| (|q'| + 2 \mu |a'|) u \| \le C \| u' \| \left(\| (1 + 2 \mu) u \| + \| (q + 2 \mu a) u \|\right)\\
			&\le \frac{C}{2} \left(\| u' \|^2 + (1 + 2 \mu)^2 \| u \|^2 + \eps^{-1} \| u' \|^2 + \eps \| (q + 2 \mu a) u \|^2\right)\\
			&\le \frac{C}{2} \left( \eps \| u'' \|^2 + \eps \| (q + 2 \mu a) u \|^2 \right.\\
			&\qquad \left.+ \mu^2 (C_{\eps} \mu^{-2} + (2 + \mu^{-1})^2) \| u \|^2\right),
		\end{aligned}
	\end{equation*}
	with some (possibly large) constant $C_{\eps} > 0$. This shows that, for any small (but fixed) $\eps > 0$, we can find constants $C, C'_{\eps} > 0$ (independent of $\mu$) such that
	\begin{equation*}
		2 |\langle u', (q' + 2 \mu a') u \rangle| \le C \eps \left(\| u'' \|^2 + \| (q + 2 \mu a) u \|^2\right) + C'_{\eps} \mu^2 \| u \|^2, \quad \mu \to +\infty.
	\end{equation*}
	Hence by \eqref{eq:Hq.lowbound} we deduce
	\begin{equation*}
		\| \opH_{\mu} u \|^2 \ge (1 - C \eps) \left(\| u'' \|^2 + \| (q + 2 \mu a) u \|^2\right) - C'_{\eps} \mu^2 \| u \|^2, \quad \mu \to +\infty.
	\end{equation*}
	Selecting an adequately small $\eps$ and substituting in \eqref{eq:Tmu.lowbound}, we find
	\begin{equation}
		\label{eq:Tmu.graphnorm.0}
		\| u'' \|^2 + \| (q + 2 \mu a) u \|^2 \ls \| \opT(\mu) u \|^2 + \mu^2 \| u \|^2, \quad \mu \to +\infty.
	\end{equation}
	It is easy to see that
	\begin{equation*}
		\| (q + 2 \mu a) u \|^2 \ge \| q u \|^2 + 4 \mu^2 \| a u \|^2,
	\end{equation*}
	which, in combination with \eqref{eq:Tmu.graphnorm.0}, yields \eqref{eq:Tmu.graphnorm}.
	
	Clearly $q + 2\mu a + \mu^2 \ge \mu^2 > 0$ and hence $\sigma(\opT(\mu)) \subset [\mu^2, \infty)$. It follows that $\opT(\mu)$ is invertible for all $\mu > 0$. Moreover by the Rellich's criterion (see \cite[Thm.~XIII.65]{Reed4}) the set
	\begin{equation*}
		\begin{aligned}
			S &= \{\psi \in \Lt(\R): \intR |\psi(x)|^2 \dd x \le 1, \intR (q(x) + 2 \mu a(x))^2 |\psi(x)|^2 \dd x \le 1,\\
			&\hspace{1in} \intR \xi^2 |\psi(\xi)|^2 \dd \xi \le 1\}
		\end{aligned}
	\end{equation*}
	is compact and therefore, using the graph norm estimate \eqref{eq:Tmu.graphnorm}, we conclude that $\opT(\mu)$ has compact resolvent. Since $\opT(\mu)^{-1}$ is bounded and self-adjoint, we find that $\| \opT(\mu)^{-1} \| = \rad(\opT(\mu)^{-1}) \le \mu^{-2}$ which proves \eqref{eq:Tmuminus1.norm}.
	
	For $u \in \Dom(\opT(\mu)) \subset \Dom(\opH_q)$, appealing once again to \eqref{eq:Tmu.graphnorm},
	\begin{equation}
		\label{eq:Hq.Tmu.est}
		\| \opH_q u \| \le \| u'' \| + \| q u \| \ls \| \opT(\mu) u \| + \mu \| u \|, \quad \mu \to +\infty,
	\end{equation}
	therefore, letting $u := \opT(\mu)^{-1} v$ with $v \in \Lt(\R)$ such that $\| v \| \le 1$, we have by \eqref{eq:Tmuminus1.norm}
	\begin{equation*}
		\| \opH_q \opT(\mu)^{-1} v \| \ls \| v \| + \mu \| \opT(\mu)^{-1} v \| \ls 1 + \mu^{-1} \ls 1, \quad \mu \to +\infty,
	\end{equation*}
	which proves that $\| \opH_q \opT(\mu)^{-1} \| \ls 1$. Using the fact that $(\opH_q \opT(\mu)^{-1})^*$ is bounded and the property of adjoint $(\opA \opB)^* \supset \opB^* \opA^*$, if $\opA \opB$ is densely defined, we deduce that $\opT(\mu)^{-1} \opH_q$ has a bounded extension which completes the proof of \eqref{eq:Hq.Tmuminus1.norm}.
	
	For $u \in \Dom(\opT(\mu)) \subset \Dom(\opH_q)$, we have as $\mu \to +\infty$
	\begin{gather*}
		\| \opH_q^{\frac12} u \|^2 = \langle \opH_q u, u \rangle \le \| \opH_q u \| \| u \| \implies\\
		\| \opH_q^{\frac12} u \| \le \| \opH_q u \|^{\frac12} \| u \|^\frac12 \le \frac12 \left(\mu^{-1} \| \opH_q u \| + \mu \| u \|\right) \ls \mu^{-1} \| \opT(\mu) u \| + \mu \| u \|,
	\end{gather*}
	using \eqref{eq:Hq.Tmu.est} in the last step. Taking $u := \opT(\mu)^{-1} v$ as before and applying \eqref{eq:Tmuminus1.norm}, we obtain
	\begin{equation*}
		\| \opH_q^{\frac12} \opT(\mu)^{-1} v \| \ls \mu^{-1} \| v \| + \mu \| \opT(\mu)^{-1} v \| \ls \mu^{-1}, \quad \mu \to +\infty,
	\end{equation*}
	which shows that $\| \opH_q^{\frac12} \opT(\mu)^{-1} \| \ls \mu^{-1}$ and, using adjoints as above, we deduce \eqref{eq:Hq12.Tmuminus1.norm}.
	
	Finally, for any $\mu > 0$, taking $u \in \Dom(\opT(\mu)^{\frac12})$, we have
	\begin{equation*}
		\| \opT(\mu)^{\frac12} u \|^2 = \langle \opT(\mu) u, u \rangle = \langle \opH_q u, u \rangle + \langle (2 \mu a + \mu^2) u, u \rangle \ge \langle \opH_q u, u \rangle = \| \opH_q^{\frac12} u \|^2.
	\end{equation*}
	Letting $u := \opT(\mu)^{-\frac12} v$ with $v \in \Lt(\R)$ such that $\| v \| \le 1$, we deduce
	\begin{equation*}
		\| \opH_q^{\frac12} \opT(\mu)^{-\frac12} v \| \le \| v \| \le 1 \implies \| \opH_q^{\frac12} \opT(\mu)^{-\frac12} \| \le 1,
	\end{equation*}
	and, using adjoints, we obtain \eqref{eq:Hq12.Tmuminus12.norm}.
\end{proof}

\begin{proposition}
	\label{prop:Tla.graphnorm}
	Let $a$ and $q$ satisfy Assumption~\ref{asm:a.q.dwe} and let $\la$ be as in the statement of Theorem~\ref{thm:resolvent.G}. If $\opT(\la)$ is the family of operators \eqref{eq:Tla.def}-\eqref{eq:Tla.domain}, then for any $u \in \Dom(\opT(\la)) = \WttR \cap \Dom(a)$, we have
	\begin{equation}
		\label{eq:Tla.graphnorm}
		\| u'' \|^2 + \| q u \|^2 + b^2 \| a u \|^2 \ls \| \opT(\la) u \|^2 + b^4 \| u \|^2, \quad |b| \to +\infty.
	\end{equation}
\end{proposition}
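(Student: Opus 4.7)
The plan is to extract each of the three terms $\| u'' \|^2$, $\| qu \|^2$, $b^2\| au \|^2$ on the left-hand side of \eqref{eq:Tla.graphnorm} separately by testing $\opT(\la) u$ against well-chosen auxiliary functions and exploiting the decomposition $\la = -c + ib$. All integrations by parts below are first performed on the core $\Dcore$ of \eqref{eq:Tla.core} (where everything is unambiguous, since elements of $\Dcore$ are compactly supported $W^{2,2}$ functions) and then extended to $\Dom(\opT(\la)) = \WttR \cap \Dom(a)$ by density in the graph norm.

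I would begin with the quadratic form $\langle \opT(\la) u, u\rangle = \| u' \|^2 + \| q^{1/2} u \|^2 + 2\la \| a^{1/2} u \|^2 + \la^2 \| u \|^2$, whose imaginary and real parts are
\begin{gather*}
	\Im \langle \opT(\la) u, u\rangle = 2b\bigl(\| a^{1/2} u \|^2 - c \| u \|^2\bigr),\\
	\Re \langle \opT(\la) u, u\rangle = \| u' \|^2 + \| q^{1/2} u \|^2 - 2c \| a^{1/2} u \|^2 + (c^2 - b^2)\| u \|^2.
\end{gather*}
Bounding each in modulus by $\| \opT(\la) u \| \| u \|$ and applying Young's inequality---dropping the non-negative $\| q^{1/2} u \|^2$ when extracting $\| u' \|^2$ from the real part---yields, for $|b|$ large,
\begin{equation*}
	b^2\| a^{1/2} u \|^2 \ls_K \| \opT(\la) u \|^2 + b^2 \| u \|^2, \qquad b^2 \| u' \|^2 \ls_K \| \opT(\la) u \|^2 + b^4 \| u \|^2.
\end{equation*}

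The central step is the estimate on $b^2\| au \|^2$, obtained by testing against $au$. Integration by parts gives $\langle -u'', au\rangle = \langle u', a' u\rangle + \int a |u'|^2 \dd x$, hence
\begin{equation*}
	\Im \langle \opT(\la) u, au\rangle = 2b \| au \|^2 - 2cb\| a^{1/2} u \|^2 + \Im \langle u', a'u\rangle.
\end{equation*}
Rearranging, applying Cauchy-Schwarz and using the derivative bound $|a'(x)| \le C(1 + a(x))$ that follows from Assumption~\ref{asm:a.q.dwe}~\ref{itm:a.symbolclass} with $n = 1$, I obtain
\begin{equation*}
	2|b|\| au \|^2 \le \| \opT(\la) u \| \| au \| + C \| u' \|\bigl(\| u \| + \| au \|\bigr) + 2c|b| \| a^{1/2} u \|^2.
\end{equation*}
Young's inequality with scaling proportional to $|b|$ applied to each of the two $\| au \|$ factors on the right then absorbs a fraction of $|b|\| au \|^2$ into the left-hand side, producing
\begin{equation*}
	b^2 \| au \|^2 \ls_K \| \opT(\la) u \|^2 + |b| \| u' \|^2 + b^2 \| a^{1/2} u \|^2 + |b| \| u \|^2,
\end{equation*}
into which the two bounds from the previous paragraph---together with $|b| \ge 1$---can be fed to yield $b^2 \| au \|^2 \ls_K \| \opT(\la) u \|^2 + b^4 \| u \|^2$.

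Finally, rewriting $\opH_q u = \opT(\la) u - 2\la a u - \la^2 u$ and using $|\la|^2 \ls_K b^2$, $|\la|^4 \ls_K b^4$ together with the bound on $\| au \|$ just established, the triangle inequality yields $\| \opH_q u \|^2 \ls_K \| \opT(\la) u \|^2 + b^4 \| u \|^2$. The domain-separation estimate $\| u'' \|^2 + \| qu \|^2 \ls \| \opH_q u \|^2 + \| u \|^2$, which is \eqref{eq:Hcorelowerbound} of Sub-section~\ref{ssec:Schr.prelim} applied with $V = q$ (Assumption~\ref{asm:a.q.dwe}~\ref{itm:q.symbolclass} ensures the growth condition~\eqref{eq:vgrowth}), then completes the proof. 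The delicate point is the absorption in the central step: both $\| \opT(\la) u \|\| au \|$ and the cross term $\| u' \|\| au \|$ give contributions proportional to $|b|\| au \|^2$ on the right-hand side, so the Young parameters must be chosen small enough that the combined coefficient in front of $|b|\| au \|^2$ stays strictly below $2$.
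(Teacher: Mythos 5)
Your proof is correct, but it follows a genuinely different route from the paper. The paper expands $\| \opT(\la) u \|^2$ directly, integrates by parts once, and absorbs every cross term via Young's inequality together with the derivative bounds \eqref{eq:a.symb}--\eqref{eq:q.symb} and the interpolation $\|u'\|^2 \le \|u\|\,\|u''\|$, so the whole inequality emerges from a single (lengthy) bookkeeping computation. You instead extract the pieces one at a time from numerical-range-type identities: $\Im\langle \opT(\la)u,u\rangle$ gives $b^2\|a^{1/2}u\|^2$, $\Re\langle \opT(\la)u,u\rangle$ gives $b^2\|u'\|^2$, the test function $au$ (with the crucial absorption of $|b|\,\|au\|^2$, whose coefficient-below-$2$ condition you correctly flag) gives $b^2\|au\|^2$, and finally $\|u''\|^2 + \|qu\|^2$ is obtained by writing $\opH_q u = \opT(\la)u - 2\la a u - \la^2 u$ and invoking the known separation estimate \eqref{eq:Hcorelowerbound} for $\opH_q$ (whose hypothesis \eqref{eq:vgrowth} indeed follows from Assumption~\ref{asm:a.q.dwe}~\ref{itm:q.symbolclass}, since $|q'| \ls 1+q \le \eps q^{3/2} + M_\eps$, and which the paper itself licenses in Remark~\ref{rmk:domain.separation}). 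Your approach is more modular: it delegates the $\|u''\|$, $\|qu\|$ part to the already established Schr\"odinger separation result and only needs Assumption~\ref{asm:a.q.dwe}~\ref{itm:a.symbolclass} with $n=1$ in the new estimates, at the price of an extra layer (the core-plus-density extension, which works by lower semicontinuity of the left-hand side under graph-norm approximation, and the chaining of the three intermediate bounds, each of which costs a $K$-dependent constant). The paper's direct expansion is more self-contained and yields the constant in one pass; both give an implicit constant depending only on $K$ for $|b|$ large, as required.
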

\begin{proof}
	For any $u \in \Dom(\opT(\la))$, we have
	\begin{equation*}
		\begin{aligned}
			\| \opT(\la) u \|^2 &= \| u'' \|^2 + \| (q + 2 \la a +\la^2) u \|^2 + 2 \Re \langle -u'', (q + 2 \la a + \la^2) u \rangle\\
			&= \| u'' \|^2 + \| q u \|^2 + 4 |\la|^2 \| a u \|^2 + |\la|^4 \| u \|^2 + 2 \Re \langle q u, 2 \la a u \rangle\\
			&\quad + 2 \Re \langle q u, \la^2 u \rangle + 2 \Re \langle 2 \la a u, \la^2 u \rangle + 2 \Re \langle u', q' u \rangle + 2 \Re \langle u', q u' \rangle\\
			&\quad + 2 \Re \langle u', 2 \la a' u \rangle + 2 \Re \langle u', 2 \la a u' \rangle + 2 \Re \langle u', \la^2 u' \rangle.
		\end{aligned}
	\end{equation*}
	The term $\Re \langle u', q u' \rangle \ge 0$ can be dropped and, using integration by parts, we have
	\begin{equation*}
		|\Re \langle u', 2 \la a u' \rangle| = |\langle u', 2 c a u' \rangle| \le |\langle 2 c a' u, u' \rangle| + |\langle 2 c a u, u'' \rangle|.
	\end{equation*}
	Hence, taking $C_1, C'_1 > 0$ from Assumption~\ref{asm:a.q.dwe}~\ref{itm:a.symbolclass},~\ref{itm:q.symbolclass} with $n = 1$, we get
	\begin{equation*}
		\begin{aligned}
			\| \opT(\la) u \|^2 &\ge \| u'' \|^2 + \| q u \|^2 + 4 |\la|^2 \| a u \|^2 + |\la|^4 \| u \|^2 - 2 \| q u \| \| 2 c a u \|\\
			&\quad - 2 \| q u \| \| |\la|^2 u \| - 2 \| 2 |\la| a u \| \| |\la|^2 u \| - 2 \| u' \| \| C'_1 (1 + q) u \|\\
			&\quad - 2 \| u' \| \| 2 |\la| C_1 (1 + a) u \| - 2 \| u' \| \| 2 c C_1 (1 + a) u \|\\
			&\quad - 2 \| u'' \| \| 2 c a u \| - 2 |\la|^2 \| u' \|^2.
		\end{aligned}
	\end{equation*}
	Fixing a small $\eps > 0$ to be chosen below and repeatedly applying estimates such as $2 \| u \| \| v \| \le \eps \| u \|^2 + \eps^{-1} \| v \|^2$, we obtain
	\begin{equation*}
		\begin{aligned}
			\| \opT(\la) u \|^2 &\ge \| u'' \|^2 + \| q u \|^2 + 4 |\la|^2 \| a u \|^2 + |\la|^4 \| u \|^2 - \eps \| q u \|^2 - 4 c^2 \eps^{-1} \| a u \|^2\\
			&\quad -\eps \| q u \|^2 - |\la|^4 \eps^{-1} \| u \|^2 - 4 |\la|^2 \eps \| a u \|^2 - |\la|^4 \eps^{-1} \| u \|^2 - \| u' \|^2\\
			&\quad - C'^2_1 \| u \|^2 - \eps^{-1} \| u' \|^2 - C'^2_1 \eps \| q u \|^2 - \| u' \|^2 - 4 |\la|^2 C^2_1 \| u \|^2\\
			&\quad - \eps^{-1} \| u' \|^2 - 4 |\la|^2 C^2_1 \eps \| a u \|^2 - \| u' \|^2 - 4 c^2 C^2_1 \| u \|^2 - \| u' \|^2\\
			&\quad - 4 c^2 C^2_1 \| a u \|^2 - \eps \| u'' \|^2 - 4 c^2 \eps^{-1} \| a u \|^2 - 2 |\la|^2 \| u' \|^2\\
			&\ge (1 - \eps) \| u'' \|^2 - 2 |\la|^2 (1 + 2 |\la|^{-2} + \eps^{-1} |\la|^{-2}) \| u' \|^2\\
			&\quad + (1 - 2 \eps - C'^2_1 \eps) \| q u \|^2\\
			&\quad + 4 |\la|^2 (1 - 2 c^2 |\la|^{-2} \eps^{-1} - \eps - C_1^2 \eps - c^2 |\la|^{-2} C_1^2) \| a u \|^2\\
			&\quad - |\la|^4 (2 \eps^{-1} - 1 + C'^2_1 |\la|^{-4} + 4 C_1^2 |\la|^{-2} + 4 C_1^2 c^2 |\la|^{-4}) \| u \|^2.
		\end{aligned}
	\end{equation*}
	Note that for large enough $|b|$
	\begin{equation*}
		\begin{aligned}
			2 |\la|^2 (1 + 2 |\la|^{-2} + \eps^{-1} |\la|^{-2}) \| u' \|^2 &\le 2 |\la|^2 (1 + 2 |\la|^{-2} + \eps^{-1} |\la|^{-2}) \| u \| \| u'' \|\\
			&\le \eps \| u'' \|^2 + |\la|^4 C_\eps \| u \|^2
 		\end{aligned}
	\end{equation*}
	for some $C_\eps > 0$ independent of $\la$. Hence as $|b| \to +\infty$
	\begin{equation*}
		\begin{aligned}
			\| \opT(\la) u \|^2 &\ge (1 - 2 \eps) \| u'' \|^2 + (1 - 2 \eps - C'^2_1 \eps) \| q u \|^2\\
			&\quad + 4 |\la|^2 (1 - 2 c^2 |\la|^{-2} \eps^{-1} - \eps - C_1^2 \eps - c^2 |\la|^{-2} C_1^2) \| a u \|^2\\
			&\quad - |\la|^4 (2 \eps^{-1} + C_\eps - 1 + C'^2_1 |\la|^{-4} + 4 C_1^2 |\la|^{-2} + 4 C_1^2 c^2 |\la|^{-4}) \| u \|^2.
		\end{aligned}
	\end{equation*}
	Furthermore, since $c \in K$, a bounded subset of $\R$, we have $c \ls_K 1$ and $|\la| \approx |b|$ as $|b| \to +\infty$. Therefore, choosing an adequately small $\eps$, we obtain (with implicit constant independent of $\la$, see also Remark~\ref{rmk:b.asymptotic})
	\begin{equation*}
		\begin{aligned}
			\| \opT(\la) u \|^2 \gs \| u'' \|^2 + \| q u \|^2 + b^2 \| a u \|^2 - b^4 \| u \|^2, \quad |b| \to +\infty,
		\end{aligned}
	\end{equation*}
	which proves \eqref{eq:Tla.graphnorm}.
\end{proof}

Our main result in this section is an asymptotic estimate for $\| \opT(\la)^{-1} \|$ along vertical strips inside $\overline \C_{-}$.
\begin{theorem}
	\label{thm:resolvent.Tla}
	Let $a$ and $q$ satisfy Assumption~\ref{asm:a.q.dwe} and let $\opT(\la)$ be the family of operators \eqref{eq:Tla.def}-\eqref{eq:Tla.domain} for $\la := -c + i b$ as in the statement of Theorem~\ref{thm:resolvent.G}. Then
	\begin{equation}
		\label{eq:resnorm.Tla}
		\| \opT(\la)^{-1} \| = \| (\opA - c)^{-1} \| (2 |b|)^{-1} (1 + \BigO_K(|b|^{-1})), \quad |b| \to +\infty,
	\end{equation}
	with $\opA$ as in \eqref{eq:Airy.def}.
\end{theorem}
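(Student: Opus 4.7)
My plan follows the four-step roadmap sketched in the introduction, adapting the localisation technique of \cite{ArSi-resolvent-2022} to the quadratic family. Since the Fourier transform is unitary, I work throughout with $\widehat\opT(\la) := \sF \opT(\la) \sF^{-1} = \xi^2 + \widehat q + 2\la\widehat a + \la^2$ on $L^2(\R)$, where $\widehat a := \sF a \sF^{-1}$ and $\widehat q := \sF q \sF^{-1}$ are pseudo-differential operators. With $\la = -c + ib$, the ``potential'' $\xi^2 + \la^2 = \xi^2 - b^2 + c^2 - 2icb$ vanishes to leading order precisely at the two turning points $\xi = \pm\xi_b := \pm b$, which motivates splitting $\|\widehat\opT(\la) u\|/\|u\|$ according to whether $\supp u$ sits inside shrinking neighbourhoods $\Omega_{b,\pm}$ of these points or stays at a fixed distance from them.

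Away from the turning points (Proposition~\ref{prop:away.Tla}), on the complement of slightly larger neighbourhoods $\Omega'_{b,\pm}$ the real part of the symbol is of order $b^2$; a direct $L^2$ estimate pairing $\widehat\opT(\la) u$ with $u$ and exploiting non-negativity of $\widehat q$ and $\widehat a$ yields $\|\widehat\opT(\la) u\| \gtrsim b^2 \|u\|$, which is much stronger than the target rate. Near the turning points (Proposition~\ref{prop:local.Tla}), I Taylor-expand $\xi^2 = (\pm b + \eta)^2 = b^2 \pm 2b\eta + \eta^2$ with $\eta := \xi \mp b$, so that after cancelling $b^2$ against $\la^2 = -b^2 + c^2 - 2icb$ the dominant part of $\widehat\opT(\la)$ becomes $\pm 2b\eta + 2ib(\widehat a - c)$, i.e.\ $2ib$ times $(\widehat a - c) \mp i\eta$; each of these pieces is unitarily equivalent (via a translation of $\eta$ and, for the $-\xi_b$ piece, passage to the adjoint) to the Fourier conjugate of $\opA - c$, so its inverse has norm exactly $\|(\opA-c)^{-1}\|$. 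Norm-resolvent convergence of a suitably localised realisation of $\widehat\opT(\la)/(2b)$ to $\pm(\opA-c)$ then follows from the second resolvent identity together with the graph-norm estimate of Proposition~\ref{prop:Tla.graphnorm} and the analogous separation statement \eqref{eq:Abeta.graphnorm.realline.def} for $\opA$.

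The matching reverse inequality (Proposition~\ref{prop:lbound.Tla}) is a pseudomode construction: because $\opA$ has compact resolvent, $\|(\opA-c)^{-1}\|$ is attained by a singular vector $\phi_c$. I will translate the Fourier-space analogue of $\phi_c$ to one of the turning points, multiply by a smooth cut-off equal to one on a set where the Taylor approximation is accurate, and apply Lemma~\ref{lem:pdo.comp} to $F = a$ and $F = q$ (whose controlled-derivative hypotheses in Assumption~\ref{asm:a.q.dwe} deliver the symbol-class condition \eqref{eq:Vsymbolclass}) in order to control the commutators between the cut-off and the pseudo-differential parts of $\widehat\opT(\la)$. The family $u_b$ obtained this way satisfies $\|\widehat\opT(\la) u_b\| = 2|b|\,\|(\opA-c)^{-1}\|^{-1}(1 + \BigO_K(|b|^{-1}))\|u_b\|$, giving the lower bound on $\|\widehat\opT(\la)^{-1}\|$.

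Finally I glue the three regimes via a smooth partition of unity subordinate to $\{\Omega_{b,+},\Omega_{b,-},\R\setminus(\Omega'_{b,+}\cup\Omega'_{b,-})\}$, using Lemma~\ref{lem:pdo.comp} once more to dispose of the commutators between the cut-offs and $\widehat a$, $\widehat q$. The technical heart of the argument is step~(2): the widths of $\Omega_{b,\pm}$ must be taken large enough to accommodate the order-one eigenfunction of $\opA - c$ yet small enough that the quadratic remainder $\eta^2$ is $\BigO_K(|b|)$ relative to the leading linear term $2b\eta$, and simultaneously that the commutators $[\chi, \widehat a]$ and $[\chi, \widehat q]$ be of relative size $\BigO_K(|b|^{-1})$. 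Striking this balance, and propagating it through the partition-of-unity gluing without degrading the sharp constant $\|(\opA-c)^{-1}\|/(2|b|)$, will be the main obstacle.
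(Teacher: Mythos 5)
Your roadmap is the same as the paper's: pass to Fourier space, split at the turning points $\pm b$, prove an away-estimate of order $b^2$, approximate the localized operator by $\opA-c$ after dividing by $2\la$, obtain the matching lower bound by a pseudomode, and glue with a partition of unity and Lemma~\ref{lem:pdo.comp}. But the balancing act you defer as ``the main obstacle'' is precisely where your stated choices would not survive, so let me record how it is resolved. The windows cannot shrink: your own requirement that $2\la[\chi,\widehat a]$ be of relative size $\BigO_K(|b|^{-1})$ forces their width to be $\gtrsim |b|$ (the leading commutator term is $2\la\chi'\widehat a^{(1)}$ with $\|\chi'\|_\infty\approx(\text{width})^{-1}$), while invertibility of the localized model forces the width to be at most a small multiple of $|b|$; the paper takes $\Omega_{b,\pm}$ of width $4\delta_b=4\delta b$ with a fixed small $\delta=\delta(K)$, see \eqref{eq:deltab.def}. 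With that scaling the quadratic remainder $\eta^2/(2\la)$ is \emph{not} pointwise small of order $b^{-1}$ relative to the linear term; it is only $\BigO(\delta)$, cf.\ \eqref{eq:Rbhat.est}. The $\delta$-smallness is used for invertibility of the localized operator, and the sharp factor $1+\BigO_K(b^{-1})$ is recovered in the second-resolvent-identity estimate \eqref{eq:Sbhat.Sinfhat.est} by writing the remainder as $\xi\,\bigl[\xi^{-2}(\cdot)\bigr]\,\xi$ and letting the resolvents on either side each absorb one factor of $\xi$ through \eqref{eq:Sinfhatinv.graphnorm.est.1}. A related subtlety occurs in the gluing: the naive bound $\|\widehat a u\|\ls b^{-1}\|\widehat\opT(\la)u\|+b\|u\|$ from \eqref{eq:Tla.graphnorm} is too weak for the term $2\la\chi'\widehat a^{(1)}u$, and one must re-apply the away-estimate \eqref{eq:Tlahat.away.est.5} to $\chi'u$ (whose support avoids $\Omega'_{b,\pm}$), as in Lemma~\ref{lem:Tlahat.phibk.commutator}. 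Also, in your away step the real part of the symbol changes sign across $|\xi|=b$ and carries the term $-2c\langle\widehat a u,u\rangle$, so a pure quadratic-form argument must combine real and imaginary parts (or, as in the paper, work with $\|\widehat\opT(\la)u\|^2$ directly).

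For the lower bound there is a genuine gap in using the fixed singular vector $\phi_c$ of $\opA-c$: after translation, the remainder $(2\la)^{-1}\xi^2\chi_{\Omega_b}$ acting on $\widehat\phi_c$ is only $\BigO(\delta)$ unless $\widehat\phi_c\in\Dom(\xi^2)$, i.e.\ unless you first prove $W^{2,2}$-regularity (domain separation) for the minimiser of $(\opA-c)^*(\opA-c)$, which your plan does not address. The paper avoids this by taking $g_b$ to be the singular function of the localized operator $\widehat\opS_b-c$ itself and exploiting the eigenvalue equation for $((\widehat\opS_b^*-c)(\widehat\opS_b-c))^{-1}$ in Proposition~\ref{prop:lbound.Tla}; closeness of $\varsigma_b$ to $\|(\opA-c)^{-1}\|$ then comes directly from the norm-resolvent estimate of Proposition~\ref{prop:local.Tla}. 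Either route can work, but yours needs the extra regularity step spelled out.
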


\begin{remark}
	The conditions on $a$ in Assumption~\ref{asm:a.q.dwe} ensure that the generalised Airy operator $\opA$ in \eqref{eq:resnorm.Tla} satisfies the properties listed in Sub-section~\ref{ssec:Airy.prelim} (see \cite[Prop.~A.1, Prop.~A.2]{ArSi-resolvent-2022} for details).
\end{remark}

\begin{remark}
	\label{rmk:Tla.res.norm.uniform.bound}
	Since $\sigma(\opA) = \emptyset$, it follows that there exists $M_K > 0$ such that $\| (\opA - c)^{-1} \| \le M_K$ for all $c \in K$.
\end{remark}

Before proving Theorem~\ref{thm:resolvent.Tla}, we present some immediate consequences.
\begin{corollary}
	With $a$, $q$, $\la$ and $\opT(\la)$ as in Proposition~\ref{prop:Tla.graphnorm} and $\opH_q$ as in \eqref{eq:Hq.def}, then for $|b| \to +\infty$
	\begin{gather}
		\label{eq:Hq.Tlaminus1.norm}\| \opH_q \opT(\la)^{-1} \| + \| \opT(\la)^{-1} \opH_q \| \ls_K |b|,\\
		\label{eq:Hq12.Tlaminus1.norm}\| \opH^{\frac12}_q \opT(\la)^{-1} \| + \| \opT(\la)^{-1} \opH^{\frac12}_q \| \ls_K 1.
	\end{gather}
\end{corollary}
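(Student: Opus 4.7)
The plan is to mirror the strategy already used for the positive real parameter case in Proposition~\ref{prop:Tmu.estimates}, replacing the bound $\|\opT(\mu)^{-1}\|\le\mu^{-2}$ (which was the crucial quantitative input there) by the asymptotic equality \eqref{eq:resnorm.Tla} of Theorem~\ref{thm:resolvent.Tla}. Combining \eqref{eq:resnorm.Tla} with Remark~\ref{rmk:Tla.res.norm.uniform.bound} yields, as $|b|\to+\infty$,
\begin{equation*}
\|\opT(\la)^{-1}\|\ls_K |b|^{-1},
\end{equation*}
and this is the single new quantitative input that will feed into the rest of the argument.

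First I would prove $\|\opH_q\opT(\la)^{-1}\|\ls_K |b|$. For $v\in\Lt(\R)$ with $\|v\|\le1$, set $u:=\opT(\la)^{-1}v\in\Dom(\opT(\la))\subset\Dom(\opH_q)$ (the inclusion being valid under Assumption~\ref{asm:a.q.dwe} by Remark~\ref{rmk:domain.separation}). The graph-norm estimate \eqref{eq:Tla.graphnorm} of Proposition~\ref{prop:Tla.graphnorm} gives
\begin{equation*}
\|u''\|^2+\|qu\|^2\ls \|\opT(\la)u\|^2+b^4\|u\|^2 = \|v\|^2+b^4\|\opT(\la)^{-1}v\|^2\ls_K 1+b^2,\quad |b|\to+\infty,
\end{equation*}
so that $\|\opH_q u\|\le\|u''\|+\|qu\|\ls_K |b|$, which is the desired bound.

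Next I would handle the adjoint term $\opT(\la)^{-1}\opH_q$. Because $\opH_q$ is self-adjoint and $\opT(\la)^*=\opT(\overline\la)$, the relation $(AB)^*\supset B^*A^*$ applied to the bounded operator $\opH_q\opT(\overline\la)^{-1}$ shows that $\opT(\la)^{-1}\opH_q$ admits a bounded extension with the same norm. Since $\overline\la=-c-\ii b$ with the same $c\in K$ and $|{-b}|=|b|$, the bound from the first step applies and yields $\|\opT(\la)^{-1}\opH_q\|\ls_K |b|$, completing the proof of \eqref{eq:Hq.Tlaminus1.norm}.

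Finally, for \eqref{eq:Hq12.Tlaminus1.norm} I would use the interpolation trick from the proof of Proposition~\ref{prop:Tmu.estimates}. For $u:=\opT(\la)^{-1}v$ with $\|v\|\le 1$, the self-adjointness and non-negativity of $\opH_q$ give
\begin{equation*}
\|\opH_q^{\frac12}u\|^2=\langle\opH_q u,u\rangle\le\|\opH_q u\|\,\|u\|\ls_K |b|\cdot |b|^{-1}=1,\quad |b|\to+\infty,
\end{equation*}
whence $\|\opH_q^{\frac12}\opT(\la)^{-1}\|\ls_K 1$. The $\opT(\la)^{-1}\opH_q^{\frac12}$ term is again obtained by the adjoint argument applied to $\opH_q^{\frac12}\opT(\overline\la)^{-1}$. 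The only mild subtlety, and what I expect to be the principal point requiring verification, is the inclusion of domains needed to apply the adjoint identity cleanly and to justify $\opH_q^{\frac12}u$ being defined for $u\in\Dom(\opT(\la))$; both follow from $\Dom(\opT(\la))=\WttR\cap\Dom(a)\subset\Dom(\opH_q)\subset\Dom(\opH_q^{\frac12})$, available via Remark~\ref{rmk:domain.separation}.
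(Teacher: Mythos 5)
Your proposal is correct and follows essentially the same route as the paper: the graph-norm bound \eqref{eq:Tla.graphnorm} together with $\|\opT(\la)^{-1}\|\ls_K|b|^{-1}$ from \eqref{eq:resnorm.Tla} gives \eqref{eq:Hq.Tlaminus1.norm}, the bound $\|\opH_q^{\frac12}u\|^2\le\|\opH_q u\|\|u\|$ gives \eqref{eq:Hq12.Tlaminus1.norm}, and the adjoint identity with $\opT(\la)^*=\opT(\overline\la)$ handles the reversed factors, exactly as in the paper's proof (your domain remarks are the same ones the paper relies on implicitly).
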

\begin{proof}
	Let $u \in \Dom(\opT(\la)) \subset \Dom(\opH_q)$, then by \eqref{eq:Tla.graphnorm}
	\begin{equation*}
		\| \opH_q u \| \le \| u'' \| + \| q u \| \ls \| \opT(\la) u \| + b^2 \| u \|, \quad |b| \to +\infty.
	\end{equation*}
	Taking $u := \opT(\la)^{-1} v$, with $v \in \Lt(\R)$, $\| v \| \le 1$, we have
	\begin{equation*}
		\| \opH_q \opT(\la)^{-1} v \| \ls \| v \| + b^2 \| \opT(\la)^{-1} v \| \ls_K |b|, \quad |b| \to +\infty,
	\end{equation*}
	where we have used $\| \opT(\la)^{-1} \| \ls_K |b|^{-1}$ (see \eqref{eq:resnorm.Tla} and Remark~\ref{rmk:Tla.res.norm.uniform.bound}). This proves that $\| \opH_q \opT(\la)^{-1} \| \ls_K |b|$. A by now familiar use of adjoints and the fact that $\opT(\la)^* = \opT(\overline{\la})$ (see our observations in Sub-section~\ref{ssec:dwe.prelim}) yield \eqref{eq:Hq.Tlaminus1.norm}.
	
	Let $u \in \Dom(\opT(\la)) \subset \Dom(\opH_q)$, then applying once more \eqref{eq:Tla.graphnorm} we derive
	\begin{equation*}
		\begin{aligned}
			\| \opH_q^{\frac12} u \|^2 &= \langle \opH_q u, u \rangle \le \| \opH_q u \| \| u \| \le (\| u'' \| + \| q u \|) \| u \|\\
			&\ls (\| \opT(\la) u \| + b^2 \| u \|) \| u \|
		\end{aligned}
	\end{equation*}
	as $|b| \to +\infty$. It follows
	\begin{equation*}
		\| \opH_q^{\frac12} u \| \ls \| \opT(\la) u \|^{\frac12} \| u \|^{\frac12} + |b| \| u \| \ls \| \opT(\la) u \| + |b| \| u \|, \quad |b| \to +\infty,
	\end{equation*}
	and therefore arguing as above and applying \eqref{eq:resnorm.Tla}
	\begin{equation*}
		\| \opH_q^{\frac12} \opT(\la)^{-1} v \| \ls \| v \| + |b| \| \opT(\la)^{-1} v \| \ls_K 1, \quad |b| \to +\infty,
	\end{equation*}
	which, repeating previous arguments, proves \eqref{eq:Hq12.Tlaminus1.norm}.
\end{proof}

\subsection{Proof of Theorem~\ref{thm:resolvent.Tla}}
\label{ssec:proof.resolvent.Tla}
The strategy of the proof follows the template laid out in \cite[Sec.~4]{ArSi-resolvent-2022} to analyse the norm of the resolvent in the real axis for Schr\"{o}dinger operators with complex potentials. We firstly transform the problem to Fourier space. We then study the resolvent norm of the transformed operator in four steps: find an estimate away from the (asymptotic) zeroes of its potential function (i.e. the non-pseudo-differential term), find a local estimate near the zeroes, find a lower bound for the norm and, finally, combine the previous results to prove the theorem.

To this end, let us introduce the operators in $L^2(\R)$
\begin{equation}
	\label{eq:Tlahat.def}
	\begin{aligned}
		\widehat \opT(\la) &:=  \sF \opT(\la) \sF^{-1}, &\Dom(\widehat \opT(\la)) &:= \{u \in \Lt(\R): \quad \check{u} \in \Dom(\opT(\la))\},\\
		\widehat{a} &:= \sF a \sF^{-1}, &\Dom(\widehat{a}) &:= \{u \in \Lt(\R): \quad \check{u} \in \Dom(a)\},\\
		\widehat{q} &:= \sF q \sF^{-1}, &\Dom(\widehat{q}) &:= \{u \in \Lt(\R): \quad \check{u} \in \Dom(q)\}.
	\end{aligned}
\end{equation}
Notice that $\widehat \opT(\la) = \widehat{q} + 2 \la \widehat{a} + \xi^2 + \la^2$ and, with $\la$ as in the statement of Theorem~\ref{thm:resolvent.Tla}, we have $\xi^2 + \la^2 = \xi^2 - b^2 (1 + \BigO(|b|^{-1}))$ as $|b| \to +\infty$.

Furthermore, since $\opT(\overline{\la}) = \opT(\la)^*$ (refer to our remarks in Sub-section~\ref{ssec:dwe.prelim}), it is enough to prove the theorem for $b \to +\infty$ and we will therefore assume $b > 0$ in the rest of the section. Let
\begin{equation}
	\label{eq:deltab.def}
	\Omega'_{b,\pm} := ( \pm \xi_b - \delta_b,  \pm \xi_b + \delta_b ), \quad \xi_b := b, \quad \delta_b := \delta  \xi_b, \quad 0 < \delta < \frac14,
\end{equation}
where the parameter $\delta = \delta(K)$ will be specified in Proposition~\ref{prop:local.Tla}.

\subsubsection{Step 1: estimate outside the neighbourhoods of $\pm \xi_b$}

\begin{proposition}
	\label{prop:away.Tla}
	Let $\Omega'_{b,\pm}$ be defined by~\eqref{eq:deltab.def}, let the assumptions of Theorem~\ref{thm:resolvent.Tla} hold and let $\widehat \opT(\la)$ be as in \eqref{eq:Tlahat.def}. Then as $b \to +\infty$
	\begin{equation}
		\label{eq:away.est}
		b^2 \ls_\delta \inf \left\{ \frac{\| \widehat \opT(\la)  u \|}{\|u\|} : \; 0 \neq u \in \Dom(\widehat \opT(\la)), \; \supp u \cap (\Omega'_{b,+} \cup \Omega'_{b,-}) = \emptyset \right\}.
	\end{equation}
\end{proposition}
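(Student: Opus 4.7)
The plan is to combine a pointwise lower bound on the multiplication part $\xi^2 + \la^2$ of $\widehat\opT(\la)$ with the triangle inequality and the graph norm estimate of Proposition~\ref{prop:Tla.graphnorm} to control the pseudo-differential remainder $\widehat q + 2\la \widehat a$.

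First, I would establish the pointwise lower bound on $\supp u$. Since $u$ vanishes on $\Omega'_{b,\pm}$, every $\xi \in \supp u$ satisfies $|\xi \mp b| \ge \delta b$; combined with $|\xi^2 - b^2| = |\xi - b| |\xi + b|$ and a short case distinction between $|\xi| \le b(1-\delta)$ and $|\xi| \ge b(1+\delta)$, this yields
\begin{equation*}
|\xi^2 - b^2| \ge \delta(2 - \delta)\, b^2.
\end{equation*}
Splitting $\xi^2 + \la^2 = (\xi^2 - b^2 + c^2) - 2 i c b$ and using that $c$ ranges over the bounded set $K$, for $b$ large enough (depending on $\delta$ and $K$) the bounded term $c^2$ is absorbed and one obtains $|\xi^2 + \la^2| \gtrsim_\delta b^2$ pointwise on $\supp u$; integrating gives $\|(\xi^2 + \la^2) u\| \gtrsim_\delta b^2 \|u\|$.

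Next, writing $\widehat\opT(\la) u = (\xi^2 + \la^2) u + (\widehat q + 2\la \widehat a) u$ and applying the triangle inequality,
\begin{equation*}
\|\widehat\opT(\la) u\| \ge \|(\xi^2 + \la^2) u\| - \|(\widehat q + 2\la \widehat a) u\|.
\end{equation*}
The remainder is controlled via Plancherel's identity together with the graph norm estimate of Proposition~\ref{prop:Tla.graphnorm}: since $\|(\widehat q + 2\la \widehat a) u\| = \|(q + 2\la a) \check u\| \le \|q \check u\| + 2 |\la| \|a \check u\|$ and $|\la| \lesssim b$ as $|b| \to +\infty$, estimate \eqref{eq:Tla.graphnorm} gives
\begin{equation*}
\|(\widehat q + 2\la \widehat a) u\| \lesssim \|\opT(\la) \check u\| + b^2 \|\check u\| = \|\widehat\opT(\la) u\| + b^2 \|u\|.
\end{equation*}

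Substituting the last display into the triangle inequality gives an inequality of the form $\|\widehat\opT(\la) u\| \ge C_\delta b^2 \|u\| - C_0 \bigl(\|\widehat\opT(\la) u\| + b^2 \|u\|\bigr)$, and absorbing $\|\widehat\opT(\la) u\|$ onto the left-hand side yields $\|\widehat\opT(\la) u\| \gtrsim_\delta b^2 \|u\|$, as claimed. The main technical point is this absorption step: the $\delta$-dependent constant $C_\delta$ coming from the pointwise bound must dominate the (fixed) constant $C_0$ from the graph norm estimate, a compatibility constraint which is consistent with the choice $\delta = \delta(K)$ to be fixed in Proposition~\ref{prop:local.Tla} and which is absorbed into the implicit constant $\lesssim_\delta$ in~\eqref{eq:away.est}.
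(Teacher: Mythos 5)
There is a genuine gap in the final absorption step, and it is not the benign ``compatibility constraint'' you describe. Your remainder bound, obtained by applying \eqref{eq:Tla.graphnorm} as a black box, reads $\|(\widehat q + 2\la\widehat a)u\| \le C_0\|\widehat\opT(\la)u\| + C_0' b^2\|u\|$ with constants $C_0, C_0'$ coming from Proposition~\ref{prop:Tla.graphnorm}; these are \emph{fixed} and, crucially, not small (in the proof of \eqref{eq:Tla.graphnorm} the coefficient of $b^4\|u\|^2$ is of the form $2\eps^{-1}+C_\eps-1+\dots$ with $\eps$ small, so the resulting constant is generically much larger than $1$, and in any case not at your disposal). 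On the other hand your pointwise bound gives $\|(\xi^2+\la^2)u\| \ge C_\delta b^2\|u\|$ with $C_\delta \le \delta(2-\delta) < 7/16$, since $\delta < 1/4$ by \eqref{eq:deltab.def}, and $\delta$ is forced to be \emph{small} by Proposition~\ref{prop:local.Tla} (it must make a Neumann-series perturbation small there), which makes $C_\delta$ smaller still — so the constraint $C_\delta > C_0'$ needed to conclude $\|\widehat\opT(\la)u\| \gtrsim_\delta b^2\|u\|$ from $\|\widehat\opT(\la)u\| \ge C_\delta b^2\|u\| - C_0\|\widehat\opT(\la)u\| - C_0' b^2\|u\|$ points exactly the wrong way and will in general fail, leaving a vacuous (negative) right-hand side. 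The triangle inequality plus the graph-norm estimate simply cannot see why the multiplication part $\xi^2+\la^2$ and the pseudo-differential part $\widehat q + 2\la\widehat a$ do not partially cancel.

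The paper's proof avoids this competition of constants altogether: it expands $\|\widehat\opT(\la)u\|^2$ and estimates the cross term $2\Re\langle(\widehat q+2\la\widehat a)u,(\xi^2+\la^2)u\rangle$ directly, using an integration by parts to rewrite $b\,\Im\langle\widehat a u,\xi^2 u\rangle$ via $\langle a'\check u,\check u'\rangle$ and Assumption~\ref{asm:a.q.dwe}~\ref{itm:a.symbolclass}; the dangerous contributions then carry prefactors $b^{-1}+\eps^{-1}b^{-2}c^2 = o(1)$ in front of $\|\xi^2 u\|^2 + b^4\|u\|^2$, so they are absorbed for large $b$ by the support-localised lower bound $\|(\xi^2+\la^2)u\|^2 \ge C''_{5,\delta}(\|\xi^2u\|^2+b^4\|u\|^2)$ \emph{no matter how small} $\delta$ (hence $C''_{5,\delta}$) is. Note also that the paper's argument yields the stronger estimate \eqref{eq:Tlahat.away.est.5}, including the terms $b^2\|\widehat a u\|^2$ and $\|\xi^2 u\|^2$, which is what is actually invoked later in Lemma~\ref{lem:Tlahat.phibk.commutator}; your weaker conclusion, even if repaired, would not serve that later purpose. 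To fix your argument you would have to replace the black-box use of \eqref{eq:Tla.graphnorm} by a cross-term (or commutator) analysis of the above type, i.e.\ essentially reproduce the paper's computation.
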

\begin{proof}
	Let $0 \ne u \in \Dom(\widehat \opT(\la))$ with $\supp u \cap (\Omega'_{b,+} \cup \Omega'_{b,-}) = \emptyset$ and consider
	\begin{equation}
		\label{eq:Tlahat.away.est.0}
		\begin{aligned}
			\| \widehat \opT(\la) u \|^2 &= \| (\widehat{q} + 2 \la \widehat{a}) u \|^2 + \| (\xi^2 + \la^2)  u \|^2 + 2 \Re \langle (\widehat{q} + 2 \la \widehat{a}) u, (\xi^2 + \la^2) u \rangle\\
			&\ge \| (\widehat{q} + 2 \la \widehat{a}) u \|^2 + \| (\xi^2 + \la^2)  u \|^2 - 2 |\Re \langle (\widehat{q} + 2 \la \widehat{a}) u, (\xi^2 + \la^2) u \rangle|.
		\end{aligned}
	\end{equation}

	Note that
	\begin{equation*}
		\begin{aligned}
			\| (\widehat{q} + 2 \la \widehat{a}) u \|^2 &= \| \widehat{q} u \|^2 + 4 |\la|^2 \| \widehat{a} u \|^2 + 2 \Re \langle \widehat{q} u, 2 \la \widehat{a} u \rangle\\
			&= \| \widehat{q} u \|^2 + 4 |\la|^2 \| \widehat{a} u \|^2 - 2 \langle \widehat{q} u, 2 c \widehat{a} u \rangle\\
			&\ge \| \widehat{q} u \|^2 + 4 |\la|^2 \| \widehat{a} u \|^2 - 2 \| \widehat{q} u \| \| 2 c \widehat{a} u \|\\
			&\ge \frac12 \| \widehat{q} u \|^2 + 4 b^2 (|\la|^2 b^{-2} - 2 c^2 b^{-2}) \| \widehat{a} u \|^2.
		\end{aligned}
	\end{equation*}
	Hence we can find $C''_1 > 0$ such that
	\begin{equation}
		\label{eq:Tlahat.away.est.1}
		\| \widehat{q} u + 2 \la \widehat{a} u \|^2 \ge C''_1 \left(\| \widehat{q} u \|^2 + b^2 \| \widehat{a} u \|^2\right), \quad b \to +\infty.
	\end{equation}
	
	Next we estimate the third term in the right-hand side of \eqref{eq:Tlahat.away.est.0}. For any arbitrarily small $\eps > 0$
	\begin{equation*}
		\begin{aligned}
			2 |\Re \langle \widehat{q} u, (\xi^2 + \la^2) u \rangle| &\le 2 \| \widehat{q} u \| \| (\xi^2 + \la^2) u\| \le \eps^{-1} \| \widehat{q} u \|^2 + \eps \| (\xi^2 + \la^2) u\|^2,\\
			2 |\Re \langle 2 \la \widehat{a} u, (\xi^2 + \la^2) u \rangle| &\le 4 |\Re \la \langle \widehat{a} u, \xi^2 u \rangle| + 4 c |\la|^2 \langle \widehat{a} u, u \rangle\\
			&\le 4 c |\Re \langle \widehat{a} u, \xi^2 u \rangle| + 4 b |\Im \langle \widehat{a} u, \xi^2 u \rangle| + 4 c |\la|^2 \langle \widehat{a} u, u \rangle\\
			&\le 2 \| \widehat{a} u \| \| 2 c \xi^2 u \| + 4 b |\Im \langle a' \check{u}, \check{u}' \rangle| + 2 |\la|^2 \| \widehat{a} u \| \| 2 c u \|\\
			&\le \eps b^2 \| \widehat{a} u \|^2 + \eps^{-1} b^{-2} 4 c^2 \| \xi^2 u \|^2 + 4 b |\Im \langle a' \check{u}, \check{u}' \rangle|\\
			&\quad + |\la|^2 (\eps \| \widehat{a} u \| + \eps^{-1} 4 c^2 \| u \|^2).
		\end{aligned}
	\end{equation*}
	Applying Assumption~\ref{asm:a.q.dwe}~\ref{itm:a.symbolclass} with $n = 1$, we obtain
	\begin{equation*}
		\begin{aligned}
			4 b |\Im \langle a' \check{u}, \check{u}' \rangle| &\le 4 b C_1 \| (1 + \widehat{a}) u \| \| \xi u \| \le 2 b C_1 (2 \| u \| \| \xi u \| + 2 \| \widehat{a} u \| \| \xi u \|)\\
			&\le 2 b C_1 \left(\| u \|^2 + \| \widehat{a} u \|^2 + 2 \| \xi u \|^2\right)\\
			&\le 2 b C_1 \left(\| u \|^2 + \| \widehat{a} u \|^2 + 2 \| \xi^2 u \| \| u \|\right)\\
			&\le 2 b C_1 \left(\| u \|^2 + \| \widehat{a} u \|^2 + b^{-2} \| \xi^2 u \|^2 + b^2 \| u \|^2\right).
		\end{aligned}
	\end{equation*}
	Hence there exists $C''_2 > 0$ such that as $b \to +\infty$
	\begin{equation*}
		\begin{aligned}
			2 |\Re \langle 2 \la \widehat{a} u, (\xi^2 + \la^2) u \rangle| &\le C''_2 (\eps b^2 \| \widehat{a} u \|^2 + (b^{-1} + \eps^{-1} b^{-2} c^2) \| \xi^2 u \|^2\\
			&\hspace{0.4in} + b^4 (b^{-1} + \eps^{-1} b^{-2} c^2) \| u \|^2),
		\end{aligned}
	\end{equation*}
	and consequently as $b \to +\infty$
	\begin{equation}
		\label{eq:Tlahat.away.est.2}
		\begin{aligned}
			2 |\Re \langle (\widehat{q} + 2 \la \widehat{a}) u, (\xi^2 + \la^2) u \rangle| &\le \eps^{-1} \| \widehat{q} u \|^2 + \eps \| (\xi^2 + \la^2) u\|^2\\
			&\quad + C''_2 (\eps b^2 \| \widehat{a} u \|^2 + (b^{-1} + \eps^{-1} b^{-2} c^2) \| \xi^2 u \|^2\\
			&\hspace{0.5in} + b^4 (b^{-1} + \eps^{-1} b^{-2} c^2) \| u \|^2).
		\end{aligned}
	\end{equation}

	Substituting \eqref{eq:Tlahat.away.est.1} and \eqref{eq:Tlahat.away.est.2} in \eqref{eq:Tlahat.away.est.0}, we have as $b \to +\infty$
	\begin{equation*}
		\begin{aligned}
			\| \widehat \opT(\la) u \|^2 &\ge (C''_1 - \eps^{-1}) \| \widehat{q} u \|^2 + b^2 (C''_1 - C''_2 \eps) \| \widehat{a} u \|^2 + (1 - \eps) \| (\xi^2 + \la^2) u\|^2\\
			&\quad - C''_2 (b^{-1} + \eps^{-1} b^{-2} c^2) (\| \xi^2 u \|^2 + b^4 \| u \|^2)
		\end{aligned}
	\end{equation*}
	and therefore, choosing a small enough $\eps$, we can find $C''_3, C''_4 > 0$ such that
	\begin{equation}
		\label{eq:Tlahat.away.est.3}
		\begin{aligned}
			\| \widehat \opT(\la) u \|^2 &\ge C''_3 (b^2 \| \widehat{a} u \|^2 + \| (\xi^2 + \la^2) u\|^2)\\
			&\quad - C''_4 (\| \widehat{q} u \|^2 + (b^{-1} + b^{-2} c^2) (\| \xi^2 u \|^2 + b^4 \| u \|^2)), \quad b \to +\infty.
		\end{aligned}
	\end{equation}

	Finally, with $\xi^2 + \la^2 = \xi^2 - b^2 + c^2 - 2 i c b$, we consider the term
	\begin{equation*}
		\begin{aligned}
			\| (\xi^2 + \la^2) u\|^2 &= \| (\xi^2 - b^2) u\|^2 + c^2 (c^2 + 4 b^2) \| u \|^2 + 2 c^2 \langle (\xi^2 - b^2) u, u \rangle\\
			&\ge \| (\xi^2 - b^2) u\|^2 + 4 c^2 b^2 \| u \|^2 - 2 \| (\sqrt{2})^{-1} (\xi^2 - b^2) u \| \| \sqrt{2} c^2 u \| \\
			&\ge \frac12 \| (\xi^2 - b^2) u\|^2 + 2 c^2 b^2 (2 - c^2 b^{-2}) \| u \|^2
		\end{aligned}
	\end{equation*}
	and hence
	\begin{equation*}
		\| (\xi^2 + \la^2) u\|^2 \ge \frac12 \| (\xi^2 - b^2) u\|^2, \quad b \to +\infty.
	\end{equation*}
	Furthermore, there exists $C_\delta' > 0$, depending on $\delta$, such that for any $\xi \in \supp u$
	\begin{gather*}
		|\xi^2 - b^2| = |\xi + \xi_b| |\xi - \xi_b| \ge \delta_b^2 = \delta^2 b^2,\\
		|\xi| \le |\xi \pm \xi_b| + \xi_b \le (1 + 1/\delta) |\xi \pm \xi_b| \implies |\xi^2 - b^2| \ge C_\delta' \xi^2.
	\end{gather*}
	Consequently there exists $C''_{5,\delta} > 0$ such that for $b \to +\infty$
	\begin{equation}
		\label{eq:Tlahat.away.est.4}
		\| (\xi^2 + \la^2) u\|^2 \ge \frac14 \| (\xi^2 - b^2) u\|^2 + \frac14 \| (\xi^2 - b^2) u\|^2 \ge C''_{5,\delta} (\| \xi^2 u \|^2 + b^4 \| u \|^2).
	\end{equation}

	Replacing \eqref{eq:Tlahat.away.est.4} in \eqref{eq:Tlahat.away.est.3}, we deduce that there exists $C''_{6,\delta} > 0$ such that
	\begin{equation*}
		\begin{aligned}
			\| \widehat \opT(\la) u \|^2 &\ge C''_{6,\delta} (b^2 \| \widehat{a} u \|^2 + \| \xi^2 u \|^2 + b^4 \| u \|^2)\\
			&\quad - C''_4 (\| \widehat{q} u \|^2 + (b^{-1} + b^{-2} c^2) (\| \xi^2 u \|^2 + b^4 \| u \|^2)), \quad b \to +\infty,
		\end{aligned}
	\end{equation*}
	and, noting that $\| \widehat{q} u \|^2 \ls \| \widehat{a} u \|^2 + \| u \|^2$ (see Assumption~\ref{asm:a.q.dwe}~\ref{itm:a.gt.q}) and $b^{-2} c^2 \to 0$ as $b \to +\infty$ for $c \in K$, we conclude that there exists $C'_{\delta} > 0$ such that
	\begin{equation}
		\label{eq:Tlahat.away.est.5}
		\| \widehat \opT(\la) u \|^2 \ge C'_{\delta} (b^2 \| \widehat{a} u \|^2 + \| \xi^2 u \|^2 + b^4 \| u \|^2), \quad b \to +\infty,
	\end{equation}
	which proves the claim.
\end{proof}

\subsubsection{Step 2: estimate near $\pm \xi_b$}

\begin{proposition}
	\label{prop:local.Tla}
	Define 
	\begin{equation}
		\label{eq:Omega.def}
		\Omega_{b,\pm} := \left( \pm \xi_b - 2 \delta_b, \pm \xi_b + 2 \delta_b \right), 
	\end{equation}
	with $\xi_b$, $\delta_b$ as in \eqref{eq:deltab.def}. Let the assumptions of Theorem~\ref{thm:resolvent.Tla} hold and let $\widehat \opT(\la)$ and $\opA$ be as in \eqref{eq:Tlahat.def} and \eqref{eq:Airy.def}, respectively. Then as $b \to + \infty$
	\begin{equation}
		\label{eq:local.est}
		\begin{aligned}
			&\| (\opA - c)^{-1} \|^{-1} 2 b (1 - \BigO_K(b^{-1}))\\
			&\qquad \qquad \le \inf \left\{ \frac{\| \widehat \opT(\la) u \|}{\|u\|} : \; 0 \neq u \in \Dom(\widehat \opT(\la)), \, \supp u \subset \Omega_{b,\pm} \right\}.
		\end{aligned}
	\end{equation}
\end{proposition}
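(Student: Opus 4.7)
The plan is to reduce the problem via a unitary translation in Fourier space to a perturbation of the shifted Airy operator $\widehat{\opA}-c$ and then control the remainder via the graph-norm separation~\eqref{eq:Abeta.graphnorm.realline.def}. Using $\opT(\la)^* = \opT(\overline{\la})$, it suffices to treat $\supp u \subset \Omega_{b,+}$. Introduce the unitary translation $(\tau_b v)(\xi) := v(\xi - b)$ and write $u = \tau_b v$, so $\supp v \subset (-2\delta_b, 2\delta_b)$. Since $\widehat{q}$ and $\widehat{a}$ are convolutions (hence translation-invariant) while $\tau_b^{-1}\xi^2\tau_b = (\xi + b)^2$,
\begin{equation*}
    \widetilde{\opT}(\la) := \tau_b^{-1}\widehat{\opT}(\la)\tau_b = \widehat{q} + 2\la\widehat{a} + (\xi + b)^2 + \la^2.
\end{equation*}
Using $b^2 + \la^2 = c^2 - 2ibc$ and $2\la = 2bi - 2c$, Taylor-expand $(\xi + b)^2 + \la^2 = 2b(\xi - ic) + \xi^2 + c^2$ and regroup to obtain
\begin{equation*}
    \widetilde{\opT}(\la) = 2bi(\widehat{\opA} - c) + R, \qquad R := \widehat{q} - 2c\widehat{a} + \xi^2 + c^2,
\end{equation*}
where $\widehat{\opA} = \sF\opA\sF^{-1} = -i\xi + \widehat{a}$, and $\|(\widehat{\opA} - c)^{-1}\| = \|(\opA - c)^{-1}\|$ by unitary equivalence.

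Next I would apply $\|\widetilde{\opT}(\la) v\| \geq 2b\|(\widehat{\opA} - c) v\| - \|Rv\|$ and bound $\|Rv\|$ by exploiting~\eqref{eq:Abeta.graphnorm.realline.def} transplanted to Fourier space, which yields $\|\xi v\| + \|\widehat{a} v\| \lesssim \|\widehat{\opA} v\| + \|v\| \le \|(\widehat{\opA} - c) v\| + (1 + c)\|v\|$. Assumption~\ref{asm:a.q.dwe}~\ref{itm:a.gt.q} then gives $\|\widehat{q} v\| \lesssim \|\widehat{a} v\| + \|v\|$. The only delicate term is $\xi^2 v$; since $|\xi| \leq 2\delta_b$ on $\supp v$, I would factor
\begin{equation*}
    \|\xi^2 v\| \leq 2\delta_b \|\xi v\| \lesssim \delta b\bigl(\|(\widehat{\opA} - c) v\| + (1 + c)\|v\|\bigr),
\end{equation*}
which gives $\|Rv\| \lesssim_K (1 + \delta b)\|(\widehat{\opA} - c) v\| + (1 + \delta b)\|v\|$. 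As signposted in the introduction, a rigorous packaging of this comparison is a norm-resolvent convergence of a localised realisation of $(2bi)^{-1}\widehat{\opT}(\la)$ to $\widehat{\opA} - c$ via the second resolvent identity, the remainder estimates above playing the role of the perturbative input.

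Finally, combining the triangle inequality with $\|(\widehat{\opA} - c) v\| \geq \|(\opA - c)^{-1}\|^{-1}\|v\|$ and the uniform bound of Remark~\ref{rmk:Tla.res.norm.uniform.bound}, fixing $\delta = \delta(K)$ small enough that the $\delta b$-coefficient of $\|(\widehat{\opA} - c) v\|$ is absorbed into the leading $2b$ leaves an additive $O_K(1)$ correction; dividing by $\|v\|$ and comparing with $2b\|(\opA - c)^{-1}\|^{-1}$ yields the $(1 - \BigO_K(b^{-1}))$ factor claimed in~\eqref{eq:local.est}. The principal obstacle is the treatment of $\|\xi^2 v\|$: the naive bound $\|\xi^2 v\| \leq (2\delta b)^2\|v\|$ is off by a factor of $b$, and it is the split $\|\xi^2 v\| \leq 2\delta_b\|\xi v\|$ combined with the Airy graph-norm estimate (trading one power of $\xi$ for $\|(\widehat{\opA} - c) v\|$) that recovers the correct dependence on $b$, at the price of requiring $\delta(K)$ small relative to the uniform upper bound of $\|(\opA - c)^{-1}\|$ on $K$.
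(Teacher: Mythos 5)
Your reduction is the same in spirit as the paper's (translate by $\xi_b$ in Fourier space, compare with the shifted Airy operator, use the graph-norm separation and Assumption~\ref{asm:a.q.dwe}~\ref{itm:a.gt.q} for the lower-order terms), and the algebra $\widetilde{\opT}(\la)=2bi(\widehat\opA-c)+R$ is correct. The gap is quantitative, and it is exactly at the point you flag as "the only delicate term". Your bound $\|\xi^2 v\|\le 2\delta_b\|\xi v\|\ls \delta b\bigl(\|(\widehat\opA-c)v\|+\langle c\rangle\|v\|\bigr)$ produces, after the triangle inequality, error terms of size $\delta b\,\|(\widehat\opA-c)v\|$ and $\delta b\,\|v\|$. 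Relative to the main term $2b\|(\widehat\opA-c)v\|\ge 2b\|(\opA-c)^{-1}\|^{-1}\|v\|$, these are of relative size $\BigO_K(\delta)$ with $\delta$ a \emph{fixed} constant, not $\BigO_K(b^{-1})$; your closing claim that absorbing the $\delta b$-coefficient "leaves an additive $O_K(1)$ correction" is not right -- the leftover is $\BigO_K(\delta b)\|v\|$. So the argument as written only yields
\begin{equation*}
	\inf\frac{\|\widehat\opT(\la)u\|}{\|u\|}\;\ge\;\|(\opA-c)^{-1}\|^{-1}\,2b\,\bigl(1-\BigO_K(\delta)-\BigO_K(b^{-1})\bigr),
\end{equation*}
which is an order-of-magnitude bound but not the statement \eqref{eq:local.est}. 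The precision $1-\BigO_K(b^{-1})$ is essential: in Theorem~\ref{thm:resolvent.Tla} it must match, to the same accuracy, the upper bound on the infimum coming from the test functions of Proposition~\ref{prop:lbound.Tla}, and a loss of a fixed factor $1-\BigO_K(\delta)$ destroys the two-sided squeeze giving \eqref{eq:resnorm.Tla}.

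The mechanism the paper uses to avoid this loss is absent from your proposal. One cannot trade the two powers of $\xi$ in the quadratic remainder against a single application of the graph norm; instead the paper (i) cuts off the quadratic term, replacing $\xi^2+\la^2$ by $\widetilde V_b$ containing $(\xi-\xi_b)^2\chi_{\Omega_{b,+}}$, which defines a \emph{global} comparison operator $\widetilde\opT(\la)$ agreeing with $\widehat\opT(\la)$ on functions supported in $\Omega_{b,+}$; (ii) uses smallness of $\delta(K)$ only to invert the factor $\opI+(2\la)^{-1}\widehat q(\widehat\opS_\infty-c)^{-1}+\widehat R_b(\widehat\opS_\infty-c)^{-1}$, i.e.\ to get uniform two-sided bounds; and (iii) obtains the sharp rate from the second resolvent identity, where the cutoff quadratic remainder is written as $(\widehat\opS_b-c)^{-1}\xi\cdot\xi^{-2}\bigl(\widehat R_b+\tfrac{c^2}{2\la}+i\tfrac{c}{\la}\xi\bigr)\cdot\xi(\widehat\opS_\infty-c)^{-1}$, so that \emph{both} powers of $\xi$ are absorbed by resolvents on either side and the surviving smallness is $\|\xi^{-2}(\cdot)\|_\infty\le(2b)^{-1}$, giving $\BigO_K(b^{-1})$. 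You gesture at a norm-resolvent-convergence packaging via the second resolvent identity, but the one-sided remainder estimates you derived are not the right perturbative input for it, and without the $\chi_{\Omega_b}$ cutoff the unlocalised $(2\la)^{-1}\xi^2$ is not even a globally small perturbation of $\widehat\opA-c$. To repair the proof you need the cutoff comparison operator and the symmetric resolvent sandwich (or an equivalent device that extracts a factor $b^{-1}$ rather than $\delta$ from the quadratic term).
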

\begin{proof}
	We shall derive estimate \eqref{eq:local.est} for $u$ such that $\supp u \subset \Omega_{b,+}$. The procedure when $\supp u \subset \Omega_{b,-}$ is similar (see our remarks at the end of the proof).

	Writing $\xi^2 - b^2 = 2 \xi_b (\xi - \xi_b) + (\xi - \xi_b)^2$, we introduce
	\begin{equation*}
		\widetilde V_b(\xi) := c^2 - 2 i c \xi_b + 2  \xi_b (\xi - \xi_b) + (\xi - \xi_b)^2 \chi_{\Omega_{b,+}}(\xi), \quad \xi \in \R.
	\end{equation*}
	With $\widehat{q}$ and $\widehat{a}$ as in \eqref{eq:Tlahat.def}, let us define the following operator in $\Lt(\R)$
	\begin{equation*}
		\widetilde \opT(\la) = \widehat{q} + 2 \la \widehat{a} + \widetilde V_b(\xi), \quad \Dom(\widetilde \opT(\la)) = \left\{ u \in \Lt(\R) \, : \, \check{u} \in W^{1,2}(\R) \cap \Dom(a) \right\}.
	\end{equation*}

	We define a translation on $L^2(\R)$ by
	\begin{equation*}
		(\opU_b u)(\xi) := u(\xi + \xi_b), \quad \xi \in \R.
	\end{equation*}
	Then, setting $\Omega_b := (-2 \delta_b, 2 \delta_b)$, we have
	\begin{equation}
		\label{eq:U.That.Uminus1}
		\begin{aligned}
			\frac1{2 \la}\opU_b \widetilde \opT(\la) \opU_b^{-1} &= \widehat{a} + \frac1{2 \la} \widehat{q} + \frac{c^2 - 2 i c \xi_b}{2 \la} + \frac{\xi_b}{\la} \xi + \frac1{2 \la} \xi^2 \chi_{\Omega_b}\\
			&= \widehat{a} - i \xi - c + \frac1{2 \la} \widehat{q} + \frac{c^2 - 2 i c \xi_b + 2 c \la}{2 \la} + \frac{\xi_b + i \la}{\la} \xi + \frac1{2 \la} \xi^2 \chi_{\Omega_b}\\
			&= \widehat{a} - i \xi - c + \frac1{2 \la} \widehat{q} - \frac{c^2}{2 \la} - i \frac{c}{\la} \xi + \frac1{2 \la} \xi^2 \chi_{\Omega_b}\\
			&= \widehat{a} - i \xi - c + \frac1{2 \la} \widehat{q} + \widehat R_b (\xi),
		\end{aligned}
	\end{equation}
	with
	\begin{equation}
		\label{eq:Rbhat.def}
		\widehat R_b (\xi) := -\frac{c^2}{2 \la} - i \frac{c}{\la} \xi + \frac1{2 \la} \xi^2 \chi_{\Omega_b}(\xi), \quad \xi \in \R.
	\end{equation}
	From \eqref{eq:deltab.def}, we have $\delta_b = \delta \xi_b = \delta b$ and, since $b \le |\la|$, we find
	\begin{equation}
		\label{eq:Rbhat.est}
		\begin{aligned}
			&\| \xi^{-1} (\widehat R_b + \frac{c^2}{2 \la} + i \frac{c}{\la} \xi) \|_\infty = \frac{\| \xi \chi_{\Omega_b} \|_\infty}{2 |\la|}  \le \delta,\\
			&\| \xi^{-2} (\widehat R_b + \frac{c^2}{2 \la} + i \frac{c}{\la} \xi) \|_\infty \le \frac1{2 |\la|} \le \frac1{2 b}.
		\end{aligned}
	\end{equation}

	Using \eqref{eq:U.That.Uminus1} and letting
	\begin{align}
		\label{eq:Sinfhat.def}\widehat \opS_{\infty} &:= \sF \opA \sF^{-1} = \widehat a - i \xi, \quad \Dom(\widehat \opS_{\infty}) = \left\{u \in \Lt(\R) : \, \check{u} \in \Dom(\opA)\right\},\\
		\label{eq:Sbhat.def}\widehat \opS_b &:= \frac1{2 \la}\opU_b \widetilde \opT(\la) \opU_b^{-1} + c = \widehat \opS_{\infty} + \frac1{2 \la} \widehat q + \widehat R_b, \quad \Dom(\widehat \opS_b) = \Dom(\widehat \opS_{\infty}),
	\end{align}
	our next aim is to show that the operator $\widehat \opS_b - c$ converges to $\widehat \opS_\infty - c$ in the norm resolvent sense as $b \to +\infty$.
	
	The spectrum of $\opA$, and hence that of $\widehat \opS_{\infty}$, is empty (refer to Sub-section~\ref{ssec:Airy.prelim}) and therefore $\| (\widehat \opS_{\infty} - c)^{-1} \| \ls_K 1$ for all $c \in K$ (see Remark~\ref{rmk:Tla.res.norm.uniform.bound}). Moreover, using standard arguments, the graph-norm inequalities \eqref{eq:Abeta.graphnorm.realline.def} can be extended to show
	\begin{equation*}
		\| (\opA - c) u \|^2 + \langle c \rangle^2 \| u \|^2 \gs \|  u' \|^2 + \left\| a u \right\|^2 + \| u \|^2, \quad u \in \Dom(\opA),
	\end{equation*}	
	(where the implicit constant is independent of $c$), which on Fourier space reads
	\begin{equation}
		\label{eq:Sinfhat.graphnorm.est}
		\| (\widehat \opS_{\infty} - c) u \|^2 + \langle c \rangle^2 \| u \|^2 \gs \|  \xi u \|^2 + \left\| \widehat{a} u \right\|^2 + \| u \|^2, \quad u \in \Dom(\widehat \opS_{\infty}).
	\end{equation}	
	From \eqref{eq:Sinfhat.graphnorm.est}, reasoning as in the proof of \eqref{eq:Hq.Tmuminus1.norm}, we deduce
	\begin{equation}
		\label{eq:Sinfhatinv.graphnorm.est.1}
		\begin{aligned}
			&\| \xi (\widehat \opS_{\infty} - c)^{-1} \| + \| (\widehat \opS_{\infty} - c)^{-1} \xi \| + \| \widehat{a} (\widehat \opS_{\infty} - c)^{-1} \| + \| (\widehat \opS_{\infty} - c)^{-1} \widehat{a} \|\\
			&\hspace{2in} \ls 1 + \langle c \rangle \| (\widehat \opS_{\infty} - c)^{-1} \|.
		\end{aligned}
	\end{equation}
	Furthermore, by Assumption~\ref{asm:a.q.dwe}~\ref{itm:a.gt.q}, we have $\| \widehat{q} u \| \ls \| \widehat{a} u \| + \| u \|$ and hence
	\begin{equation}
		\label{eq:Sinfhatinv.graphnorm.est.2}
		\| \widehat{q} (\widehat \opS_{\infty} - c)^{-1} \| + \| (\widehat \opS_{\infty} - c)^{-1} \widehat{q} \| \ls 1 + \langle c \rangle \| (\widehat \opS_{\infty} - c)^{-1} \|.
	\end{equation}

	Let us write
	\begin{equation}
		\label{eq:Sbhat.Sbinf}
		\widehat \opS_b - c = \left(\opI + \frac1{2 \la} \widehat q (\widehat \opS_{\infty} - c)^{-1} + \widehat R_b (\widehat \opS_{\infty} - c)^{-1}\right) (\widehat \opS_{\infty} - c).
	\end{equation}
	Note that, by \eqref{eq:Rbhat.def}, \eqref{eq:Rbhat.est}, \eqref{eq:Sinfhatinv.graphnorm.est.1} and \eqref{eq:Sinfhatinv.graphnorm.est.2}, we have
	\begin{equation*}
		\begin{aligned}
			\| \frac1{2 \la} \widehat q (\widehat \opS_{\infty} - c)^{-1} \| &\ls \frac{1 + \langle c \rangle \| (\widehat \opS_{\infty} - c)^{-1} \|}{b},\\
			\| \widehat \opR_b (\widehat \opS_{\infty} - c)^{-1} \| &\le \frac{c^2}{2 |\la|} \| (\widehat \opS_{\infty} - c)^{-1} \| + \frac{c}{|\la|}\| \xi (\widehat \opS_{\infty} - c)^{-1} \|\\
			&\quad + \| \xi^{-1} (\widehat R_b + \frac{c^2}{2 \la} + i \frac{c}{\la} \xi) \|_\infty \| \xi (\widehat \opS_{\infty} - c)^{-1} \|\\
			&\ls c^2 b^{-1} \| (\widehat \opS_{\infty} - c)^{-1} \| + (c b^{-1} + \delta) (1 + \langle c \rangle \| (\widehat \opS_{\infty} - c)^{-1} \|),
		\end{aligned}
	\end{equation*}
	and it therefore follows that there exists a large enough $b_0(K) > 0$ and a sufficiently small $\delta(K) > 0$ (independent of $b$) such that the operator $\opI + (2 \la)^{-1} \widehat q (\widehat \opS_{\infty} - c)^{-1} + \widehat R_b (\widehat \opS_{\infty} - c)^{-1}$ is bounded (with $\| \opI + (2 \la)^{-1} \widehat q (\widehat \opS_{\infty} - c)^{-1} + \widehat R_b (\widehat \opS_{\infty} - c)^{-1} \| \approx 1$) and invertible for $b \ge b_0$. Hence using \eqref{eq:Sinfhat.graphnorm.est} and \eqref{eq:Sbhat.Sbinf} we deduce
	\begin{equation}
		\label{eq:Sbhat.graphnorm.est}
		\| (\widehat \opS_b - c) u \|^2 + \langle c \rangle^2 \| u \|^2 \gs \|  \xi u \|^2 + \left\| \widehat{a} u \right\|^2 + \| u \|^2, \quad u \in \Dom(\widehat \opS_b), \quad b \to +\infty.
	\end{equation}	
	Moreover by \eqref{eq:Sbhat.Sbinf} we find that $\widehat \opS_b - c$ is invertible and
	\begin{equation}
		\label{eq:Sbhat.inverse}
		(\widehat \opS_b - c)^{-1} = (\widehat \opS_{\infty} - c)^{-1} \left(\opI + \frac1{2 \la} \widehat q (\widehat \opS_{\infty} - c)^{-1} + \widehat R_b (\widehat \opS_{\infty} - c)^{-1}\right)^{-1},
	\end{equation}
	for $b \to +\infty$. Therefore by \eqref{eq:Sinfhatinv.graphnorm.est.1} and \eqref{eq:Sbhat.inverse}
	\begin{equation}
		\label{eq:Sbhatinv.graphnorm.est}
		\begin{aligned}
			&\| (\widehat \opS_b - c)^{-1} \| \approx \| (\widehat \opS_{\infty} - c)^{-1} \|,\\
			&\| \xi (\widehat \opS_b - c)^{-1} \| + \| (\widehat \opS_b - c)^{-1} \xi \| + \| \widehat{a} (\widehat \opS_b - c)^{-1} \| + \| (\widehat \opS_b - c)^{-1} \widehat{a} \|\\
			&\hspace{2.5in}\ls 1 + \langle c \rangle \| (\widehat \opS_{\infty} - c)^{-1} \|,
		\end{aligned}
	\end{equation}
	for $b \to +\infty$.
	
	Applying the second resolvent identity, we have
	\begin{equation}
		\label{eq:Sbhat.Sinfhat.est}
		\begin{aligned}
			\| (\widehat \opS_b - c)^{-1} - (\widehat \opS_{\infty} - c)^{-1} \| &\le \frac1{2 |\la|} \| (\widehat \opS_b - c)^{-1} \widehat{q} (\widehat \opS_{\infty} - c)^{-1} \|\\
			&\quad + \| (\widehat \opS_b - c)^{-1} (\frac{c^2}{2 \la} + i \frac{c}{\la} \xi) (\widehat \opS_{\infty} - c)^{-1} \|\\
			&\quad + \| (\widehat \opS_b - c)^{-1}  \xi  \xi^{-2} (\widehat R_b + \frac{c^2}{2 \la} + i \frac{c}{\la} \xi) \xi  (\widehat \opS_{\infty} - c)^{-1} \|\\
			&\ls b^{-1} \| (\widehat \opS_{\infty} - c)^{-1} \| (1 + \langle c \rangle \| (\widehat \opS_{\infty} - c)^{-1} \|)\\
			&\quad + c^2 b^{-1} \| (\widehat \opS_{\infty} - c)^{-1} \|^2\\
			&\quad + c b^{-1} \| (\widehat \opS_{\infty} - c)^{-1} \| (1 + \langle c \rangle \| (\widehat \opS_{\infty} - c)^{-1} \|)\\
			&\quad + b^{-1} (1 + \langle c \rangle \| (\widehat \opS_{\infty} - c)^{-1} \|)^2\\
			&\ls_K b^{-1} \| (\widehat \opS_{\infty} - c)^{-1} \|,
		\end{aligned}
	\end{equation}
	as $b \to +\infty$, where we have used \eqref{eq:Rbhat.est}, \eqref{eq:Sinfhatinv.graphnorm.est.1}, \eqref{eq:Sinfhatinv.graphnorm.est.2}, \eqref{eq:Sbhatinv.graphnorm.est} and the fact that the resolvent of $\widehat \opS_{\infty}$ is bounded above and below on K. Thus
	\begin{equation*}
		\| (\widehat \opS_b - c)^{-1} \| = \| (\widehat \opS_{\infty} - c)^{-1} \|  (1 + \BigO_K(b^{-1})), \quad b \to +\infty.
	\end{equation*}

	Since $\widehat \opS_b - c = (2 \la)^{-1} \opU_b \widetilde \opT(\la) \opU_b^{-1}$ and moreover $\| \widetilde \opT(\la)  u \| = \| \widehat \opT(\la) u \|$ for $0 \ne u \in \Dom(\widehat \opT(\la))$ such that $\supp u \subset \Omega_{b,+}$, we arrive at
	\begin{equation*}
		2 |\la| \| u \| = 2 |\la| \| \widetilde \opT(\la)^{-1} \widetilde \opT(\la) u \| \le \| (\widehat \opS_{\infty} - c)^{-1} \|  (1 + \BigO_K(b^{-1})) \| \widehat \opT(\la)  u \|, \quad b \to +\infty,
	\end{equation*}
	as required.
	
	For the case $\supp u \subset \Omega_{b,-}$, we repeat the above arguments but defining instead $\widetilde V_b(\xi) := c^2 - 2 i c \xi_b - 2  \xi_b (\xi + \xi_b) + (\xi + \xi_b)^2 \chi_{\Omega_{b,-}}(\xi)$, $(\opU_b u)(\xi) := u(\xi - \xi_b)$, $\widehat R_b (\xi) := -(2 \la)^{-1} c^2 + i (\la)^{-1} c \xi + (2 \la)^{-1} \xi^2 \chi_{\Omega_b}(\xi)$ and $\widehat \opS_{\infty} = \sF \opA^* \sF^{-1} = \widehat{a} + i \xi$.
\end{proof}

\subsubsection{Step 3: lower estimate}

\begin{proposition}
	\label{prop:lbound.Tla}
	Let the assumptions of Theorem~\ref{thm:resolvent.Tla} hold and let $\widehat \opT(\la)$ and $\opA$ be as in \eqref{eq:Tlahat.def} and \eqref{eq:Airy.def}, respectively. Then there exist functions $0 \neq u_b \in \Dom(\widehat \opT(\la))$ such that
	\begin{equation*}
		\| \widehat \opT(\la) u_b \| = \| (\opA - c)^{-1} \|^{-1} 2 b (1 + \BigO_K(b^{-1})) \| u_b \|, \quad b \to + \infty.
	\end{equation*}
\end{proposition}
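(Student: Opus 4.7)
The plan is to saturate the lower bound from Proposition~\ref{prop:local.Tla} by taking a near-optimal test vector for the rescaled operator $\widehat\opS_b - c = (2\la)^{-1}\opU_b\widetilde\opT(\la)\opU_b^{-1}$ from Step~\ref{itm:step2.intro}: cut off the norm-attaining vector of $(\widehat\opS_\infty - c)^{-1}$ and translate it via $\opU_b^{-1}$ so that its support lies in $\Omega_{b,+}$. Since $\opA$ has compact resolvent and empty spectrum, spectral theory for the compact self-adjoint operator $((\widehat\opS_\infty - c)^*(\widehat\opS_\infty - c))^{-1}$ produces a unit vector $g_0 \in \Dom(\widehat\opS_\infty)$ with $\|(\widehat\opS_\infty - c) g_0\| = \|(\widehat\opS_\infty - c)^{-1}\|^{-1} = \|(\opA-c)^{-1}\|^{-1}$; the singular-vector equation simultaneously places $f_0 := (\widehat\opS_\infty - c) g_0$ in $\Dom(\widehat\opS_\infty^*)$.

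Fix $\phi \in C_c^\infty(\R)$ with $\phi \equiv 1$ on $[-1,1]$ and $\supp\phi \subset (-2,2)$, set $\phi_b(\xi) := \phi(\xi/\delta_b)$, $v_b := \phi_b g_0$, and $u_b := \opU_b^{-1} v_b$; then $\supp u_b \subset \Omega_{b,+}$, so $\widehat\opT(\la) u_b = \widetilde\opT(\la) u_b$ and the claim reduces to showing $\|(\widehat\opS_b - c) v_b\| = \|(\opA-c)^{-1}\|^{-1}(1 + \BigO_K(b^{-1}))\|v_b\|$. I would expand
\begin{equation*}
	(\widehat\opS_b - c) v_b = \phi_b f_0 + [\widehat a, \phi_b] g_0 + \frac{1}{2\la}\,\widehat q\, v_b + \widehat\opR_b v_b
\end{equation*}
and bound the three error terms by $\BigO_K(b^{-1})$: the commutator via Lemma~\ref{lem:pdo.comp}, using $\|\phi_b^{(j)}\|_\infty \ls \delta_b^{-j}$ and the symbol bounds on $a$; the $\widehat q$-term via Assumption~\ref{asm:a.q.dwe}~\ref{itm:a.gt.q} and $\|\widehat a g_0\| \ls_K 1$; and $\widehat\opR_b v_b$ via the direct estimate $\|\widehat\opR_b v_b\| \le (2|\la|)^{-1}\bigl(c^2\|v_b\| + 2c\|\xi v_b\| + \|\xi^2 v_b\|\bigr)$, which uses $\supp v_b \subset \Omega_b$ to discard $\chi_{\Omega_b}$ and requires $\|\xi^2 g_0\| \ls_K 1$. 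The denominator is handled by $\|g_0 - v_b\|^2 \le \delta_b^{-2}\|\xi g_0\|^2$, giving $\|v_b\| = 1 + \BigO(b^{-2})$; combining with $\widetilde\opT(\la) = 2\la\,\opU_b^{-1}(\widehat\opS_b - c)\opU_b$ and $|\la| = b(1+\BigO_K(b^{-2}))$ then yields the stated asymptotic.

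The principal technical obstacle is the $K$-uniform bound $\|\xi^2 g_0\| \ls_K 1$. The graph-norm inequality \eqref{eq:Abeta.graphnorm.realline.def} applied to $\opA$ and $\opA^*$ immediately gives $\|\xi g_0\|, \|\widehat a g_0\|, \|\xi f_0\|, \|\widehat a f_0\| \ls_K 1$. To upgrade to second order, I would exploit the quadratic-form identity
\begin{equation*}
	\|(\opA-c)\check g_0\|^2 = \|\check g_0'\|^2 + \int_\R \bigl((a(x)-c)^2 + a'(x)\bigr)|\check g_0(x)|^2 \, \dd x,
\end{equation*}
which together with $|a'| \ls 1 + a$ from Assumption~\ref{asm:a.q.dwe}~\ref{itm:a.symbolclass} and a standard absorption argument gives $\int a^2 |\check g_0|^2 \ls_K 1$, i.e.\ $\widehat{a^2}\, g_0 \in \Lt$. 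The commutator identity $[\xi,\widehat a] = -i\widehat{a'}$ applied twice, starting from $\xi g_0 = i(\widehat a g_0 - f_0 - cg_0)$, then expresses $\xi^2 g_0$ as a linear combination of $\widehat{a^2} g_0$, $\widehat a f_0$, $c\widehat a g_0$, $\widehat{a'} g_0$, $\xi f_0$ and $c\xi g_0$, each of which is uniformly bounded for $c \in K$ by what precedes.
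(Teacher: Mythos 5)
Your proposal is correct in substance, but it takes a genuinely different route from the paper at the key technical step. The paper works with the singular vector $g_b$ of the \emph{perturbed} operator $\widehat\opS_b - c$ itself (obtained from the compact, self-adjoint operator $((\widehat\opS_b^* - c)(\widehat\opS_b - c))^{-1}$), relates $\varsigma_b = \|(\widehat\opS_b-c)^{-1}\|$ to $\varsigma_\infty = \|(\opA-c)^{-1}\|$ through the already-established resolvent-difference estimate \eqref{eq:Sbhat.Sinfhat.est}, and controls the cut-off error $(\psi_b-1)(\widehat\opS_b-c)g_b$ by exploiting the eigenvector equation $(\widehat\opS_b^*-c)(\widehat\opS_b-c)g_b = \varsigma_b^{-2}g_b$ together with $\|\xi(\widehat\opS_b^*-c)^{-1}\|\ls_K 1$ — thereby never needing second-order information on the singular vector. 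You instead freeze the ($b$-independent) singular vector $g_0$ of the limit operator $\widehat\opS_\infty - c$ and treat $(2\la)^{-1}\widehat q$ and $\widehat\opR_b$ as explicit perturbations; the price is the $K$-uniform bound $\|\xi^2 g_0\|\ls_K 1$, which you need to handle the $\xi^2\chi_{\Omega_b}$ piece of $\widehat\opR_b$ (the cruder bound $\|\xi\chi_{\Omega_b}\|_\infty\|\xi g_0\|$ only gives $\BigO(\delta)$, so this second-order bound is genuinely necessary for your route). Your derivation of it — the form identity $\|(\opA-c)\check g_0\|^2 = \|\check g_0'\|^2 + \int((a-c)^2+a')|\check g_0|^2$, absorption via $|a'|\ls 1+a$ to get $\|\widehat{a^2}g_0\|\ls_K 1$, the adjoint graph estimate for $f_0$ (legitimate since the singular-value equation puts $f_0\in\Dom(\widehat\opS_\infty^*)$), and the commutator $[\xi,\widehat a]=-i\widehat{a'}$ — is sound, and all resulting contributions are indeed $\BigO_K(b^{-1})$ since $\|(\opA-c)^{-1}\|\approx_K 1$ on the bounded set $K$. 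What your approach buys is independence from the norm-resolvent convergence step ($|\varsigma_b-\varsigma_\infty|=\BigO_K(\varsigma_\infty b^{-1})$ is never needed); what the paper's buys is avoiding any a priori regularity of the singular vector beyond the first-order graph bounds. Minor points you should still patch: the leading term requires $\|(1-\phi_b)f_0\|\le\delta_b^{-1}\|\xi f_0\|\ls_K b^{-1}$ (you prove $\|\xi f_0\|\ls_K 1$ but never apply it to the numerator); Lemma~\ref{lem:pdo.comp} is stated on $\SchwR$, so, as in the paper, a core/density argument is needed both to justify $\phi_b g_0\in\Dom(\widehat a)$ and to apply the commutator expansion to $g_0$; the sign in $\xi g_0 = i(\widehat a g_0 - f_0 - cg_0)$ should be $-i$ (harmless); and the first-order bound $\|g_0-v_b\|\le\delta_b^{-1}\|\xi g_0\|$ gives $\|v_b\|=1+\BigO_K(b^{-1})$ rather than $1+\BigO(b^{-2})$ (use $\delta_b^{-2}\|\xi^2 g_0\|$ if you want the latter; either suffices).
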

\begin{proof}
	We retain the notation introduced in the proof of Proposition~\ref{prop:local.Tla}; in particular, $\widehat \opS_{\infty}$ and $\widehat \opS_b$ are as in \eqref{eq:Sinfhat.def} and \eqref{eq:Sbhat.def}, respectively.
	
	With a sufficiently large $b_0 > 0$, the $\Lt(\R)$ operators $\widehat \opB_b := ((\widehat \opS_b^* - c) (\widehat \opS_b - c))^{-1}$, $b \in (b_0,\infty]$, are compact, self-adjoint and non-negative. Let $\varsigma_b^2 > 0$ be their spectral radii and $g_b \in \Dom(\widehat \opB_b)$ be corresponding normalised eigenfunctions. Then $g_b \in \Dom(\widehat \opS_b)$ and we have
	\begin{equation*}
		\| (\widehat \opS_b - c) g_b \| = \varsigma_b^{-1} = \| (\widehat \opS_b - c)^{-1} \|^{-1}, \quad b \in (b_0, +\infty].
	\end{equation*}
	Moreover from \eqref{eq:Sbhat.Sinfhat.est} we obtain
	\begin{equation}
		\label{eq:lab.lainf.dist}
		|\varsigma_b - \varsigma_\infty|  = \BigO_K(\varsigma_\infty b^{-1}), \quad b \to +\infty.
	\end{equation}

	Consider $\psi_b \in C_c^{\infty}((-2 \delta_b, 2 \delta_b))$, $0 \le \psi_b \le 1$, $\psi_b = 1$ on $(-\delta_b, \delta_b)$ and such that
	\begin{equation}
		\label{eq:psibp.est}
		\| \psi_b^{(j)} \|_{\infty} \ls (\delta_b)^{-j}, \quad j \in \{1, 2, \dots, N + 1 + l\},
	\end{equation}
	with $N := \max\{\lceil m_a \rceil, \lceil m_q \rceil\} + 1$ and sufficiently large $l \in \N$ (see Remark~\ref{rmk:a.q.symbolclass} and the statement of Lemma~\ref{lem:pdo.comp}, in particular \eqref{eq:R_N.est}). It is clear that $\psi_b \to 1$ pointwise in $\R$ as $b \to +\infty$.
	
	Next we justify that $\psi_b g_b \in \Dom(\widehat{a})$ and therefore $\psi_b g_b \in \Dom(\widehat \opS_b)$ (see \eqref{eq:Sinfhat.def} and \eqref{eq:Sbhat.def}). Letting $u \in \SchwR$, then $\psi_b u \in \Dom(\widehat{a})$ and using the expansion \eqref{eq:compositionformula} we have
	\begin{equation}
		\label{eq:ahat.psib.expansion}
		\widehat{a} \psi_b u = \psi_b \widehat{a} u + [\widehat{a}, \psi_b] u = \psi_b \widehat{a} u + \sum_{j=1}^{N} \frac{i^j}{j!} \psi_b^{(j)} \widehat{a}^{(j)} u + \opR_{N+1} u
	\end{equation}
	and hence, applying Assumption~\ref{asm:a.q.dwe}~\ref{itm:a.symbolclass}, \eqref{eq:psibp.est} and \eqref{eq:R_N.est}, there exists $C > 0$, independent of $b$, such that
	\begin{equation*}
		\begin{aligned}
			\| \widehat{a} \psi_b u \| &\le \| \widehat{a} u \| + \sum_{j=1}^{N} \frac1{j!} \| \psi_b^{(j)} \|_{\infty} \| a^{(j)} \check{u} \| + \| \opR_{N+1} u \|\\
			&\le \| \widehat{a} u \| + C b^{-1} (\| \widehat{a} u \| + \| u \|).
		\end{aligned}
	\end{equation*}
	But $\SchwR$ is a core for $a$, and hence for $\widehat{a}$, and it therefore follows that
	\begin{equation*}
		\| \widehat{a} \psi_b g_b \| \le \| \widehat{a} g_b \| + C b^{-1} (\| \widehat{a} g_b \| + \| g_b \|).
	\end{equation*}
	Since $g_b \in \Dom(\widehat{a})$, this shows that $\psi_b g_b \in \Dom(\widehat{a})$.
	
	Furthermore
	\begin{equation*}
		(\widehat \opS_b - c) \psi_b g_b = (\widehat \opS_b - c) g_b + (\psi_b - 1) (\widehat \opS_b - c) g_b + [\widehat{a} + (2 \la)^{-1} \widehat{q}, \psi_b] g_b.
	\end{equation*}
	Our next goal is to estimate the second and third terms in the above equality. Employing \eqref{eq:lab.lainf.dist}, \eqref{eq:Sbhatinv.graphnorm.est} (and analogously for the adjoint $\widehat S_b^* - c$) and expansions for $[\widehat{a}, \psi_b]$ and $[\widehat{q}, \psi_b]$ such as \eqref{eq:ahat.psib.expansion}, we obtain as $b \to +\infty$
	\begin{equation*}
		\begin{aligned}
			\| (\psi_b - 1) (\widehat \opS_b - c) g_b \| &\ls \| (\psi_b - 1) \xi^{-1} \|_\infty \| \xi (\widehat S_b^* - c)^{-1} \| \| (\widehat S_b^* - c) (\widehat S_b - c) g_b \|\\
			&\ls b^{-1} (1 + \langle c \rangle \| (\widehat \opS_{\infty} - c)^{-1} \|) \varsigma_b^{-2}\\
			&\ls_K b^{-1} \varsigma_{\infty}^{-1},\\
			\| [\widehat{a} + (2 \la)^{-1} \widehat{q}, \psi_b] g_b \| &\ls b^{-1} (\| \widehat{a} g_b \| + \| g_b \|) + b^{-2} (\| \widehat{q} g_b \| + \| g_b \|)\\
			&\ls b^{-1} (\| \widehat{a} g_b \| + \| g_b \|) \ls b^{-1} (\| (\widehat \opS_b - c) g_b \| + \langle c \rangle \| g_b \|)\\
			&\ls b^{-1} (\varsigma_b^{-1} + \langle c \rangle) \ls_K b^{-1} \varsigma_{\infty}^{-1},
		\end{aligned}
	\end{equation*}
	where in the two estimates before the last line we have also used Assumption~\ref{asm:a.q.dwe}~\ref{itm:a.gt.q} and \eqref{eq:Sbhat.graphnorm.est}. Hence $\| (\widehat \opS_b - c) \psi_b g_b \| = \varsigma_b^{-1} + \BigO_K(\varsigma_{\infty}^{-1} b^{-1})$ as $b \to +\infty$. Writing $\psi_b g_b = g_b + (\psi_b - 1) g_b$, we similarly obtain $\| \psi_b g_b \| = 1 + \BigO_K(\varsigma_{\infty}^{-1} b^{-1})$ as $b \to +\infty$. Thus applying \eqref{eq:lab.lainf.dist}, we arrive at
	\begin{equation*}
		\left| \frac{\| (\widehat \opS_b - c) \psi_b g_b \|}{\| \psi_b g_b \|} - \frac1{\varsigma_\infty} \right| = \BigO_K(\varsigma_{\infty}^{-1} b^{-1}), \quad b \to +\infty.
	\end{equation*}

	Recalling that $\widehat \opS_b - c = (2 \la)^{-1} \opU_b \widetilde \opT(\la) \opU_b^{-1}$ and letting $u_b := \opU_b^{-1} \psi_b g_b$, then $u_b \in \Dom(\widehat \opT(\la))$ with $\supp u_b \subset \Omega_{b,+}$. We therefore conclude
	\begin{equation*}
		\left| \frac{(2 |\la|)^{-1} \| \widehat \opT(\la) u_b \|}{\| u_b \|} - \frac1{\varsigma_\infty} \right| =  \BigO_K(\varsigma_{\infty}^{-1} b^{-1}), \quad b \to +\infty
	\end{equation*}
	and the claim follows.
\end{proof}

\subsubsection{Step 4: combining the estimates}
\label{sssec:step.4.R}
With $\Omega_{b,\pm}'$, $\Omega_{b,\pm}$ and $\delta_b$ as defined in \eqref{eq:deltab.def}, \eqref{eq:Omega.def}, let $\phi_{b,\pm} \in C_c^{\infty}(\Omega_{b,\pm})$, $0 \le \phi_{b,\pm} \le 1$, be such that
\begin{equation}
	\label{eq:phib.def}
	\phi_{b,\pm}(\xi) = 1, \; \xi \in \Omega'_{b,\pm}, \quad \|\phi_{b,\pm}^{(j)}\|_{\infty} \ls \delta_b^{-j}, \quad j \in \{1, 2, \dots, N + 1 + l\},
\end{equation}		
with $N := \max\{\lceil m_a \rceil, \lceil m_q \rceil\} + 1$ and sufficiently large $l \in \N$ (see Remark~\ref{rmk:a.q.symbolclass}, the statement of Lemma~\ref{lem:pdo.comp} and, in particular, the upper estimate~\eqref{eq:R_N.est}) and define
\begin{equation}
	\label{eq:phibk.def}
	\begin{aligned}
		\phi_{b,0}(\xi) &:= 1 - (\phi_{b,+}(\xi) + \phi_{b,-}(\xi)), \quad \phi_{b,1}(\xi) := \phi_{b,+}(\xi),\\
		\phi_{b,2}(\xi) &:= \phi_{b,-}(\xi), \quad \xi \in \R.		
	\end{aligned}
\end{equation}
\begin{lemma}
	\label{lem:Tlahat.phibk.commutator}
	Let the assumptions of Theorem~\ref{thm:resolvent.Tla} hold, with $\widehat{a}$ and $\widehat{q}$ as defined in \eqref{eq:Tlahat.def}, and let $\phi_{b,k}$, $k \in \{0, 1, 2\}$, be as defined in \eqref{eq:phibk.def}. Then for all $u \in \SchwR$, all $k \in \{0, 1, 2\}$, we have
	\begin{equation}
		\label{eq:Tlahatphibk.commutator.norm.est}
		\| [\widehat{q} + 2 \la \widehat{a}, \phi_{b,k}] u \| \ls_{\delta} b^{-1} \| \widehat \opT(\la) u \| + \| u \|, \quad b \to +\infty.
	\end{equation}
\end{lemma}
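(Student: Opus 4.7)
My plan is to expand both commutators $[\widehat{q},\phi_{b,k}]$ and $[\widehat{a},\phi_{b,k}]$ using the pseudo-differential composition formula of Lemma~\ref{lem:pdo.comp}, to estimate the resulting finite sums and remainders term-by-term exploiting the sharp derivative bounds on $\phi_{b,k}$ from \eqref{eq:phib.def} combined with the symbol-class properties \ref{itm:a.symbolclass}--\ref{itm:q.symbolclass} of Assumption~\ref{asm:a.q.dwe}, and finally to absorb the residual factors $\|\widehat{a}u\|$ and $\|\widehat{q}u\|$ using the Fourier-transformed graph-norm estimate of Proposition~\ref{prop:Tla.graphnorm}.

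First I would split
\begin{equation*}
[\widehat{q}+2\la\widehat{a},\phi_{b,k}]\,u = [\widehat{q},\phi_{b,k}]\,u + 2\la\,[\widehat{a},\phi_{b,k}]\,u,
\end{equation*}
then invoke Lemma~\ref{lem:pdo.comp} (with $F=q$, resp.\ $F=a$, and $\phi=\phi_{b,k}$) at order $N := \max\{\lceil m_{a}\rceil, \lceil m_{q}\rceil\} + 1$, which is admissible in view of \eqref{eq:phib.def}. This produces
\begin{equation*}
[\widehat{a},\phi_{b,k}]\,u = \sum_{j=1}^{N}\frac{i^{j}}{j!}\,\phi_{b,k}^{(j)}\,\widehat{a^{(j)}}\,u + R^{a}_{N+1}\,u,
\end{equation*}
and an analogous expansion for $q$, with remainders satisfying $\|R^{a}_{N+1}u\|+\|R^{q}_{N+1}u\|\ls_{\delta} b^{-(N+1)}\|u\|$ by \eqref{eq:R_N.est} and \eqref{eq:phib.def}. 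For each $j\geq 1$, combining $\|\phi_{b,k}^{(j)}\|_{\infty}\ls_{\delta} b^{-j}$ with the pointwise bound $|a^{(j)}(x)|\ls (1+a(x))\langle x\rangle^{-j}$ from Assumption~\ref{asm:a.q.dwe}~\ref{itm:a.symbolclass} yields $\|\phi_{b,k}^{(j)}\widehat{a^{(j)}}u\|\ls_{\delta} b^{-j}(\|\widehat{a}u\|+\|u\|)$, and similarly for $q$. After multiplication by $|2\la|\approx b$, the leading contribution (from $j=1$) is bounded by $\|\widehat{a}u\|+\|u\|$, with higher-order terms smaller by additional powers of $b^{-1}$.

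The closing step is to use the Fourier-transformed graph-norm bounds coming from Proposition~\ref{prop:Tla.graphnorm},
\begin{equation*}
\|\widehat{a}u\| \ls b^{-1}\|\widehat \opT(\la)u\| + b\|u\|, \qquad \|\widehat{q}u\| \ls \|\widehat \opT(\la)u\| + b^{2}\|u\|,
\end{equation*}
together with Assumption~\ref{asm:a.q.dwe}~\ref{itm:a.gt.q} (which furnishes $\|\widehat{q}u\|\ls\|\widehat{a}u\|+\|u\|$, so that the $q$-commutator is subsumed into the $a$-commutator estimate up to lower-order terms). The main obstacle, in my view, is controlling the $j=1$ term of $2\la[\widehat{a},\phi_{b,k}]$ so that only $\|u\|$ (rather than $b\|u\|$) appears on the right-hand side: a naïve substitution of the graph-norm bound on $\|\widehat{a}u\|$ leaves behind a spurious $b\|u\|$. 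The gain can be extracted from the extra $\langle x\rangle^{-1}$ decay in $|a'(x)|\ls(1+a(x))\langle x\rangle^{-1}$, which via an interpolation argument permits the refinement $\|a'\check u\|\le \varepsilon\|\widehat{a}u\|+C_{\varepsilon}\|u\|$ for arbitrary $\varepsilon>0$; tuning $\varepsilon\sim b^{-1}$ converts the dangerous $b\|u\|$ into an acceptable $O(\|u\|)$ term, and reassembling all the pieces (remainder, $j\geq 2$ terms, and the analogous $q$-contribution) yields the stated bound.
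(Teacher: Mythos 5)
Your outline matches the paper up to the decisive point, and you have correctly located the crux: after multiplying by $2\la$, the $j=1$ term $\phi_{b,k}'\,\widehat{a}^{(1)}u$ is the only one that the crude bounds do not handle, since $\|\phi_{b,k}'\|_\infty\|a'\check u\|\ls b^{-1}(\|\widehat a u\|+\|u\|)$ together with \eqref{eq:Tla.graphnorm} leaves a spurious $b\|u\|$. However, your proposed repair does not work. The refinement $\|a'\check u\|\le\varepsilon\|\widehat a u\|+C_\varepsilon\|u\|$ is indeed available, but for an unbounded damping (Assumption~\ref{asm:a.q.dwe}~\ref{itm:a.incr.unbd} forces $a$ to be unbounded, which is the whole point of the paper) the constant necessarily blows up as $\varepsilon\to0$: the operator inequality forces the pointwise bound $|a'(x)|\le\varepsilon a(x)+C_\varepsilon$ a.e., so $C_\varepsilon\gtrsim\sup_x\bigl(|a'(x)|-\varepsilon a(x)\bigr)$; already for $a(x)=x^2$ this gives $C_\varepsilon\gtrsim\varepsilon^{-1}$, and for $a(x)=x^{2n}$ one gets $C_\varepsilon\gtrsim\varepsilon^{1-2n}$. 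Tuning $\varepsilon\sim b^{-1}$ therefore produces $b^{-1}C_{b^{-1}}\|u\|\gtrsim\|u\|$ (and typically much worse) \emph{before} the final multiplication by $|2\la|\approx b$, so the dangerous $b\|u\|$ term (or worse) reappears; in fact no choice of $\varepsilon$ can give $\varepsilon b+C_\varepsilon\ls 1$, which is what your scheme would require.

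The mechanism the paper actually uses is of a different nature and is the missing idea in your proposal: it exploits the \emph{support} of $\phi_{b,k}'$, not extra decay of $a'$. One writes $\phi_{b,k}'\widehat a^{(1)}u=\widehat a^{(1)}\phi_{b,k}'u-[\widehat a^{(1)},\phi_{b,k}']u$; since $\supp(\phi_{b,k}'u)\cap(\Omega'_{b,+}\cup\Omega'_{b,-})=\emptyset$ by \eqref{eq:phib.def}, the localized function $\phi_{b,k}'u$ falls under the away-from-turning-points estimate \eqref{eq:Tlahat.away.est.5} of Proposition~\ref{prop:away.Tla}, which yields the crucial gain $\|\widehat a\,\phi_{b,k}'u\|\ls_\delta b^{-1}\|\widehat\opT(\la)\phi_{b,k}'u\|$ (a full factor $b^{-1}$ better than the global graph-norm bound \eqref{eq:Tla.graphnorm}). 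One then commutes $\widehat\opT(\la)$ past $\phi_{b,k}'$, and the nested commutators $[\widehat a,\phi_{b,k}']$, $[\widehat q,\phi_{b,k}']$, $[\widehat a^{(1)},\phi_{b,k}']$ only need the crude bounds because they carry additional powers of $b^{-1}$ from $\|\phi_{b,k}^{(j+1)}\|_\infty\ls\delta_b^{-j-1}$. This gives $\|\phi_{b,k}'\widehat a^{(1)}u\|\ls_\delta b^{-1}\bigl(b^{-1}\|\widehat\opT(\la)u\|+\|u\|\bigr)$, which is exactly what survives multiplication by $2|\la|$. Your treatment of the remainder, of the $j\ge2$ terms, and of the $q$-commutator (via $1+q\ls1+a$) is consistent with the paper, but without the localization step above the proof of \eqref{eq:Tlahatphibk.commutator.norm.est} does not close.
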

\begin{proof}
	Let $u \in \SchwR$ and $k \in \{0, 1, 2\}$, then by Lemma~\ref{lem:pdo.comp}
	\begin{equation}
		\label{eq:ahat.phib.commutator}
		[\widehat{a}, \phi_{b,k}] u = \sum_{j=1}^{N} \frac{i^j}{j!} \phi_{b,k}^{(j)} \widehat{a}^{(j)} u + \opR_{a,N+1,k} u.
	\end{equation}
	Note that, since $N \ge 2$, we have (see \eqref{eq:R_N.est} and \eqref{eq:phib.def})
	\begin{equation}
		\label{eq:ahat.phib.commutator.est.1}
		\| \opR_{a,N+1,k} u \| \ls b^{-3} \| u \|, \quad b \to +\infty.
	\end{equation}
	Moreover, using Assumption~\ref{asm:a.q.dwe}~\ref{itm:a.symbolclass} with $n = j$, \eqref{eq:phib.def} and \eqref{eq:Tla.graphnorm}, we find for $2 \le j \le N$ and $b \to +\infty$
	\begin{equation}
		\label{eq:ahat.phib.commutator.est.2}
		\begin{aligned}
			\| \phi_{b,k}^{(j)} \widehat{a}^{(j)} u \| &\le \| \phi_{b,k}^{(j)} \|_{\infty} \| \widehat{a}^{(j)} u \| \ls b^{-2} \| (1 + \widehat{a}) u \|\\
			&\ls b^{-2} (b^{-1} \| \widehat \opT(\la) u \| + b \| u \|).
		\end{aligned}
	\end{equation}

	In order to estimate $\| \phi_{b,k}' \widehat{a}^{(1)} u \|$, let us write $\phi_{b,k}' \widehat{a}^{(1)} u = \widehat{a}^{(1)} \phi_{b,k}' u - [\widehat{a}^{(1)}, \phi_{b,k}'] u$. Using Assumption~\ref{asm:a.q.dwe}~\ref{itm:a.symbolclass} with $n = 1$ and \eqref{eq:phib.def} with $j = 1$, we deduce
	\begin{equation*}
		\| \widehat{a}^{(1)} \phi_{b,k}' u \| \ls \| (1 + \widehat{a}) \phi_{b,k}' u \| \ls b^{-1} \| u \| + \| \widehat{a} \phi_{b,k}' u \|.
	\end{equation*}
	Furthermore, noting that $\supp \phi_{b,k}' u \cap (\Omega'_{b,+} \cup \Omega'_{b,-}) = \emptyset$ and applying \eqref{eq:Tlahat.away.est.5}, we obtain as $b \to +\infty$
	\begin{equation*}
		\begin{aligned}
			\| \widehat{a} \phi_{b,k}' u \| &\ls_{\delta} b^{-1} \| \widehat \opT(\la) \phi_{b,k}' u \| \ls_{\delta} b^{-1} (\| \phi_{b,k}' \widehat \opT(\la) u \| + \| [\widehat \opT(\la), \phi_{b,k}'] u \|)\\
			&\ls_{\delta} b^{-1} (b^{-1} \| \widehat \opT(\la) u \| + \| [\widehat{q}, \phi_{b,k}'] u \| + b \| [\widehat{a}, \phi_{b,k}'] u \|).
		\end{aligned}
	\end{equation*}
	Applying Lemma~\ref{lem:pdo.comp}, Assumption~\ref{asm:a.q.dwe}~\ref{itm:a.symbolclass} with $n = j$, \eqref{eq:phib.def} and \eqref{eq:Tla.graphnorm}, we have
	\begin{equation*}
		\begin{aligned}
			\| [\widehat{a}, \phi'_{b,k}] u \| &\le \sum_{j=1}^{N} \frac1{j!} \| \phi_{b,k}^{(j+1)} \widehat{a}^{(j)} u \| + \| \opR'_{a,N+1,k} u \| \ls b^{-2} \| (1 + \widehat{a}) u \| + b^{-4} \| u \|\\
			&\ls b^{-2} (b^{-1} \| \widehat \opT(\la) u \| + b \| u \|), \quad b \to +\infty.
		\end{aligned}
	\end{equation*}
	Moreover, since $1 + q \ls 1 + a$ (by Asumption~\ref{asm:a.q.dwe}~\ref{itm:a.gt.q}), we can similarly derive
	\begin{equation*}
		\| [\widehat{q}, \phi'_{b,k}] u \| \ls b^{-2} (b^{-1} \| \widehat \opT(\la) u \| + b \| u \|), \quad b \to +\infty.
	\end{equation*}
	Therefore
	\begin{equation}
		\label{eq:ahat.phib.commutator.est.3}
		\begin{aligned}
			\| \widehat{a}^{(1)} \phi_{b,k}' u \| &\ls_{\delta} b^{-1} (\| u \| + b^{-1} \| \widehat \opT(\la) u \| + b^{-2} \| \widehat \opT(\la) u \| + \| u \|)\\
			&\ls_{\delta} b^{-1} (b^{-1} \| \widehat \opT(\la) u \| + \| u \|), \quad b \to +\infty.
		\end{aligned}
	\end{equation}
	Expanding the term $[\widehat{a}^{(1)}, \phi_{b,k}'] u$ as before, we have as $b \to +\infty$
	\begin{equation}
		\label{eq:ahat.phib.commutator.est.4}
		\begin{aligned}
			\| [\widehat{a}^{(1)}, \phi'_{b,k}] u \| &\le \sum_{j=1}^{N} \frac1{j!} \| \phi_{b,k}^{(j+1)} \widehat{a}^{(j+1)} u \| + \| \opR''_{a,N+1,k} u \|\\
			&\ls b^{-2} \| (1 + \widehat{a}) u \| + b^{-4} \| u \| \ls b^{-2} (b^{-1} \| \widehat \opT(\la) u \| + b \| u \|).
		\end{aligned}
	\end{equation}
	Hence, combining \eqref{eq:ahat.phib.commutator.est.3} and \eqref{eq:ahat.phib.commutator.est.4}, we obtain
	\begin{equation}
		\label{eq:ahat.phib.commutator.est.5}
		\| \phi_{b,k}' \widehat{a}^{(1)} u \| \ls_{\delta} b^{-1} (b^{-1} \| \widehat \opT(\la) u \| + \| u \|), \quad b \to +\infty.
	\end{equation}

	Substituting estimates \eqref{eq:ahat.phib.commutator.est.5}, \eqref{eq:ahat.phib.commutator.est.2} and \eqref{eq:ahat.phib.commutator.est.1} in \eqref{eq:ahat.phib.commutator}, we conclude
	\begin{equation}
		\label{eq:ahat.phib.commutator.est.6}
		\| [\widehat{a}, \phi_{b,k}] u \| \ls_{\delta} b^{-1} (b^{-1} \| \widehat \opT(\la) u \| + \| u \|), \quad b \to +\infty.
	\end{equation}

	Note that repeating the above process for $[\widehat{q}, \phi_{b,k}] u$, and using $1 + q \ls 1 + a$ from Assumption~\ref{asm:a.q.dwe}~\ref{itm:a.gt.q}, we similarly find
	\begin{equation}
		\label{eq:qhat.phib.commutator.est.1}
		\| [\widehat{q}, \phi_{b,k}] u \| \ls_{\delta} b^{-1} (b^{-1} \| \widehat \opT(\la) u \| + \| u \|), \quad b \to +\infty.
	\end{equation}

	The conclusion \eqref{eq:Tlahatphibk.commutator.norm.est} follows from \eqref{eq:ahat.phib.commutator.est.6} and \eqref{eq:qhat.phib.commutator.est.1}.
\end{proof}
\begin{lemma}
	\label{lem:Tlahat.u1u2.quasiorthogonal}
	Let the assumptions of Theorem~\ref{thm:resolvent.Tla} hold and let $\widehat \opT(\la)$ and  $\phi_{b,k}$, $k \in \{1, 2\}$, be as defined in \eqref{eq:Tlahat.def} and \eqref{eq:phibk.def}, respectively. Then for all $u \in \SchwR$, we have as $b \to +\infty$
	\begin{equation}
		\label{eq:Tlahat.u1u2.quasiorth.norm.est}
		(\| \widehat \opT(\la) \phi_{b,1} u \|^2 + \| \widehat \opT(\la) \phi_{b,2} u \|^2)^{\frac12} = \| \widehat \opT(\la) (\phi_{b,1} + \phi_{b,2}) u \| + \BigO_{\delta}(b^{-1}) (\| \widehat \opT(\la) u \| + \| u \|).
	\end{equation}
\end{lemma}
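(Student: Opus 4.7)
The proof rests on two observations. First, since $\delta < 1/4$ we have $2\delta_b < \xi_b/2$ for large $b$, so the intervals $\Omega_{b,+}$ and $\Omega_{b,-}$ are disjoint; hence $\phi_{b,1}$ and $\phi_{b,2}$ act in Fourier space as multiplication operators with disjoint supports. Second, the multiplication operator $\xi^2 + \la^2$ commutes with multiplication by $\phi_{b,k}(\xi)$, so that $[\widehat\opT(\la), \phi_{b,k}] = [\widehat q + 2\la \widehat a, \phi_{b,k}]$, which is precisely the commutator controlled by Lemma~\ref{lem:Tlahat.phibk.commutator}.

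First I would decompose, for $k \in \{1, 2\}$,
\begin{equation*}
	\widehat\opT(\la) \phi_{b,k} u = \phi_{b,k} \widehat\opT(\la) u + R_k u, \qquad R_k u := [\widehat q + 2\la\widehat a, \phi_{b,k}] u,
\end{equation*}
set $\epsilon_k := \|R_k u\|$ and invoke Lemma~\ref{lem:Tlahat.phibk.commutator} to obtain $\epsilon_k \ls_\delta b^{-1} \|\widehat\opT(\la) u\| + \|u\|$ as $b \to +\infty$. The reverse triangle inequality then yields $\bigl|\|\widehat\opT(\la)\phi_{b,k}u\| - \|\phi_{b,k}\widehat\opT(\la)u\|\bigr| \le \epsilon_k$.

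Because $\supp \phi_{b,1} \cap \supp \phi_{b,2} = \emptyset$, the products $\phi_{b,1}\widehat\opT(\la) u$ and $\phi_{b,2}\widehat\opT(\la) u$ are pointwise orthogonal in $\Lt(\R)$, giving the exact Pythagorean identity
\begin{equation*}
	\|(\phi_{b,1} + \phi_{b,2}) \widehat\opT(\la) u\|^2 = \|\phi_{b,1}\widehat\opT(\la) u\|^2 + \|\phi_{b,2}\widehat\opT(\la) u\|^2.
\end{equation*}
Then applying the triangle inequality in $\R^2$ to the vectors $(\|\widehat\opT(\la)\phi_{b,k}u\|)_{k=1,2}$ and $(\|\phi_{b,k}\widehat\opT(\la)u\|)_{k=1,2}$ yields
\begin{equation*}
	\Bigl|\bigl(\|\widehat\opT(\la)\phi_{b,1}u\|^2 + \|\widehat\opT(\la)\phi_{b,2}u\|^2\bigr)^{1/2} - \|(\phi_{b,1} + \phi_{b,2})\widehat\opT(\la) u\|\Bigr| \le (\epsilon_1^2 + \epsilon_2^2)^{1/2}.
\end{equation*}

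To pass from $\|(\phi_{b,1}+\phi_{b,2})\widehat\opT(\la)u\|$ to $\|\widehat\opT(\la)(\phi_{b,1}+\phi_{b,2})u\|$ on the right, I would invoke the reverse triangle inequality once more, incurring an additional error $\|R_1 u + R_2 u\| \le \epsilon_1 + \epsilon_2$. Collecting all the $\BigO_\delta(\cdot)$ contributions delivers \eqref{eq:Tlahat.u1u2.quasiorth.norm.est}. There is no serious obstacle here: this lemma is essentially a packaging statement that reduces an approximate Pythagoras identity for $\widehat\opT(\la)$ to the already-established commutator bound of Lemma~\ref{lem:Tlahat.phibk.commutator} combined with the exact orthogonality forced by the disjointness of $\Omega_{b,+}$ and $\Omega_{b,-}$.
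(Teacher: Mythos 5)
Your skeleton (commute $\phi_{b,k}$ past $\widehat\opT(\la)$, use the exact Pythagoras identity coming from $\supp\phi_{b,1}\cap\supp\phi_{b,2}=\emptyset$, then triangle inequalities) is the right general idea, but the quantitative conclusion you reach is strictly weaker than \eqref{eq:Tlahat.u1u2.quasiorth.norm.est}, and this is a genuine gap. Your whole error budget is controlled by $\epsilon_k=\|[\widehat q+2\la\widehat a,\phi_{b,k}]u\|$, and Lemma~\ref{lem:Tlahat.phibk.commutator} only gives $\epsilon_k\ls_\delta b^{-1}\|\widehat\opT(\la)u\|+\|u\|$; the $\|u\|$ term carries \emph{no} factor $b^{-1}$ (and cannot, since the commutator contains $2\la[\widehat a,\phi_{b,k}]$ with $|\la|\approx b$, so it is genuinely of size $\approx\|u\|$ in general). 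Hence collecting your $\BigO_\delta$ contributions yields an error $\BigO_\delta(b^{-1}\|\widehat\opT(\la)u\|+\|u\|)$, whereas the lemma asserts $\BigO_\delta(b^{-1})(\|\widehat\opT(\la)u\|+\|u\|)$, i.e.\ with $b^{-1}$ also in front of $\|u\|$. There is no a priori inequality $\|u\|\ls b^{-1}\|\widehat\opT(\la)u\|$ available at this stage (that is essentially what Theorem~\ref{thm:resolvent.Tla} is proving), so the gap cannot be closed by your argument as written.

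The paper avoids this loss by \emph{not} feeding the full commutator into the error. Using the expansion \eqref{eq:compositionformula}, it splits $\widehat\opT(\la)\phi_{b,k}u=\opB_{N,k}(\la)u+\opR_{N+1,k}(\la)u$, where $\opB_{N,k}(\la)u:=\phi_{b,k}\widehat\opT(\la)u+\sum_{j=1}^N\frac{i^j}{j!}\phi_{b,k}^{(j)}(\widehat q^{(j)}+2\la\widehat a^{(j)})u$ is supported in $\Omega_{b,\pm}$ (all the multipliers $\phi_{b,k}^{(j)}$ are), so $\opB_{N,1}(\la)u\perp\opB_{N,2}(\la)u$ \emph{exactly} and the large part of the commutator never produces a cross term. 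The only cross terms in $\|\widehat\opT(\la)(u_1+u_2)\|^2$ involve the pseudo-differential remainders, which satisfy $\|\opR_{N+1,k}(\la)u\|\ls b^{-2}\|u\|$ even after the factor $2\la$ (see \eqref{eq:RNla.norm.est}); a weighted estimate of the form $\|\opB_{N,1}(\la)u\|^{1/2}\|\opR_{N+1,2}(\la)u\|^{1/2}\ls b^{-1}\|\opB_{N,1}(\la)u\|+b\|\opR_{N+1,2}(\la)u\|$, combined with \eqref{eq:BNla.norm.est}, then yields the full $\BigO_\delta(b^{-1})(\|\widehat\opT(\la)u\|+\|u\|)$. (As a side remark, your weaker error bound would in fact still suffice for the way the lemma is invoked in \eqref{eq:uk.local.est1}--\eqref{eq:uk.local.est}, since it is there multiplied by $\|(\opA-c)^{-1}\|(2b)^{-1}$; but it does not prove the lemma as stated.)
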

\begin{proof}
	Let $u \in \SchwR$ and $u_k := \phi_{b,k} u$ with $k \in \{1,2\}$. Applying \eqref{eq:compositionformula} to $[\widehat{q}, \phi_{b,k}]$ and $[\widehat{a}, \phi_{b,k}]$, we have for any $k \in \{1, 2\}$
	\begin{equation*}
		\begin{aligned}
			\widehat \opT(\la) u_k &= \phi_{b,k} \widehat \opT(\la) u + [\widehat{q} + 2 \la \widehat{a}, \phi_{b,k}] u\\
			&= \opB_{N,k}(\la) u + \opR_{N+1,k}(\la) u,
		\end{aligned}
	\end{equation*}
	with
	\begin{align*}
		\opB_{N,k}(\la) u &:= \phi_{b,k} \widehat \opT(\la) u + \sum_{j=1}^{N} \frac{i^j}{j!} \phi_{b,k}^{(j)} (\widehat{q}^{(j)} + 2 \la \widehat{a}^{(j)}) u,\\
		\opR_{N+1,k}(\la) u &:= \opR_{q,N+1,k} u + 2  \la \opR_{a,N+1,k} u.
	\end{align*}
	The remainders $\opR_{q,N+1,k} u$, $\opR_{a,N+1,k} u$ for $[\widehat{q}, \phi_{b,k}]$, $[\widehat{a}, \phi_{b,k}]$, respectively, are defined in \eqref{eq:comp.operator.remainder}. Noting that $\opB_{N,1}(\la) u \subset \Omega_{b,+}$, $\opB_{N,2}(\la) u \subset \Omega_{b,-}$, and consequently $\opB_{N,1}(\la) u \perp \opB_{N,2}(\la) u$ in $L^2$, we deduce
	\begin{equation*}
		\begin{aligned}
			\| \widehat \opT(\la) (u_1 + u_2) \|^2 &= \| \widehat \opT(\la) u_1 \|^2 + \| \widehat \opT(\la) u_2 \|^2 + 2 \Re \langle \opB_{N,1}(\la) u, \opR_{N+1,2}(\la) u \rangle\\
			&\quad + 2 \Re \langle \opR_{N+1,1}(\la) u, \opB_{N,2}(\la) u \rangle\\
			&\quad + 2 \Re \langle \opR_{N+1,1}(\la) u, \opR_{N+1,2}(\la) u \rangle.
		\end{aligned}
	\end{equation*}
	Hence
	\begin{equation}
		\label{eq:Tlahat.u1u2.quasiorth.est.1}
		\begin{aligned}
			&|(\| \widehat \opT(\la) u_1 \|^2 + \| \widehat \opT(\la) u_2 \|^2)^{\frac12} - \| \widehat \opT(\la) (u_1 + u_2) \||\\
			&\qquad\qquad\ls \| \opB_{N,1}(\la) u \|^{\frac12} \| \opR_{N+1,2}(\la) u \|^{\frac12} + \| \opR_{N+1,1}(\la) u \|^{\frac12} \| \opB_{N,2}(\la) u \|^{\frac12}\\
			&\qquad\qquad\quad + \| \opR_{N+1,1}(\la) u \|^{\frac12} \| \opR_{N+1,2}(\la) u \|^{\frac12}.
		\end{aligned}
	\end{equation}

	Since \eqref{eq:ahat.phib.commutator.est.1} holds for $\opR_{q,N+1,k} u$ and $\opR_{a,N+1,k} u$, we find for $k \in \{1, 2\}$
	\begin{equation}
		\label{eq:RNla.norm.est}
		\| \opR_{N+1,k}(\la) u \| \ls b^{-2} \| u \|, \quad b \to +\infty. 
	\end{equation}
	Moreover
	\begin{equation*}
		\| \opB_{N,k}(\la) u - \phi_{b,k} \widehat \opT(\la) u \| \le \| \phi_{b,k}' (\widehat{q}^{(1)} + 2 \la \widehat{a}^{(1)}) u \| + \sum_{j=2}^{N} \frac1{j!} \| \phi_{b,k}^{(j)} (\widehat{q}^{(j)} + 2 \la \widehat{a}^{(j)}) u \|.
	\end{equation*}
	The terms in the right-hand side of the above inequality have already been estimated in Lemma~\ref{lem:Tlahat.phibk.commutator} (see \eqref{eq:ahat.phib.commutator.est.5}, \eqref{eq:ahat.phib.commutator.est.2} and the comments regarding $q$ at the end of the proof). Hence for $k \in \{1, 2\}$
	\begin{equation}
		\label{eq:BNla.norm.est}
		|\| \opB_{N,k}(\la) u \| - \| \widehat \opT(\la) u \|| \ls_{\delta} b^{-1} \| \widehat \opT(\la) u \| + \| u \|, \quad b \to +\infty.
	\end{equation}

	Applying \eqref{eq:RNla.norm.est} and \eqref{eq:BNla.norm.est}, we can estimate the first term in the right-hand side of \eqref{eq:Tlahat.u1u2.quasiorth.est.1} as $b \to +\infty$
	\begin{equation*}
		\begin{aligned}
			\| \opB_{N,1}(\la) u \|^{\frac12} \| \opR_{N+1,2}(\la) u \|^{\frac12} &\ls b^{-1} \| \opB_{N,1}(\la) u \| + b \| \opR_{N+1,2}(\la) u \|\\
			&\ls_{\delta} b^{-1} (\| \widehat \opT(\la) u \| + \| u \|).
		\end{aligned}
	\end{equation*}
	A similar estimate can be derived for $\| \opB_{N,2}(\la) u \|^{\frac12} \| \opR_{N+1,1}(\la) u \|^{\frac12}$ which, combined with \eqref{eq:RNla.norm.est}, yields the desired result.
\end{proof}
\begin{proof}[Proof of Theorem~\ref{thm:resolvent.Tla}]
	Let $0 \ne u \in \SchwR \subset \Dom(\widehat \opT(\la))$ and let us write $u = u_0 + u_1 + u_2$, where $u_k := \phi_{b,k} u$ with $k \in \{0,1,2\}$ and $\phi_{b,k}$ as defined in \eqref{eq:phibk.def}. Then
	\begin{equation*}
		\widehat \opT(\la) u_k = \phi_{b,k} \widehat \opT(\la) u + [\widehat{q} + 2 \la \widehat{a}, \phi_{b,k}] u, \quad k \in \{0,1,2\},
	\end{equation*}
	and therefore, noting that $\supp \phi_{b,1} \cap \supp \phi_{b,2} = \emptyset$ and applying Lemma~\ref{lem:Tlahat.phibk.commutator}, we obtain as $b \to +\infty$
	\begin{equation}
		\label{eq:Tlahat.uk.norm.est}
		\begin{aligned}
			\| \widehat \opT(\la) u_0 \| &\le (1 + \BigO_{\delta}(b^{-1})) \| \widehat \opT(\la) u \| + \BigO_{\delta}(1) \| u \|,\\
			\| \widehat \opT(\la) (u_1 + u_2) \| &\le (1 + \BigO_{\delta}(b^{-1})) \| \widehat \opT(\la) u \| + \BigO_{\delta}(1) \| u \|.
		\end{aligned}
	\end{equation}

	Firstly, using the fact that $u_1 \perp u_2$ in combination with Proposition~\ref{prop:local.Tla} and Lemma~\ref{lem:Tlahat.u1u2.quasiorthogonal}, we find as $b \to +\infty$
	\begin{equation}
		\label{eq:uk.local.est1}
		\begin{aligned}
			\| u_1 + u_2 \| &\le \| (\opA - c)^{-1} \| (2 b)^{-1} (1 + \BigO_K(b^{-1})) (\| \widehat \opT(\la) u_1 \|^2 + \| \widehat \opT(\la) u_2 \|^2)^{\frac12}\\
			&\le \| (\opA - c)^{-1} \| (2 b)^{-1} (1 + \BigO_K(b^{-1})) (\| \widehat \opT(\la) (u_1 + u_2) \|\\
			&\quad + \BigO_{\delta}(b^{-1}) (\| \widehat \opT(\la) u \| + \| u \|)).
		\end{aligned}
	\end{equation}
	Thus by \eqref{eq:Tlahat.uk.norm.est} we have as $b \to +\infty$
	\begin{equation}
		\label{eq:uk.local.est}
		\| u_1 + u_2 \| \le \| (\opA - c)^{-1} \| (2 b)^{-1} (1 + \BigO_K(b^{-1})) \| \widehat \opT(\la) u \| + \BigO_K(b^{-1}) \| u \|.
	\end{equation}

	Secondly, since $\supp u_0 \cap (\Omega'_{b,+} \cup \Omega'_{b,-}) = \emptyset$, then by Proposition~\ref{prop:away.Tla}
	\begin{equation*}
		b^2 \| u_0 \| \ls_{\delta} \| \widehat \opT(\la) u_0 \|, \quad b \to +\infty,
	\end{equation*}
	and applying \eqref{eq:Tlahat.uk.norm.est} we have
	\begin{equation}
		\label{eq:uk.away.est}
		\| u_0 \| \ls_{\delta} b^{-2} (\| \widehat \opT(\la) u \| + \| u \|), \quad b \to +\infty.
	\end{equation}

	Combining \eqref{eq:uk.local.est} and \eqref{eq:uk.away.est}, we find that for $b \to +\infty$
	\begin{equation*}
		\| u \| \le \| (\opA - c)^{-1} \| (2 b)^{-1} (1 + \BigO_K(b^{-1})) \| \widehat \opT(\la) u \| + \BigO_K(b^{-1}) \| u \|
	\end{equation*}
	and therefore
	\begin{equation}
		\label{eq:Hb.est.R}
		\| u \| \le \| (\opA - c)^{-1} \| (2 b)^{-1} (1 + \BigO_K(b^{-1})) \| \widehat \opT(\la) u \|.
	\end{equation}

	Since $\SchwR$ is a core for $\opT(\la)$, and equivalently for $\widehat \opT(\la)$, we can extend the above estimate to any $u \in \Dom(\widehat \opT(\la))$ relying on standard approximation arguments. The proof of the theorem follows by an appeal to Proposition~\ref{prop:lbound.Tla} and the use of the inverse Fourier transform to take the result back to $x$-space.
\end{proof}

\subsection{The norm of the resolvent along curves adjacent to the imaginary axis}
\label{ssec:resnorm.adj.dwe}
As in the analysis for Schr\"odinger operators with complex potential carried out in \cite[Sub-section~5.1]{ArSi-resolvent-2022}, it is possible to extend the proof of Theorem~\ref{thm:resolvent.Tla} to more general curves inside the left-hand side semi-plane $\overline\C_{-}$
\begin{equation}
	\label{eq:lab.Tla.def}
	\la_b := -c(b) + i b,
\end{equation}
where $b \in \R \setminus \{0\}$ and $c : \R \setminus \{0\} \to \overline\Rplus$ satisfies
\begin{gather}
	\label{eq:gral.curves.Tla.adj.asm} c_b |b|^{-1} = o(1), \quad |b| \to +\infty,\\
	\label{eq:gral.curves.Tla.Phib.asm} \Phi_b := \langle c_b \rangle^2 \| (\opA - c_b)^{-1} \| |b|^{-1} = o(1), \quad |b| \to +\infty,
\end{gather}
with $\opA$ as defined in \eqref{eq:Airy.def} and $c_b \equiv c(b)$. We are interested in two types of curves:
\begin{enumerate}[\upshape (1)]
	\item $\la_b$ with $c_b$ satisfying
	\begin{equation}
		\label{eq:gral.curves.dwe.straight.lines}
		c_b \ls 1, \quad b \to +\infty;
	\end{equation}
	\item $\la_b$ with $c_b$ satisfying
	\begin{equation}
		\label{eq:gral.curves.dwe.unbd.curves}
		\langle c_b \rangle \| (\opA - c_b)^{-1} \| \to +\infty, \quad b \to +\infty.
	\end{equation}
\end{enumerate}	
Note that, when \eqref{eq:gral.curves.dwe.straight.lines} holds (\eg~in the statement of Theorem~\ref{thm:resolvent.Tla}), we have $\langle c_b \rangle^2 \| (\opA - c_b)^{-1} \| \ls 1$ and therefore conditions \eqref{eq:gral.curves.Tla.adj.asm}-\eqref{eq:gral.curves.Tla.Phib.asm} are both automatically satisfied.

We also observe that, because of Assumption~\eqref{eq:gral.curves.Tla.adj.asm}, we have $\xi^2 + \la^2 = \xi^2 - b^2 (1 + o(1))$ when $|b| \to +\infty$, as in the proof of Theorem~\ref{thm:resolvent.Tla}.
\begin{proposition}
	\label{prop:gral.curves.Tla}
	Let $a$ and $q$ satisfy Assumption~\ref{asm:a.q.dwe} and let $\opT(\la_b)$ be the family of operators \eqref{eq:Tla.def}-\eqref{eq:Tla.domain} for $\la_b$ defined by \eqref{eq:lab.Tla.def}. Assume furthermore that \eqref{eq:gral.curves.Tla.adj.asm}-\eqref{eq:gral.curves.Tla.Phib.asm} hold with $c_b$ satisfying either \eqref{eq:gral.curves.dwe.straight.lines} or \eqref{eq:gral.curves.dwe.unbd.curves}. Then
	\begin{equation}
		\label{eq:gral.curves.Tla}
		\| \opT(\la_b)^{-1} \| = \| (\opA - c_b)^{-1} \| (2 |b|)^{-1} (1 + \BigO(\Phi_b)), \quad |b| \to +\infty,
	\end{equation}
	with $\opA$ as defined in \eqref{eq:Airy.def}.
\end{proposition}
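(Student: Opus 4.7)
My plan is to reproduce the four-step strategy used in Sub-section~\ref{ssec:proof.resolvent.Tla} to prove Theorem~\ref{thm:resolvent.Tla}, rewriting each occurrence of an $\BigO_K$ error in terms of the refined quantity $\Phi_b$. The hypotheses \eqref{eq:gral.curves.Tla.adj.asm}--\eqref{eq:gral.curves.Tla.Phib.asm} are designed precisely so that $c_b$ is small compared to $|b|$ and so that $\|(\opA-c_b)^{-1}\|$, while possibly unbounded, is dominated by $|b|/\langle c_b\rangle^2$. The graph-norm estimates in \eqref{eq:Sinfhat.graphnorm.est}--\eqref{eq:Sinfhatinv.graphnorm.est.2} already carry constants independent of $c$, so they transfer verbatim with $c$ replaced by $c_b$.

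Step 1 (away from $\pm\xi_b$) carries over essentially unchanged. In the proof of Proposition~\ref{prop:away.Tla}, $c$ appears only through terms of the form $c^2 b^{-2}$ or $c\,b^{-1}$, all of which vanish under \eqref{eq:gral.curves.Tla.adj.asm}; crucially, the argument does not involve $\|(\opA-c_b)^{-1}\|$.

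Step 2 (local estimate near $\pm\xi_b$) is where the main work lies. Following the proof of Proposition~\ref{prop:local.Tla} one writes
\[
\widehat\opS_b-c_b=\bigl(\opI+\tfrac{1}{2\la_b}\widehat{q}(\widehat\opS_\infty-c_b)^{-1}+\widehat R_b(\widehat\opS_\infty-c_b)^{-1}\bigr)(\widehat\opS_\infty-c_b),
\]
and each of the two perturbative terms is estimated by a combination of contributions of the form $c_b^\alpha|b|^{-1}\|(\opA-c_b)^{-1}\|^\beta$ together with a term proportional to $\delta\langle c_b\rangle\|(\opA-c_b)^{-1}\|$. Choosing the cut-off parameter $\delta=\delta(b)$ to vanish at a rate ensuring $\delta\langle c_b\rangle\|(\opA-c_b)^{-1}\|=\BigO(\Phi_b)$ and invoking \eqref{eq:gral.curves.Tla.Phib.asm}, all such terms collapse into $\BigO(\Phi_b)$. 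The second resolvent identity then yields
\[
\|(\widehat\opS_b-c_b)^{-1}\|=\|(\widehat\opS_\infty-c_b)^{-1}\|(1+\BigO(\Phi_b)), \qquad |b|\to+\infty.
\]
Step 3 (lower bound) follows analogously: the trial function $u_b=\opU_b^{-1}\psi_b g_b$ of Proposition~\ref{prop:lbound.Tla} is built from the eigenfunction of $((\widehat\opS_b^*-c_b)(\widehat\opS_b-c_b))^{-1}$, and every estimate there depends on $c$ only through the same resolvent and graph-norm quantities, so the matching lower bound with error $\BigO(\Phi_b)$ is obtained by the same procedure.

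Step 4 (combining) proceeds unchanged because Lemmas~\ref{lem:Tlahat.phibk.commutator} and \ref{lem:Tlahat.u1u2.quasiorthogonal} never referred to $c$; their estimates rely only on $\delta$-dependent bounds for derivatives of $\phi_{b,k}$ and on the symbol-class properties of $a,q$. The main obstacle is therefore the $c_b$-uniform book-keeping in Step 2: one must verify that the amended choice of $\delta=\delta(b)$ does not shrink so fast that the bounds $\|\phi_{b,k}^{(j)}\|_\infty\ls\delta_b^{-j}$ overwhelm the commutator gain $\BigO_\delta(|b|^{-1})$ from Lemma~\ref{lem:Tlahat.phibk.commutator}. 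Under either \eqref{eq:gral.curves.dwe.straight.lines} (where $\delta$ can be taken as a small constant) or \eqref{eq:gral.curves.dwe.unbd.curves}, a direct check confirms the compatibility of the two requirements, and a final approximation argument on the core $\SchwR$ together with the inverse Fourier transform yields \eqref{eq:gral.curves.Tla}.
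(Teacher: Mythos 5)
There is a genuine gap, and it sits exactly where you locate the ``main obstacle'': your Step 2 device of a $b$-dependent cut-off parameter $\delta(b)$ is incompatible with Steps 1 and 4, and the incompatibility is forced by the hypothesis \eqref{eq:gral.curves.Tla.Phib.asm} itself. Your requirement $\delta\,\langle c_b\rangle\|(\opA-c_b)^{-1}\|=\BigO(\Phi_b)$ means $\delta\ls\langle c_b\rangle|b|^{-1}$. But the away-region estimate \eqref{eq:Tlahat.away.est.5} only yields a lower bound of order $\delta^{2}b^{2}\|u\|$ (and its derivation already needs the $\delta$-dependent positive terms to dominate errors of size $b^{-1}+c_b^{2}b^{-2}$, so $\delta$ cannot shrink faster than a fixed negative power of $b$). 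With your choice the available away bound is $\ls\langle c_b\rangle^{2}\|u\|$, whereas the local lower bound \eqref{eq:resnorm.local.dwe.gralcurve} — and the claimed final answer — is of order $2b\|(\opA-c_b)^{-1}\|^{-1}\|u\|=2\langle c_b\rangle^{2}\Phi_b^{-1}\|u\|$; the ratio of the two is $\ls\Phi_b\to0$, so in the combination step the component $u_0$ supported outside $\Omega'_{b,\pm}$ can no longer be absorbed and the two-term asymptotics \eqref{eq:gral.curves.Tla} cannot be extracted. Even the weakest alternative, taking $\delta\approx(\langle c_b\rangle\|(\opA-c_b)^{-1}\|)^{-1}$ merely to invert $\opI+\frac1{2\la_b}\widehat q(\widehat\opS_\infty-c_b)^{-1}+\widehat R_b(\widehat\opS_\infty-c_b)^{-1}$, violates the lower restriction on $\delta$ needed for \eqref{eq:Tlahat.away.est.5} along admissible curves of type \eqref{eq:gral.curves.dwe.unbd.curves} on which $\|(\opA-c_b)^{-1}\|$ grows polynomially in $b$ (e.g.\ the paper's example $a(x)=\log\langle x\rangle^{p}$ with $p$ close to $1/2$). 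So the asserted ``direct check of compatibility'' does not go through; in fact, with $c_b$ unbounded your scheme does not even establish $c_b\in\rho(\widehat\opS_b)$.

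The paper resolves this differently, and this is the ingredient missing from your proposal: $\delta$ is kept \emph{independent of} $b$ (as stated just before Step 1 of the sketch), Step 2 of the bounded case is invoked only at the shift $c=0$, giving invertibility of $\widehat\opS_b$ together with $\|\widehat\opS_b^{-1}-\widehat\opS_\infty^{-1}\|\ls b^{-1}\|\widehat\opS_\infty^{-1}\|$ from \eqref{eq:Sbhat.Sinfhat.est}, and the (possibly unbounded) shift to $c_b$ is then performed through the bounded operators $\widehat\opK_{b,\infty}:=\opI-c_b\widehat\opS_\infty^{-1}$ and $\widehat\opK_b:=\opI-c_b\widehat\opS_b^{-1}$. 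The bound $\|\widehat\opK_{b,\infty}^{-1}\|\ls\langle c_b\rangle\|(\opA-c_b)^{-1}\|$ follows from \eqref{eq:Sinfhatinv.graphnorm.est.1}, and hypothesis \eqref{eq:gral.curves.Tla.Phib.asm} is used precisely to obtain $\|c_b\widehat\opK_{b,\infty}^{-1}(\widehat\opS_b^{-1}-\widehat\opS_\infty^{-1})\|\ls\Phi_b=o(1)$, so that $\widehat\opK_b$ is invertible, $c_b\in\rho(\widehat\opS_b)$, and the identity $(\widehat\opS_b-c_b)^{-1}-(\widehat\opS_\infty-c_b)^{-1}=\widehat\opK_{b,\infty}^{-1}(\widehat\opS_b^{-1}-\widehat\opS_\infty^{-1})\widehat\opK_b^{-1}$ delivers the $\BigO(\Phi_b)$ error; the weighted bound $\|\xi(\widehat\opS_b-c_b)^{-1}\|\ls\langle c_b\rangle\|(\opA-c_b)^{-1}\|$ needed in your Step 3 is also obtained from this factorisation, not from a direct $c_b$-shifted analogue of the bounded-case argument. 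Your reading of Steps 1, 3 and 4, and of the case \eqref{eq:gral.curves.dwe.straight.lines} (where the statement essentially reduces to Theorem~\ref{thm:resolvent.Tla}), is otherwise consistent with the paper.
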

\begin{proof}[Sketch of proof]
	We shall closely follow the steps in Sub-section~\ref{ssec:proof.resolvent.Tla}, keeping the notation introduced there but omitting details whenever the arguments used earlier remain valid. As before, it is enough for us to consider the case $b > 0$, $b \to +\infty$. We note that, for families $\la_b$ satisfying \eqref{eq:gral.curves.dwe.straight.lines} or \eqref{eq:gral.curves.dwe.unbd.curves}, the choice of parameter $\delta$ in \eqref{eq:deltab.def} is independent of $b$ (see Step 2 below).
	
	\underline{Step 1}
	
	Assumption \eqref{eq:gral.curves.Tla.adj.asm} is enough to ensure that \eqref{eq:Tlahat.away.est.5} continues to hold for $\la_b$ and hence we have as $b \to +\infty$
	\begin{equation*}
		b^2 \ls \inf \left\{ \frac{\| \widehat\opT(\la_b)  u \|}{\|u\|} : \; 0 \neq u \in \Dom(\widehat\opT(\la_b)), \; \supp u \cap (\Omega'_{b,+} \cup \Omega'_{b,-}) = \emptyset \right\}.
	\end{equation*}

	\underline{Step 2}
	
	We use the notation in Proposition~\ref{prop:local.Tla}, replacing $\la$ with $\la_b$ and $c$ with $c_b$ where necessary. From \eqref{eq:U.That.Uminus1}, \eqref{eq:Sinfhat.def} and \eqref{eq:Sbhat.def}, we have
	\begin{equation}
		\label{eq:U.That.Uminus1.gralcurves}
		\widehat\opS_b - c_b = \frac1{2 \la_b}\opU_b \widetilde\opT(\la_b) \opU_b^{-1} = \widehat\opS_{\infty} - c_b + \frac1{2 \la_b} \widehat{q} + \widehat\opR_b,
	\end{equation}
	with $\widehat\opR_b(\xi)$ as defined in \eqref{eq:Rbhat.def}. Our next aim is to prove that $c_b \in \rho(\widehat\opS_b)$ as $b \to +\infty$. To do this, we argue as in Step 2 of \cite[Prop.~5.1]{ArSi-resolvent-2022}. For any $c_b > 0$, the operator $\widehat\opK_{b,\infty} := \opI - c_b \widehat\opS_{\infty}^{-1} = \widehat\opS_{\infty}^{-1} (\widehat\opS_{\infty} - c_b) = (\widehat\opS_{\infty} - c_b) \widehat\opS_{\infty}^{-1}$ is bounded and invertible and moreover by \eqref{eq:Sinfhatinv.graphnorm.est.1} (note also that either \eqref{eq:gral.curves.dwe.straight.lines} or \eqref{eq:gral.curves.dwe.unbd.curves} holds by assumption) we have for $b \to +\infty$
	\begin{equation}
		\label{eq:Kbhatiminus1.dwe.est}
		\| \widehat\opK_{b,\infty}^{-1} \| \ls \langle c_b \rangle \| (\opA - c_b)^{-1} \|.
	\end{equation}
	Recalling from Proposition~\ref{prop:local.Tla} that $0 \in \rho(\widehat\opS_b)$ for large enough $b$ and defining $\widehat\opK_b := \opI - c_b \widehat\opS_b^{-1} = \widehat\opS_b^{-1} (\widehat\opS_b - c_b) = (\widehat\opS_b - c_b) \widehat\opS_b^{-1}$, we find
	\begin{equation*}
		\widehat\opK_b = \widehat\opK_{b,\infty} (\opI - c_b \widehat\opK_{b,\infty}^{-1} (\widehat\opS_b^{-1} - \widehat\opS_\infty^{-1})).
	\end{equation*}
	Moreover, by \eqref{eq:Sbhat.Sinfhat.est} with $c = 0$, \eqref{eq:Kbhatiminus1.dwe.est} and \eqref{eq:gral.curves.Tla.Phib.asm}, we have
	\begin{equation*}
		\| c_b \widehat\opK_{b,\infty}^{-1} (\widehat\opS_b^{-1} - \widehat\opS_\infty^{-1}) \| \ls \Phi_b = o(1), \quad b \to +\infty.
	\end{equation*}
	It follows that $\widehat\opK_b$ is invertible and $\| \widehat\opK_b^{-1} \| \approx \| \widehat\opK_{b,\infty}^{-1} \|$ as $b \to +\infty$. Since $\widehat\opS_b - c_b = \widehat\opK_b \widehat\opS_b = \widehat\opS_b \widehat\opK_b$, we conclude that $c_b \in \rho(\widehat\opS_b)$ for $b \to +\infty$, as claimed. Moreover, $(\widehat\opS_b - c_b)^{-1} = \widehat\opS_b^{-1} \widehat\opK_b^{-1} = \widehat\opK_b^{-1} \widehat\opS_b^{-1}$ and, applying \eqref{eq:Sbhatinv.graphnorm.est} with $c = 0$ and \eqref{eq:Kbhatiminus1.dwe.est}, we deduce as $b \to +\infty$
	\begin{equation}
		\label{eq:xiSbhat.normres.dwe.gralcurves}
		\| \xi (\widehat\opS_b - c_b)^{-1} \| + \| \xi (\widehat\opS_b^* - c_b)^{-1} \| \ls \langle c_b \rangle \| (\opA - c_b)^{-1} \|.
	\end{equation}

	Furthermore, we have (see the argument in \cite[Eq.~(5.15)]{ArSi-resolvent-2022})
	\begin{equation*}
		((\widehat\opS_b - c_b)^{-1} - (\widehat\opS_\infty - c_b)^{-1}) \widehat\opK_b = \widehat\opK_{b,\infty}^{-1} (\widehat\opS_b^{-1} - \widehat\opS_{\infty}^{-1}).
	\end{equation*}
	Hence
	\begin{equation*}
		(\widehat\opS_b - c_b)^{-1} - (\widehat\opS_\infty - c_b)^{-1} = \widehat\opK_{b,\infty}^{-1} (\widehat\opS_b^{-1} - \widehat\opS_{\infty}^{-1}) \widehat\opK_b^{-1}, \quad b \to +\infty,
	\end{equation*}
	and therefore by \eqref{eq:Sbhat.Sinfhat.est} with $c = 0$ and \eqref{eq:Kbhatiminus1.dwe.est}, we have
	\begin{equation*}
		\| (\widehat\opS_b - c_b)^{-1} - (\widehat\opS_\infty - c_b)^{-1} \| \ls \| (\opA - c_b)^{-1} \| \Phi_b, \quad b \to +\infty.
	\end{equation*}
	It follows that
	\begin{equation}
		\label{eq:Sbhat.Sinfhat.resnorms.dwe.gralcurves}
		\| (\widehat\opS_b - c_b)^{-1} \| = \| (\opA - c_b)^{-1} \| (1 + \BigO(\Phi_b)), \quad b \to +\infty,
	\end{equation}
	and hence from \eqref{eq:U.That.Uminus1.gralcurves} and \eqref{eq:Sbhat.Sinfhat.resnorms.dwe.gralcurves} as $b \to +\infty$
	\begin{equation*}
		2 |\la_b| \| \widetilde\opT(\la_b)^{-1} \| = \| (\widehat\opS_b - c_b)^{-1} \| = \| (\opA - c_b)^{-1} \| (1 + \BigO(\Phi_b)).
	\end{equation*}
	Arguing as in the last stage of Proposition~\ref{prop:local.Tla} and noting that $|\la_b| = b \sqrt{1 + c_b^2 b^{-2}} = b (1 + \BigO(c_b^2 b^{-2}))$ and furthermore, by \eqref{eq:gral.curves.Tla.adj.asm} and \eqref{eq:gral.curves.dwe.straight.lines}-\eqref{eq:gral.curves.dwe.unbd.curves}, we have $c_b^2 b^{-2} \ls \Phi_b$, we deduce as $b \to +\infty$
	\begin{equation}
		\label{eq:resnorm.local.dwe.gralcurve}
		\begin{aligned}
			&\| (\opA - c_b)^{-1} \|^{-1} 2 b (1 - \BigO(\Phi_b))\\
			& \qquad  \qquad \le \inf \left\{ \frac{\left\| \widehat\opT(\la_b) u \right\|}{\|u\|}: \; 0 \neq u \in \Dom(\widehat\opT(\la_b)), \; \supp u \subset \Omega_{b,\pm} \right\}.
		\end{aligned}	
	\end{equation}

	\underline{Step 3}
	
	We follow the proof of Proposition~\ref{prop:lbound.Tla}, replacing $\widehat\opS_b - c$ with $\widehat\opS_b - c_b$, to find $g_b\in \Dom((\widehat\opS_b^* - c_b) (\widehat\opS_b - c_b))$ such that
	\begin{equation}
		\label{eq:varsigb.dwe.def}
		\| (\widehat\opS_b - c_b) g_b \| = \varsigma_b^{-1} = \| (\widehat\opS_b - c_b)^{-1} \|^{-1}, \quad b \to +\infty.
	\end{equation}
	Moreover, with $\varsigma_{b,\infty} := \| (\opA - c_b)^{-1} \|$, we have (see \eqref{eq:Sbhat.Sinfhat.resnorms.dwe.gralcurves})
	\begin{equation}
		\label{eq:lab.conv.dwe.gralcurve}
		\varsigma_b = \varsigma_{b,\infty} (1 + \BigO(\Phi_b)), \quad b \to +\infty.
	\end{equation}

	Recalling the cut-off functions $\psi_b$, we write
	\begin{equation}
		\label{eq:Sbhat.cb.psib.gb}
		(\widehat \opS_b - c_b) \psi_b g_b = (\widehat \opS_b - c_b) g_b + (\psi_b - 1) (\widehat \opS_b - c_b) g_b + [\widehat{a} + (2 \la)^{-1} \widehat{q}, \psi_b] g_b
	\end{equation}
	and we proceed to estimate as before the second and third terms in the right-hand side of the above equality, using also \eqref{eq:xiSbhat.normres.dwe.gralcurves}, \eqref{eq:gral.curves.dwe.straight.lines}-\eqref{eq:gral.curves.dwe.unbd.curves} and \eqref{eq:lab.conv.dwe.gralcurve}, for $b \to +\infty$
	\begin{equation*}
		\begin{aligned}
			\| (\psi_b - 1) (\widehat\opS_b - c_b) g_b \| &\ls b^{-1} \langle c_b \rangle \| (\opA - c_b)^{-1} \| \varsigma_{b}^{-2} \ls b^{-1} \langle c_b \rangle \varsigma_{b,\infty}^{-1} \ls \Phi_b \langle c_b \rangle^{-1} \varsigma_{b,\infty}^{-2}\\
			\| [\widehat{a} + (2 \la)^{-1} \widehat{q}, \psi_b] g_b \| &\ls b^{-1} (\| (\widehat\opS_b - c_b) g_b \| + \langle c_b \rangle \| g_b \|) \ls b^{-1} \langle c_b \rangle \ls \Phi_b \langle c_b \rangle^{-1} \varsigma_{b,\infty}^{-1}.
		\end{aligned}
	\end{equation*}
	Hence $\| (\widehat\opS_b - c_b) \psi_b g_b \| = \varsigma_b^{-1} + \BigO(\Phi_b \langle c_b \rangle^{-1} \varsigma_{b,\infty}^{-1}(1 + \varsigma_{b,\infty}^{-1}))$ as $b \to +\infty$. Writing $\psi_b g_b = g_b + (\psi_b - 1) g_b$, we similarly obtain $\| \psi_b g_b \| = 1 + \BigO(\Phi_b \langle c_b \rangle^{-1} \varsigma_{b,\infty}^{-1})$ as $b \to +\infty$. Thus applying \eqref{eq:lab.conv.dwe.gralcurve}, we arrive at
	\begin{equation*}
		\left| \frac{\| (\widehat \opS_b - c_b) \psi_b g_b \|}{\| \psi_b g_b \|} - \frac1{\varsigma_{b,\infty}} \right| = \BigO(\varsigma_{b,\infty}^{-1} \Phi_b), \quad b \to +\infty.
	\end{equation*}
	Recalling that $\widehat\opS_b - c_b = (2 \la_b)^{-1} \opU_b \widetilde\opT(\la_b) \opU_b^{-1}$ and letting $u_b := \opU_b^{-1} \psi_b g_b$, then $u_b \in \Dom(\widehat\opT(\la_b))$ with $\supp u_b \subset \Omega_{b,+}$ and we have
	\begin{equation*}
		\left| \frac{(2 |\la_b|)^{-1} \| \widehat\opT(\la_b) u_b \|}{\| u_b \|} - \frac1{\varsigma_{b,\infty}} \right| =  \BigO(\varsigma_{b,\infty}^{-1}\Phi_b), \quad b \to +\infty.
	\end{equation*}
	Hence
	\begin{equation}
		\label{eq:Tlabhat.low.bound.gral.curves}
		\frac{\| \widehat\opT(\la_b) u_b \|}{\| u_b \|} = \| (\opA - c_b)^{-1} \|^{-1} 2 b (1 + \BigO(\Phi_b)), \quad b \to + \infty.
	\end{equation}

	\underline{Step 4}
	
	It is straightforward to verify that estimates \eqref{eq:Tlahatphibk.commutator.norm.est} in Lemma~\ref{lem:Tlahat.phibk.commutator} and \eqref{eq:Tlahat.u1u2.quasiorth.norm.est} in Lemma~\ref{lem:Tlahat.u1u2.quasiorthogonal} continue to hold when we replace $\la$ with $\la_b$ (with $\delta$ independent of $b$). As in the proof of Theorem~\ref{thm:resolvent.Tla}, we have as $b \to +\infty$
	\begin{equation}
		\label{eq:Tlabhat.uk.norm.est}
		\begin{aligned}
			\| \widehat\opT(\la_b) u_0 \| &\le (1 + \BigO(b^{-1})) \| \widehat\opT(\la_b) u \| + \BigO(1) \| u \|,\\
			\| \widehat\opT(\la_b) (u_1 + u_2) \| &\le (1 + \BigO(b^{-1})) \| \widehat\opT(\la_b) u \| + \BigO(1) \| u \|.
		\end{aligned}
	\end{equation}
	By \eqref{eq:resnorm.local.dwe.gralcurve}, \eqref{eq:Tlahat.u1u2.quasiorth.norm.est} and \eqref{eq:Tlabhat.uk.norm.est}, we obtain for $b \to +\infty$
	\begin{equation}
		\label{eq:uk.gral.curves.dwe.local.est}
		\begin{aligned}
			2 b \| u_1 + u_2 \| &\le \| (\opA - c_b)^{-1} \| (1 + \BigO(\Phi_b)) \| \widehat\opT(\la_b) (u_1 + u_2) \|\\
			&\quad + \BigO(\| (\opA - c_b)^{-1} \| b^{-1}) (\| \widehat\opT(\la_b) u \| + \| u \|)\\
			&\le \| (\opA - c_b)^{-1} \| (1 + \BigO(\Phi_b)) \| \widehat\opT(\la_b) u \| + \BigO(\| (\opA - c_b)^{-1} \|) \| u \|.
		\end{aligned}
	\end{equation}
	By \eqref{eq:Tlahat.away.est.5} and \eqref{eq:Tlabhat.uk.norm.est}, we have as $b \to +\infty$
	\begin{equation}
		\label{eq:uk.gral.curves.dwe.away.est}
		2 b \| u_0 \| \ls b^{-1} (\| \widehat\opT(\la_b) u \| + \| u \|).
	\end{equation}
	Combining \eqref{eq:uk.gral.curves.dwe.local.est} and \eqref{eq:uk.gral.curves.dwe.away.est}, we find that as $b \to +\infty$
	\begin{equation*}
		\begin{aligned}
			2 b \| u \| &\le 2 b \left(\| u_0 \| + \| u_1 + u_2 \| \right)\\
			&\le \| (\opA - c_b)^{-1} \| (1 + \BigO(\Phi_b)) \| \widehat\opT(\la_b) u \| + \BigO(\| (\opA - c_b)^{-1} \|) \| u \|
		\end{aligned}
	\end{equation*}
	and hence
	\begin{equation*}
		2 b (1 - \BigO(\| (\opA - c_b)^{-1} \| b^{-1})) \| u \| \le \| (\opA - c_b)^{-1} \| (1 + \BigO(\Phi_b)) \| \widehat\opT(\la_b) u \|.
	\end{equation*}
	It follows
	\begin{equation*}
		\| u \| \le \| (\opA - c_b)^{-1} \| (2 b)^{-1} (1 + \BigO(\Phi_b)) \| \widehat\opT(\la_b) u \|, \quad b \to +\infty.
	\end{equation*}
	This result combined with the lower bound \eqref{eq:Tlabhat.low.bound.gral.curves} yields \eqref{eq:gral.curves.Tla}.		
\end{proof}

As an application of Proposition~\ref{prop:gral.curves.Tla} and the resolvent norm estimate for generalised Airy operators found in \cite[Thm.~4.2]{ArSi-generalised-2022}, we shall consider an example of damping function that satisfies Assumption~\ref{asm:a.q.dwe} and \cite[Asm.~3.1]{ArSi-generalised-2022} (note that the choice of $q$ plays no role in the calculations provided that Assumptions~\ref{asm:a.q.dwe}~\ref{itm:q.symbolclass}-\ref{itm:a.gt.q} are satisfied).

Let $a(x) = \log \langle x \rangle^p$, $p > 0$. We have (see \cite[Ex.~4.3(i)]{ArSi-generalised-2022})
\begin{gather*}
	\| (\opA - c_b)^{-1} \| = \sqrt{\frac{\pi}{p}} \exp\left(2 p \sqrt{\exp \left(\frac{2c_b}{p} \right) - 1} + \frac{c_b}{2 p} - p \pi\right) (1 + o(1)), \quad b \to +\infty,\\
	\implies \log\log(\| (\opA - c_b)^{-1} \|) = \frac{c_b}{p} (1 + o(1)), \quad b \to +\infty.
\end{gather*}
Using \eqref{eq:gral.curves.Tla} and substituting $\| (\opT(\la_b))^{-1} \| = \eps^{-1}$, with $\eps > 0$, we obtain the level curves
\begin{equation}
	\label{eq:a.log.lev.curves.dwe}
	c_b = p \log\log(2 b \eps^{-1}) (1 + o(1)), \quad b \to +\infty.
\end{equation}
Note that, in terms of assumptions \eqref{eq:gral.curves.Tla.adj.asm}-\eqref{eq:gral.curves.Tla.Phib.asm}, we have $c_b b^{-1} = o(1)$. On the other hand, \eqref{eq:gral.curves.Tla.Phib.asm} does not hold: if it did, by \eqref{eq:gral.curves.Tla}, we would have $\| (\opA - c_b)^{-1} \| b^{-1} \approx 1$ along the level curves and hence $\Phi_b \to +\infty$ as $b \to +\infty$. We shall therefore just put forward as a \textit{conjecture} that \eqref{eq:a.log.lev.curves.dwe} asymptotically describes the level curves for $a(x) = \log \langle x \rangle^p$.

\section{The operator $\opG$}
\label{sec:mainthmG}
\subsection{Proof of Theorem~\ref{thm:resolvent.G}}
\label{ssec:proof.resolvent.G}
\begin{proof}[Proof of Theorem~\ref{thm:resolvent.G}]
	Our first goal is to find a lower bound for $\| (\opG - \la)^{-1} \|$, with $\la := -c + i b$ defined in the statement of the theorem, as $|b| \to +\infty$.
	
	Let us take arbitrary $u_1, u_2 \in \CcR$ and let $u := (u_1, u_2)^t$. Then $u \in \Dom(\opG) \subset \cH$ and
	\begin{equation*}
		\begin{aligned}
			\| u \|_{\cH} &= (\| \Ntp u_1 \|^2 + \| q^{\frac12} u_1 \|^2 + \| u_2 \|^2)^{\frac12},\\
			\| (\opG - \la) u \|_{\cH} &= (\| \Ntp (\la u_1 - u_2) \|^2 + \| q^{\frac12} (\la u_1 - u_2) \|^2 + \| \opH_q u_1 + (2 a + \la) u_2 \|^2)^{\frac12}.
		\end{aligned}
	\end{equation*}
	Choosing $u_2 := \la u_1$ with $u_1 \ne 0$, we have $\| u \|_{\cH} \ge |b| \| u_1 \|$ and $\| (\opG - \la) u \|_{\cH} = \| \opT(\la) u_1 \|$. Noticing that, by the spectral equivalence \eqref{eq:G.Tla.spectral.equivalence}, $(\opG - \la)^{-1}$ exists if and only if $\opT(\la)^{-1}$ exists and that the existence of $\opT(\la)^{-1}$ is guaranteed by Theorem~\ref{thm:resolvent.Tla} for sufficiently large $|b|$, it follows that
	\begin{equation*}
		\frac{\| u \|_{\cH}}{\| (\opG - \la) u \|_{\cH}} \ge |b| \frac{\| u_1 \|}{\| \opT(\la) u_1 \|} \implies \| (\opG - \la)^{-1} \| \ge |b| \| \opT(\la)^{-1} \|,
	\end{equation*}
	where, for the last implication, we have used the fact that $\CcR$ is densely contained in $\Lt(\R)$. An application of Theorem~\ref{thm:resolvent.Tla} shows that $\| (\opG - \la)^{-1} \| \gs_K 1$ as $|b| \to +\infty$.
	
	In order to find an upper bound for $\| (\opG - \la)^{-1} \|$, let $0 \ne v \in \CcR \times \CcR \subset \cH$ and set $u := (\opG - \la)^{-1} v \in \Dom(\opG)$. Using \eqref{eq:Rla.def} and \eqref{eq:Tla.def}, we have
	\begin{equation*}
			u = \begin{pmatrix}
			-\la^{-1} (\opI - \opT(\la)^{-1} \opH_q) & -\opT(\la)^{-1}\\
			\opT(\la)^{-1} \opH_q & -\la \opT(\la)^{-1}
		\end{pmatrix} \begin{pmatrix}
		v_1\\
		v_2
	\end{pmatrix}
	\end{equation*}
	and therefore
	\begin{equation}
		\label{eq:G.resnorm.u1.u2.def}
		\begin{aligned}
			u_1 &= -\la^{-1} (\opI - \opT(\la)^{-1} \opH_q) v_1 - \opT(\la)^{-1} v_2\\
			u_2 &= \opT(\la)^{-1} \opH_q v_1 - \la \opT(\la)^{-1} v_2.
		\end{aligned}
	\end{equation}
	Our next task is to estimate $\| u \|_{\cH}$ and to this end we shall find upper bounds for $\| \opH_q^{\frac12} u_1 \|$ and $\| u_2 \|$, with $u_1$ and $u_2$ as in \eqref{eq:G.resnorm.u1.u2.def}.
	
	Considering firstly $u_2$ and applying \eqref{eq:Hq12.Tlaminus1.norm} and \eqref{eq:resnorm.Tla}, we obtain as $|b| \to +\infty$
	\begin{equation}
		\label{eq:G.resnorm.u2.est}
		\| u_2 \| \ls_K \| \opH_q^{\frac12} v_1 \| + \| v_2 \|.
	\end{equation}

	Turning to $\opH_q^{\frac12} u_1$ we find
	\begin{equation*}
		\opH_q^{\frac12} u_1 = -\la^{-1} (\opH_q^{\frac12} v_1 - \opH_q^{\frac12} \opT(\la)^{-1} \opH_q v_1) - \opH_q^{\frac12} \opT(\la)^{-1} v_2
	\end{equation*}
	(note that $\opH_q$ is a positive self-adjoint operator and $\Dom(\opT(\la)) \subset \Dom(\opH_q)$, hence the above operations make sense). Applying \eqref{eq:Hq12.Tlaminus1.norm}, it follows as $|b| \to +\infty$
	\begin{equation}
		\label{eq:G.resnorm.Hq12u1.est1}
		\| \opH_q^{\frac12} u_1 \| \ls_K |b|^{-1} (\| \opH_q^{\frac12} v_1 \| + \| \opH_q^{\frac12} \opT(\la)^{-1} \opH_q v_1 \|) + \| v_2 \|.
	\end{equation}
	To estimate $\| \opH_q^{\frac12} \opT(\la)^{-1} \opH_q^{\frac12} \|$, we use the second resolvent identity with $\mu > 0$
	\begin{equation*}
		\begin{aligned}
			\opT(\la)^{-1} &= \opT(\mu)^{-1} + (\mu - \la) \opT(\mu)^{-1} (2 a + \mu + \la) \opT(\la)^{-1}\\
			&= \frac{\mu}{\la} \opT(\mu)^{-1} + \mu (\mu - \la) \opT(\mu)^{-1} \opT(\la)^{-1} - \frac{\mu - \la}{\la} \opT(\mu)^{-1} \opH_q \opT(\la)^{-1}.
		\end{aligned}
	\end{equation*}
	Hence
	\begin{equation*}
		\begin{aligned}
			\opH_q^{\frac12} \opT(\la)^{-1} \opH_q^{\frac12} &= \frac{\mu}{\la} \opH_q^{\frac12} \opT(\mu)^{-1} \opH_q^{\frac12} + \mu (\mu - \la) \opH_q^{\frac12} \opT(\mu)^{-1} \opT(\la)^{-1} \opH_q^{\frac12}\\
			&\quad - \frac{\mu - \la}{\la} \opH_q^{\frac12} \opT(\mu)^{-1} \opH_q \opT(\la)^{-1} \opH_q^{\frac12}.
		\end{aligned}
	\end{equation*}
	Letting $z_{\la,\mu} := -\la / (\mu - \la) = (c^2 + c \mu + b^2 - i \mu b) / ((c + \mu)^2 + b^2)$, we deduce
	\begin{equation*}
		\begin{aligned}
			(\opH_q^{\frac12} \opT(\mu)^{-1} \opH_q^{\frac12} - z_{\la,\mu}) \opH_q^{\frac12} \opT(\la)^{-1} \opH_q^{\frac12} &= \frac{\mu}{\mu - \la} \opH_q^{\frac12} \opT(\mu)^{-1} \opH_q^{\frac12}\\
			&\quad + \mu \la \opH_q^{\frac12} \opT(\mu)^{-1} \opT(\la)^{-1} \opH_q^{\frac12}.
		\end{aligned}
	\end{equation*}
	We observe that $\opH_q^{\frac12} \opT(\mu)^{-1} \opH_q^{\frac12}$ is self-adjoint, positive and it can be boundedly extended to $\Lt(\R)$ by \eqref{eq:Hq12.Tmuminus12.norm}. Furthermore, for $b \ne 0$ we have $z_{\la,\mu} \notin \R$ and therefore the operator $\opH_q^{\frac12} \opT(\mu)^{-1} \opH_q^{\frac12} - z_{\la,\mu}$ is invertible and
	\begin{equation*}
		\| (\opH_q^{\frac12} \opT(\mu)^{-1} \opH_q^{\frac12} - z_{\la,\mu})^{-1} \| \le \frac1{|\Im z_{\la,\mu}|} = \frac{(c + \mu)^2 + b^2}{\mu |b|}.
	\end{equation*}
	Hence
	\begin{equation*}
		\begin{aligned}
			\opH_q^{\frac12} \opT(\la)^{-1} \opH_q^{\frac12} &= (\opH_q^{\frac12} \opT(\mu)^{-1} \opH_q^{\frac12} - z_{\la,\mu})^{-1} \bigg(\frac{\mu}{\mu - \la} \opH_q^{\frac12} \opT(\mu)^{-1} \opH_q^{\frac12}\\
			&\quad + \mu \la \opH_q^{\frac12} \opT(\mu)^{-1} \opT(\la)^{-1} \opH_q^{\frac12}\bigg).
		\end{aligned}
	\end{equation*}
	Choosing $\mu = |b|$ and noticing
	\begin{equation*}
		\| (\opH_q^{\frac12} \opT(|b|)^{-1} \opH_q^{\frac12} - z_{\la,|b|})^{-1} \| \le \frac{(c + |b|)^2 + b^2}{b^2} \ls 1, \quad |b| \to +\infty,
	\end{equation*}
	we obtain (applying \eqref{eq:Hq12.Tmuminus12.norm}, \eqref{eq:Hq12.Tmuminus1.norm} and \eqref{eq:Hq12.Tlaminus1.norm}) 
	\begin{equation*}
		\begin{aligned}
			\| \opH_q^{\frac12} \opT(\la)^{-1} \opH_q^{\frac12} \| &\ls \frac{|b|}{((c + |b|)^2 + b^2)^{\frac12}} \| \opH_q^{\frac12} \opT(|b|)^{-\frac12} \| \| \opT(|b|)^{-\frac12} \opH_q^{\frac12} \|\\
			&\quad + |b| (c^2 + b^2)^{\frac12} \| \opH_q^{\frac12} \opT(|b|)^{-1} \| \| \opT(\la)^{-1} \opH_q^{\frac12} \|\\
			&\ls_K 1 + b^2 |b|^{-1} \ls_K |b|, \quad |b| \to +\infty.
		\end{aligned}
	\end{equation*}
	Returning with this estimate to \eqref{eq:G.resnorm.Hq12u1.est1}, we obtain
	\begin{equation}
		\label{eq:G.resnorm.Hq12u1.est2}
		\begin{aligned}
			\| \opH_q^{\frac12} u_1 \| &\ls_K |b|^{-1} (\| \opH_q^{\frac12} v_1 \| + |b| \| \opH_q^{\frac12} v_1 \|) + \| v_2 \|\\
			&\ls_K \| \opH_q^{\frac12} v_1 \| + \| v_2 \|, \quad |b| \to +\infty.
		\end{aligned}
	\end{equation}

	Combining \eqref{eq:G.resnorm.u2.est} and \eqref{eq:G.resnorm.Hq12u1.est2}, we have
	\begin{equation*}
		\frac{\| u \|_{\cH}}{\| v \|_{\cH}} = \frac{(\| \opH_q^{\frac12} u_1 \|^2 + \| u_2 \|^2)^{\frac12}}{\| v \|_{\cH}} \ls_K 1, \quad |b| \to +\infty.
	\end{equation*}
	Since $0 \ne v$ is arbitrary and $\CcR \times \CcR$ is dense in $\cH$, it follows that $\| (\opG - \la)^{-1} \| \ls_K 1$ as $|b| \to +\infty$, which concludes the proof.
\end{proof}

\section{An example}
\label{sec:examples}
To illustrate our results, we study the operator $\opG$ associated with the damping and potential
\begin{equation}
	\label{eq:ex.a.q.def}
	a(x) = x^2, \quad q(x) = \kappa x^2, \; \kappa > 0, \quad x \in \R. 
\end{equation}
These functions satisfy Assumption~\ref{asm:a.q.dwe} and consequently we deduce that $\opG$ is m-dissipative and, in particular, $\sigma(\opG) \subset \overline \C_{-}$. Our next result provides a description of $\sigma(\opG)$ and the behaviour of the $C_0$-semigroup of contractions generated by $\opG$.

\begin{proposition}
	\label{prop:ex.spec.G}
	Let $\opG$ denote the linear operator defined in the statement of Theorem~\ref{thm:resolvent.G} with $a(x)$ and $q(x)$ determined by \eqref{eq:ex.a.q.def}. Then the following hold.
	\begin{enumerate} [\upshape (i)]
		\item \label{itm:ex.spectrum.G} The spectrum of $\opG$ is
		\begin{equation}
			\label{eq:ex.spectrum.G}
			\sigma(\opG) = (-\infty, -\kappa/2] \sqcup \{\la_n^r, \la_n^i, \overline{\la_n^i}: n \in \N_0\}
		\end{equation}
		where, for each $n \in \N_0$, the numbers $\la_n^r, \la_n^i, \overline{\la_n^i}$ are the solutions of the equation
		\begin{equation}
			\label{eq:ex.eigenv.eq}
			\la^4 - 2 (2n + 1)^2 \la - (2n + 1)^2 \kappa = 0
		\end{equation}
		satisfying $\la \in \overline \C_{-} \setminus (-\infty, -\kappa/2]$. Moreover, as $n \to +\infty$
		\begin{align}
			\label{eq:ex.lanr.asymp}\la_n^r &= - \frac{\kappa}{2} \left(1 - \frac{2^{-\frac83}}{3} \kappa^2 (2n + 1)^{-\frac43} + o(\kappa^2 (2n + 1)^{-\frac43})\right),\\
			\label{eq:ex.lani.asymp}\la_n^i &= 2^{\frac13} (2 n + 1)^{\frac23} \left(1 - \frac{2^{-\frac73}}{3} \kappa (2 n + 1)^{-\frac23} + o(\kappa (2 n + 1)^{-\frac23})\right) e^{i (\pi - \theta_n)},
		\end{align}
		with
		\begin{equation}
			\label{eq:ex.thetan.asymp}
			\theta_n = \arctan\left(\sqrt{3} \left(1 + \frac{2^{-\frac13}}{3} \kappa (2 n + 1)^{-\frac23} + o(\kappa (2 n + 1)^{-\frac23})\right)\right).
		\end{equation}
		\item \label{itm:ex.sg.G} With definitions \eqref{eq:spec.bound.def} and \eqref{eq:growth.bound.def}, we have
		\begin{equation*}
			\omega_0 = s(\opG) < 0.
		\end{equation*}
	\end{enumerate}
\end{proposition}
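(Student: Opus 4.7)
The plan for part~\ref{itm:ex.spectrum.G} is to combine the spectral equivalence \eqref{eq:G.Tla.spectral.equivalence} with an explicit spectral analysis of $\opT(\la)$ for this specific example. Since $q = \kappa a$, the hypothesis of \cite[Rmk.~3.3]{Freitas-2018-264} is satisfied with $K' = \kappa$, so the spectral equivalence extends to $\la \in \C \setminus (-\infty, -\kappa/2]$ and $\sigma(\opG) \setminus (-\infty, -\kappa/2]$ is made up of isolated eigenvalues of finite multiplicity that can only accumulate at $(-\infty, -\kappa/2]$. I would treat the essential part $(-\infty, -\kappa/2]$ separately: for $\la$ in this interval the potential $(\kappa + 2\la) x^2$ of $\opT(\la) = \Dt + (\kappa + 2\la) x^2 + \la^2$ is non-positive, and a direct construction of Weyl singular sequences built from truncated oscillating exponentials (in the spirit of \cite{Arifoski-2020-52}) places $\la$ in $\sigma_{\rm e2}(\opG)$, proving $(-\infty,-\kappa/2] \subset \sigma(\opG)$.

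For the discrete part, observe that for $\la \in \C \setminus (-\infty, -\kappa/2]$ we have $\kappa + 2 \la \in \C \setminus (-\infty, 0]$, so $\omega_\la := \sqrt{\kappa + 2\la}$ (principal branch, $\Re \omega_\la > 0$) is well defined, and the Hermite-type functions $e^{-\omega_\la x^2 / 2} H_n(\sqrt{\omega_\la}\, x)$ lie in $L^2(\R)$ and diagonalise the complex harmonic oscillator, giving
\begin{equation*}
	\sigma(\opT(\la)) = \bigl\{ (2n+1) \omega_\la + \la^2 : n \in \N_0 \bigr\}.
\end{equation*}
Hence $\la \in \sigma(\opG) \setminus (-\infty,-\kappa/2]$ iff $\la^2 + (2n+1)\omega_\la = 0$ for some $n \in \N_0$, and squaring yields the quartic \eqref{eq:ex.eigenv.eq}. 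The triple $\la_n^r, \la_n^i, \overline{\la_n^i}$ is then extracted from the four roots of that quartic by retaining those lying in $\overline\C_{-} \setminus (-\infty, -\kappa/2]$ and compatible with the principal-branch sign constraint imposed by the unsquared equation $\la^2 = -(2n+1)\omega_\la$.

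The asymptotic expansions \eqref{eq:ex.lanr.asymp}--\eqref{eq:ex.thetan.asymp} come from a dominant-balance analysis of the quartic rewritten as $\la^4 = 2 N^2 \la + N^2 \kappa$, with $N := 2n + 1 \to +\infty$. The large balance $\la^4 \sim 2 N^2 \la$ has three cube-root branches $\la \sim 2^{1/3} N^{2/3} e^{2\pi i k / 3}$, $k \in \{-1, 0, 1\}$; the pair $k = \pm 1$ lies in $\overline\C_{-}$ and a Newton-type expansion incorporating the correction from $N^2 \kappa$ produces \eqref{eq:ex.lani.asymp} together with \eqref{eq:ex.thetan.asymp}. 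The bounded balance $2\la + \kappa \sim \la^4 / N^2 \to 0$ produces $\la \sim -\kappa/2$; iteratively substituting $\la = -\kappa/2 + \la^4/(2N^2)$ into itself then yields \eqref{eq:ex.lanr.asymp}. The principal obstacle will be the careful bookkeeping of the subleading corrections, in particular matching the exact constants and the fractional exponents $N^{-4/3}$, $N^{-2/3}$, while also verifying at each step that the candidate root genuinely solves the unsquared equation rather than being an artefact of squaring.

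Part~\ref{itm:ex.sg.G} is an immediate consequence of Corollary~\ref{cor:sg.G.decay}: the assumptions of Theorem~\ref{thm:resolvent.G} have already been observed to hold, $a(x) = x^2 \neq 0$ almost everywhere, and $q(x) = \kappa a(x) \ge \kappa a(x)$ globally, so the additional hypothesis of the corollary is met with $K' = \kappa$, giving $\omega_0 = s(\opG) < 0$.
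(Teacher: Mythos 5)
Your overall architecture matches the paper's: spectral equivalence plus \cite[Rmk.~3.3]{Freitas-2018-264}, singular sequences for the interval $(-\infty,-\kappa/2]$, diagonalisation of the complex harmonic oscillator to reduce the eigenvalue problem to the quartic \eqref{eq:ex.eigenv.eq}, and Corollary~\ref{cor:sg.G.decay} for part~\ref{itm:ex.sg.G}. The genuine problem is your root-selection step. You propose to keep exactly those quartic roots in $\overline\C_-\setminus(-\infty,-\kappa/2]$ that are ``compatible with the principal-branch sign constraint imposed by the unsquared equation $\la^2=-(2n+1)\omega_\la$''. But for real $\la\in(-\kappa/2,0)$ one has $\omega_\la=\sqrt{\kappa+2\la}>0$ and $\la^2>0$, so no real root can satisfy that constraint: your filter retains only the conjugate pair $\la_n^i,\overline{\la_n^i}$ and necessarily discards the real family $\la_n^r$ that \eqref{eq:ex.spectrum.G} asserts lies in $\sigma(\opG)$. (Equivalently, your own diagonalisation gives $\opT(\la)\ge\sqrt{\kappa+2\la}+\la^2>0$ for such real $\la$, which is precisely what the constraint detects.) The paper's proof proceeds differently: it treats the quartic only as a necessary condition, discards the positive real root by m-dissipativity, excludes real roots in $(-\infty,-\kappa/2]$ by a Weber-equation/Sturm--Liouville argument showing the ODE has no $L^2$ solutions there, and retains the remaining real root in $(-\kappa/2,0)$; it does not impose your branch test. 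As written, your step ``the triple is then extracted'' does not go through, and you must either supply a separate argument that $0\in\sigma_p(\opT(\la_n^r))$ or explicitly confront the fact that your criterion and the claimed inclusion of $\la_n^r$ are in direct conflict.

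Two further points. First, your iteration $\la=-\kappa/2+\la^4/(2N^2)$, $N=2n+1$, produces a deviation $\kappa^4/(32N^2)(1+o(1))$ from $-\kappa/2$, i.e.\ a relative correction of order $\kappa^3N^{-2}$; this is not of the form displayed in \eqref{eq:ex.lanr.asymp} (order $\kappa^2N^{-4/3}$), so the assertion that the iteration ``yields \eqref{eq:ex.lanr.asymp}'' does not hold as stated and needs to be reconciled with the paper's derivation, which expands the explicit Ferrari/Cardano formulas built from \eqref{eq:yn.def}. Your dominant-balance treatment of the complex branches, by contrast, does reproduce \eqref{eq:ex.lani.asymp}--\eqref{eq:ex.thetan.asymp}. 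Second, the endpoint $\la=-\kappa/2$: your oscillatory Weyl sequences require the coefficient $2|\la+\kappa/2|$ to be strictly positive, and the spectral equivalence is not available at the endpoint either, so you should (as the paper does) build the singular sequences for $\opG-\la$, i.e.\ vectors $(\phi_n,\la\phi_n)^t$, only for $\la\in(-\infty,-\kappa/2)$ and then invoke closedness of $\sigma_{e2}(\opG)$ to capture $-\kappa/2$. Part~\ref{itm:ex.sg.G} is handled exactly as in the paper.
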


\begin{remark}
	\label{rmk:ex.la.limit}
	It is clear from the above asymptotic expansions that $\la_n^r \to -\kappa/2$ as $n \to +\infty$ (i.e. the sequence of real eigenvalues of $\opG$ accumulates on $-\kappa/2$, in line with Remark~3.3 in \cite{Freitas-2018-264}). Furthermore, $|\la_{n}^i| \to +\infty$ and $\arg(\la_{n}^i) \to 2\pi/3$ as $n \to +\infty$.
\end{remark}

\begin{proof}[Proof of Proposition~\ref{prop:ex.spec.G}]
	\ref{itm:ex.spectrum.G} Since $a(x) = \kappa^{-1} q(x)$ for every $x \in \R$, applying \cite[Thm.~3.2]{Freitas-2018-264} and \cite[Rmk.~3.3]{Freitas-2018-264}, we deduce that $\sigma_{e2}(\opG) \subset (-\infty, -\kappa/2]$. Moreover, $\sigma_{e2}(\opG)$ is closed and it is therefore sufficient to show $(-\infty, -\kappa/2) \subset \sigma_{e2}(\opG)$ to conclude $\sigma_{e2}(\opG) = (-\infty, -\kappa/2]$. In order to do this, selecting an arbitrary $\la \in (-\infty,-\kappa/2)$, we will construct a singular sequence $(\Phi_{n})_{n \in \N} \subset \Dom(\opG)$ for $\la$ adapting the proof of \cite[Thm.~4.2]{Freitas-2018-264}.
	
	Letting
	\begin{equation*}
		A(x) := -(q(x) + 2 \la a(x) + \la^2) = 2 |\la + \kappa/2| x^2 - \la^2, \quad x \in \R,
	\end{equation*}
	we note that $A \in C^1(\R)$ and
	\begin{equation}
		\label{eq:ex.A.infty}
		\lim_{x \to +\infty} A(x) = +\infty, \quad \lim_{x \to +\infty} \frac{|A'(x)|}{A(x)} = 0.
	\end{equation}
	Our main goal is to find a sequence $(\phi_n)_{n \in \N} \subset \WttR \cap \Dom(x^2)$ such that (the infimum of) $\supp \phi_n$ goes to infinity in $\Rplus$ as $n \to +\infty$ and
	\begin{equation}
		\label{eq:ex.sing.seq.conv}
		\lim_{n \to +\infty} \frac{\| \Dtp \phi_n + A \phi_n \|}{\| \Ntp \phi_n \|} = 0.
	\end{equation}
	By defining $\Phi_{n} := (\phi_n, \la \phi_n)^t$, it follows from each of the above two properties of $(\phi_n)_{n \in \N}$ that $\Phi_{n}/\|\Phi_{n}\|_{\cH} \overset{w}{\to} 0$ as $n \to +\infty$ and
	\begin{equation*}
		\frac{\| (\opG - \la) \Phi_{n} \|_{\cH}}{\| \Phi_{n} \|_{\cH}} \le \frac{\| \Dtp \phi_n + A \phi_n \|}{\| \Ntp \phi_n \|} \to 0, \quad n \to +\infty,
	\end{equation*}
	respectively.
	
	Letting $\alpha_{\la} := |\la|/\sqrt{2 |\la + \kappa/2|}$ and applying \eqref{eq:ex.A.infty}, we have
	\begin{gather*}
		A(x) > 0, \quad x > \alpha_{\la},\\
		\rho_n := \underset{t > n}{\sup} \frac{|A'(t)|}{A(t)} \to 0, \quad n > \alpha_{\la}, \quad n \to +\infty.
	\end{gather*}
	Let us define $\phi_n(x) := \varphi_n(x) \psi_{\la}(x)$, $x \in \R$, $n \in \N$, where
	\begin{align*}
		\psi_{\la}(x) &:= \exp\left(i \int_{\alpha_{\la}}^{x} (A(t))^{\frac12} \dd t\right), \quad x \ge \alpha_{\la},\\
		\varphi_n(x) &:= \rho_n^{\frac14} \varphi(\rho_n^{\frac12} x - n), \quad \varphi \in C_c^{\infty}((0,1)), \quad \| \varphi \| = 1.
	\end{align*}
	From these definitions, we immediately deduce: (i) $\rho_n^{-\frac12} n \to +\infty$, as $n \to +\infty$, and (ii) $\supp \varphi_n \subset (\rho_n^{-\frac12} n, \rho_n^{-\frac12} (n + 1))$, $n \in \N$. Furthermore
	\begin{align*}
		\| \varphi_n \|^2 &= \rho_n^{\frac12} \intR |\varphi(\rho_n^{\frac12} x - n)|^2 \dd x = 1, \quad n \in \N,\\
		\| \varphi'_n \|^2 &= \rho_n^{\frac32} \intR |\varphi'(\rho_n^{\frac12} x - n)|^2 \dd x = \| \varphi' \|^2 \rho_n = o(1), \quad n \to +\infty,\\
		\| \varphi''_n \|^2 &= \rho_n^{\frac52} \intR |\varphi''(\rho_n^{\frac12} x - n)|^2 \dd x = \| \varphi'' \|^2 \rho_n^2 = o(1), \quad n \to +\infty,
	\end{align*}
	and for $x \ge \alpha_{\la}$
	\begin{align*}
		\psi'_{\la}(x) &:= i (A(x))^{\frac12} \psi_{\la}(x),\\
		\psi''_{\la}(x) &:= i \frac12 (A(x))^{-\frac12} A'(x) \psi_{\la}(x) - A(x) \psi_{\la}(x).
	\end{align*}
	Straightforward calculations show that as $n \to +\infty$
	\begin{equation*}
		\| \Ntp \phi_n \| \ge \| \varphi_n \psi'_{\la} \| - \| \varphi'_n \| \gs \rho_n^{-\frac12} n \implies \| \Ntp \phi_n \|^{-1} = \BigO(\rho_n^{\frac12} n^{-1}),
	\end{equation*}
	and
	\begin{equation*}
		\begin{aligned}
			&\| \Dtp \phi_n + A \phi_n \| \le \| \varphi''_n \| + 2 \| A^{\frac12} \varphi'_n \| + \frac12 \| A^{-\frac12} A' \varphi_n \| \ls \rho_n + n + 1 + 1\\
			&\implies \| \Dtp \phi_n + A \phi_n \| = \BigO(n).
		\end{aligned} 
	\end{equation*}
	Therefore $\| \Dtp \phi_n + A \phi_n \| / \| \Ntp \phi_n \| = \BigO(\rho_n^{\frac12})$ which shows that \eqref{eq:ex.sing.seq.conv} holds. We conclude that $(\Phi_{n})_{n \in \N}$ is indeed a singular sequence and $\la \in \sigma_{e2}(\opG)$ as claimed.
	
	To determine the eigenvalues of $\opG$, we apply the spectral equivalence \eqref{eq:G.Tla.spectral.equivalence} (see also \cite[Thm.~3.2]{Freitas-2018-264} and \cite[Rmk.~3.3]{Freitas-2018-264}) and seek to find the set of $\la \in \overline \C_{-} \setminus (-\infty, -\kappa/2]$ such that $0 \in \sigma_p(\opT(\la))$, where
	\begin{equation*}
		\opT(\la) = \Dt + (\kappa + 2\la) x^2 + \la^2, \quad \Dom(\opT(\la)) = W^{2,2}(\R) \cap \Dom(x^2),
	\end{equation*}
	i.e. we need to find every $\la \in \overline \C_{-} \setminus (-\infty, -\kappa/2]$ such that
	\begin{equation}
		\label{eq:ex.eigenvalue.problem}
		-u_{\la}''(x) + (2 \la + \kappa) x^2 u_{\la}(x) = -\la^2 u_{\la}(x), \quad x \in \R,
	\end{equation}
	for some $0 \ne u_{\la} \in \Dom(\opT(\la))$.
	
	To this end, with $\gamma \in \C$ and $|\arg(\gamma)| < \pi$, let us consider the $\gamma$-dependent Schr\"odinger operator family
	\begin{equation*}
		\opH_{\gamma} := \Dt + \gamma x^2, \quad \Dom(\opH_{\gamma}) := W^{2,2}(\R) \cap \Dom(x^2).
	\end{equation*}
	It has been shown that $\opH_{\gamma}$ is a family of closed operators with compact resolvent (see \cite[Lem.~2.3]{Freitas-2018-264}). Furthermore, the spectrum of the rotated operator $\widetilde \opH_{\gamma} := -\gamma^{-\frac12} \Dtp + \gamma^{\frac12} x^2$ is independent of $\gamma$ (see \cite[Lem.~5]{Davies-2000-32}). Since $\sigma(\widetilde \opH_{1}) = \{2 n + 1 : n \in \N_0\}$, with corresponding eigenfunctions $\tilde{u}_n(x) = H_n(x) \exp(-x^2/2)$, $x \in \R$, $n \in \N_0$, where $H_n$ are the Hermite polynomials (see e.g. \cite[Sec.~1.3]{Helffer-2013-book}), it follows that $\sigma(\opH_{\gamma}) = \{(2 n + 1) \gamma^{\frac12} : n \in \N_0\}$ with eigenfunctions $u_n(x) = \tilde{u}_n(\gamma^{\frac14} x)$, $x \in \R$, $n \in \N_0$. To verify that $(u_n)_{n \in \N_0} \subset \Dom(\opH_{\gamma})$, consider $u_0(x) = \exp(-\gamma^{\frac12} x^2/2)$ and observe that $|\arg(\gamma)| < \pi$ and $x \in \R$ together imply that $\Re (\gamma^{\frac12}) x^2 > 0$ ($x \ne 0$). Hence we conclude that $u_0$, its derivatives and its product with any polynomial belong to $\Lt(\R)$.
	
	Therefore, setting $\gamma = 2 \la + \kappa$, the solutions of the eigenvalue problem \eqref{eq:ex.eigenvalue.problem} must satisfy the family of equations
	\begin{equation}
		\label{eq:ex.quartic}
		\la^4 = (2 n + 1)^2 (2 \la + \kappa), \quad n \in \N_0.
	\end{equation}
	Each of the above equations is a (reduced) quartic which can be solved in a standard way by Ferrari's method re-casting it as a product of two quadratics. To do this, we re-write \eqref{eq:ex.quartic} as
	\begin{equation*}
		(\la^2 + y)^2 = \left(\sqrt{2 y} \la + \frac{(2 n + 1)^2}{\sqrt{2 y}}\right)^2,
	\end{equation*}
	where $y \ne 0$ is a solution of
	\begin{equation*}
		y^3 + (2 n + 1)^2 \kappa y - \frac12 (2 n + 1)^4 = 0.
	\end{equation*}
	One can verify that this cubic has a (real) root given by Cardano's formula (with $\kappa_n := (16/27) \kappa^3 (2 n + 1)^{-2} > 0$)
	\begin{equation}
		\label{eq:yn.def}
		y_n = 2^{-\frac23} (2 n + 1)^{\frac43} \left(\left(\left(1 + \kappa_n\right)^{\frac12} + 1\right)^{\frac13} - \left(\left(1 + \kappa_n\right)^{\frac12} - 1\right)^{\frac13}\right), \quad n \in \N_0.
	\end{equation}
	Noting that $0 < y_n < 2^{-\frac13} (2 n + 1)^{\frac43}$, for every $n \in \N_0$, the solutions of \eqref{eq:ex.quartic} are (with $y_{n,\pm} := (4 (2 n + 1)^2 (2 y_n)^{-\frac32} \pm 1)^{\frac12} > 0$)
	\begin{equation*}
		\begin{aligned}
			\la_{n,\pm}^r &:= -\frac12 (2 y_n)^{\frac12} (-1 \mp y_{n,-}),\\
			\la_{n, \pm}^i &:= -\frac12 (2 y_n)^{\frac12} (1 \mp i y_{n,+}),
		\end{aligned}
	\end{equation*}
	for every $n \in \N_0$. Next we examine each of these two sets of roots in turn.
	
	Since $\opG$ is m-dissipative, any eigenvalues must lie in the semi-plane $\{\la \in \C : \Re\la \le 0\}$. Hence, appealing once more to the spectral equivalence \eqref{eq:G.Tla.spectral.equivalence}, we discard $\la_{n,+}^r$ for every $n \in \N_0$ as admissible solutions of the problem \eqref{eq:ex.eigenvalue.problem}. Denoting $\la_n^r := \la_{n,-}^r$, straightforward but somewhat lengthy calculations show that $\la_{n}^r < 0$ for every $n \in \N_0$ and we have asymptotically
	\begin{equation*}
		\la_{n}^r = -\frac{\kappa}{2} \left(1 - \frac{2^{-\frac83}}{3} \kappa^2 (2 n + 1)^{-\frac43} + o(\kappa^2 (2 n + 1)^{-\frac43})\right), \quad n \to +\infty,
	\end{equation*}
	as claimed in \eqref{eq:ex.lanr.asymp}. Let us now consider whether any $\la_{n} \in (-\infty,-\kappa/2]$ can be a solution of the eigenvalue problem. If $\la_{n} + \kappa/2 = 0$, equation \eqref{eq:ex.eigenvalue.problem} becomes
	\begin{equation*}
		-u''(x) + \la_{n}^2 u(x) = 0,
	\end{equation*}
	whose general solution $u_n(x) = C_1 \exp(|\la_{n}| x) + C_2 \exp(-|\la_{n}| x)$ does not belong to $\Lt(\R)$ unless $C_1 = C_2 = 0$. For $\la_{n} + \kappa/2 < 0$, applying the change of variable $y = \sqrt[4]{4 |2\la_{n} + \kappa|} x$ enables us to re-write \eqref{eq:ex.eigenvalue.problem} as
	\begin{equation}
		\label{eq:ex.Weber.eq}
		v''(y) + \left(\frac14 y^2 - b_n\right) v(y) = 0,
	\end{equation}
	with $b_n := \la_{n}^2 / \sqrt{4 |2 \la_{n} + \kappa|} > 0$. Note that equation \eqref{eq:ex.Weber.eq} does not have $\Lt$ solutions. This is a standard result from the theory of Sturm-Liouville operators in the positive half-line case (see e.g. \cite[Thm.~5.10]{Titchmarsh-1962-book1} or \cite[Thm.~3.5.6]{eastham1982schrodinger}). For the whole line, the problem can be reduced to the positive half-line with Dirichlet (or Neumann) boundary condition at $0$. From this analysis, we conclude that only $\la_{n} \in (-\kappa/2, 0)$ are admissible real solutions of \eqref{eq:ex.eigenvalue.problem}. Hence (noting also Remark~\ref{rmk:ex.la.limit}) we find
	\begin{equation}
		{\label{eq:ex.spec.bound.re}}
		-\frac{\kappa}{2} < \sup\{\la_{n}^r \in \sigma_{p}(\opG) : n \in \N_0\} < 0.
	\end{equation}

	On the other hand, every imaginary solution $\la_{n, \pm}^i$, $n \in \N_0$, is admissible because $\Re \la_{n, \pm}^i = -(1/2) (2 y_n)^{\frac12} < 0$ and $\Im \la_{n, \pm}^i = \pm (1/2) (2 y_n)^{\frac12} y_{n,+} \ne 0$. Moreover, it is straightforward to verify that $y_n$ is an increasing function of $n$ and hence (with $\la_n^i := \la_{n,+}^i$)
	\begin{equation}
		{\label{eq:ex.spec.bound.im}}
		-2^{-\frac23} < \sup\{\Re \la_{n}^i : n \in \N_0\} = -\frac12 (2 y_0)^{\frac12} < 0.
	\end{equation}
	Furthermore we find that for every $n \in \N_0$
	\begin{align*}
		|\la_{n}^i| &= (2 n + 1) (2 y_n)^{-\frac14} \left(1 + \frac12 (2 n + 1)^{-2} (2 y_n)^{\frac32}\right)^{\frac12},\\
		\arg(\la_{n}^i) &= \pi - \arctan\left(\frac{2 (2 n + 1) y_{n,+}}{(2 y_n)^{\frac34}}\right).
	\end{align*}
	Asymptotic expansions as $n \to +\infty$ yield
	\begin{equation*}
		\begin{aligned}
			\frac{2 n + 1}{(2 y_n)^{\frac14}} \left(1 + \frac12 \frac{(2 y_n)^{\frac32}}{(2 n + 1)^{2}}\right)^{\frac12} &= 2^{\frac13} (2 n + 1)^{\frac23} \Bigg(1 - \frac{2^{-\frac73}}{3} \kappa (2 n + 1)^{-\frac23}\\
			&\hspace{1in} + o(\kappa (2 n + 1)^{-\frac23})\Bigg),\\
			\frac{2 (2 n + 1) y_{n,+}}{(2 y_n)^{\frac34}} &= \sqrt{3} \left(1 + \frac{2^{-\frac13}}{3} \kappa (2 n + 1)^{-\frac23} + o(\kappa (2 n + 1)^{-\frac23})\right),
		\end{aligned}
	\end{equation*}
	which proves \eqref{eq:ex.lani.asymp} and \eqref{eq:ex.thetan.asymp}.
	
	\ref{itm:ex.sg.G} The claim follows immediately from Corollary~\ref{cor:sg.G.decay}.
\end{proof}

To illustrate this result, we plot the spectrum of $\opG$ corresponding to $q(x) = 10 x^2$ and $a(x) = x^2$ (see Fig.~\ref{fig:ex.spec.G.graph}). In this case, $\sigma_{e2}(\opG) = (-\infty, -5]$ and we calculate
$\sup\{\la_{n}^r : n \in \N_0\} = -1.61326$ and $\sup\{\Re\la_{n}^i : n \in \N_0\} = -0.15809$. This yields the decay rate $\omega_0(\opG) = -0.15809$ for the associated semigroup.
\begin{figure}[h]
	\includegraphics[scale=0.95]{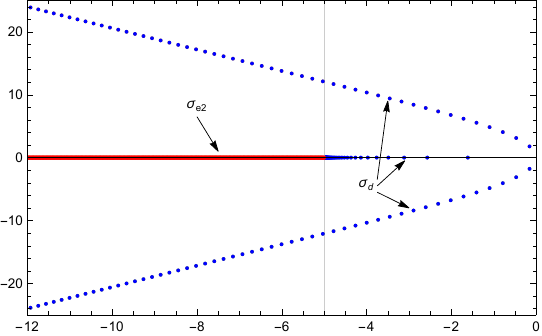}
	\caption{The spectrum of $\opG$ for $q(x) = 10 x^2$ and $a(x) = x^2$.}
	\label{fig:ex.spec.G.graph}
\end{figure}

\subsection{Further remarks}
\label{ssec:example.remarks}
From \eqref{eq:ex.spec.bound.re}, \eqref{eq:spec.bound.def} and \eqref{eq:omega0.eq.sG}, it is clear that
\begin{equation*}
	\kappa \to 0^{+} \implies \omega_0 \to 0^{-}.
\end{equation*}
It can also be verified from \eqref{eq:yn.def} that $y_0 \to 0$ as $\kappa \to +\infty$ and therefore by \eqref{eq:ex.spec.bound.im}, \eqref{eq:spec.bound.def} and \eqref{eq:omega0.eq.sG} we find
\begin{equation*}
	\kappa \to +\infty \implies \omega_0 \to 0^{-}.
\end{equation*}
Hence the exponential decay rates become progressively weaker in both limiting cases although for different reasons from a spectral point of view: when $\kappa$ approaches zero, it is the (real) essential spectrum that approaches the imaginary axis whereas for very large $\kappa$ the phenomenon is driven by the (non-real) eigenvalues.
\begin{figure}[h]
	\includegraphics[scale=0.55]{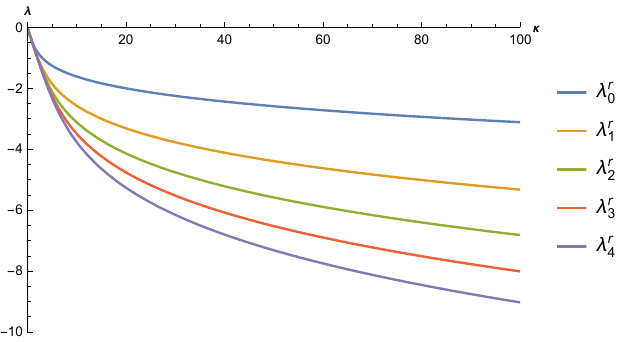}\\[\smallskipamount]
	\includegraphics[scale=0.55]{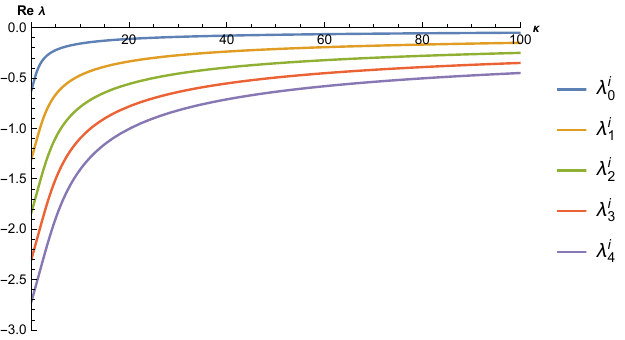}\hfill
	\includegraphics[scale=0.55]{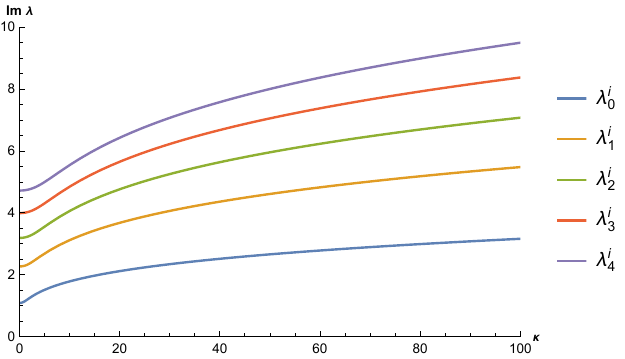}
	\caption{First 5 real (top) and first 5 imaginary (bottom) eigenvalues of $\opG$ as a function of $\kappa$.}
	\label{fig:ex.5.eig.GV}
\end{figure}

We also note that, when we remove the potential $q$ altogether (i.e. when $\kappa = 0$), we recover the spectral configuration shown in Figure~1 in \cite{Freitas-2018-264}. In terms of the behaviour of the solutions of the Cauchy problem, this of course implies that their time-decay is no longer exponential.

We illustrate the dependency on $\kappa$ of the first 5 real and the first 5 (upper-half plane) imaginary eigenvalues of $\opG$ in Fig.~\ref{fig:ex.5.eig.GV}. It highlights the fact that $\sigma(\opG)$ will touch the imaginary axis when $q(x) = 0$ as the set of real eigenvalues vanishes and the essential spectrum fills $\overline \Rminus$. Note that the non-real point spectrum is not empty and it stays away from the imaginary axis (although as noted above it will get closer to it as $\kappa$ becomes larger).

We conclude with the observation (without making specific claims) that the above analysis can also be applied to  more general even monomials $a(x) = x^{2 m}$, $q(x) = \kappa x^{2 m}$, $\kappa > 0$, $m \in \N$, similarly to the study in Proposition~6.1 in \cite{Freitas-2018-264}, with the eigenvalues of the corresponding self-adjoint anharmonic oscillator replacing $\sigma(\widetilde \opH_{1})$ above.

\bibliography{references}
\bibliographystyle{acm}	

\end{document}